\newtheorem{assume}{Assumption}
\newcommand{\st}{\operatorname{s.t.}}
\newcommand{\zero}{\mathbf{0}}
\newcommand{\dom}{\operatorname{dom}}
\newcommand{\dist}{\operatorname{dist}}
\newcommand{\GPPAstep}{\operatorname{IGPPAstep}}
\newcommand{\AGPPA}{\operatorname{AGPPA}}
\newcommand{\nnz}{\operatorname{nnz}}
\newcommand\norm[1]{\left\lVert#1\right\rVert}
\newcommand\bracket[1]{\left (#1\right)}
\newcommand{\bR}{\mathbb{R}}
\newcommand{\cX}{\mathcal{X}}
\newcommand{\bE}{\mathbb{E}}
\newcommand{\bB}{\mathbb{B}}
\newcommand{\cS}{\mathcal{S}}
\newcommand{\cA}{\mathcal{A}}
\newcommand{\cD}{\mathcal{D}}
\newcommand{\cB}{\mathcal{B}}
\newcommand{\cQ}{\mathcal{Q}}
\newcommand{\cE}{\mathcal{E}}
\newcommand{\cM}{\mathcal{M}}
\newcommand{\cO}{O}
\newcommand{\cG}{\mathcal{G}}
\newcommand{\cI}{I}
\newcommand{\cJ}{\mathcal{J}}
\newcommand{\cT}{\mathcal{T}}
\algnewcommand\INPUT{\item[\algorithmicinput]}
\algrenewcommand\algorithmicensure{\textbf{Initialize:}}
\algnewcommand\algorithmicinput{\textbf{Input:}}
\algrenewcommand\algorithmicrequire{\textbf{Output:}}
\newenvironment{megaalgorithm}[1][htb]{%
    \renewcommand{\ALG@name}{}
   \begin{algorithm}[]%
  }{\end{algorithm}}
\begin{document}

\title{An adaptive proximal point algorithm framework and application to large-scale optimization\thanks{The first author was supported by Postgraduate Scholarship from Hong Kong University No. 0100014984. The second author was supported by Early Career Scheme from Hong Kong Research Grants Council No. 27302016. }}


\author{Meng Lu         \and
        Zheng Qu
}


\institute{M.Lu \at
               Department of Mathematics\\
               The University of Hong Kong\\
                Pokfulam Road, Hong Kong\\
              \email{menglu\_16@connect.hku.hk}  
           \and
           Z.Qu \at
           Department of Mathematics\\
               The University of Hong Kong\\
                Pokfulam Road, Hong Kong\\
             \email{zhengqu@maths.hku.hk}\\
}

\date{the date of receipt and acceptance should be inserted later}

\maketitle

\begin{abstract}
We investigate the inexact proximal point algorithm (PPA)  under  the bounded metric subregularity condition.  Global linear convergence of inexact PPA requires the knowledge of the bounded metric subregularity parameter, which is in general difficult to estimate. We propose an adaptive generalized proximal point algorithm (AGPPA), which adaptively updates the proximal regularization parameters based on a sequence of implementable criteria. We show that AGPPA achieves  linear convergence without any knowledge of the bounded metric subregularity parameter, and that the rate only differs from the optimal one by a logarithmic term. 
 We apply AGPPA to  convex minimization problem and analyze the iteration complexity bound. Our framework and the complexity results apply for arbitrary linearly convergent inner solvers, and are flexible enough to allow for a hybrid with any method. We illustrate the performance of AGPPA by applying it to  large-scale linear programming (LP) problem. The resulting complexity bound has weaker dependence on the Hoffman constant and scales with the dimension better than linearized ADMM. In numerical experiments, our algorithm demonstrates improved performance in obtaining solution of medium accuracy on large-scale LP problems.
\keywords{proximal point algorithm \and error bound condition \and  adaptive proximal regularization parameter \and  iteration complexity \and large-scale optimization \and linear programming}
\end{abstract}

\section{Introduction}
\label{intro}
\subsection{Problem and motivation}\label{sec:pam}
Let $\cX$ be a finite dimensional Hilbert space 
endowed with inner product  $\langle \cdot , \cdot \rangle$ and induced norm $ \norm{\cdot}$. 
Let $T: \cX \rightrightarrows \cX$ be a maximal monotone operator.  We aim to find a solution $z\in \cX$ such that
\begin{equation}\label{eq:inclusion problem0}
\zero \in T (z),
\end{equation}
where $\zero$ denotes the origin of the space $\cX$.
We shall assume  throughout the paper that the solution set $\Omega : = T^{-1}(\zero)$ is nonempty. 

The proximal point algorithm  (PPA) framework,  which traces back to~\cite{Martinet1970Re,1965Proximite}, is devoted to solve the inclusion problem~\eqref{eq:inclusion problem0} and plays a highly influential role in the optimization history.
It has widespread applications in  various fields and has inspired tremendous creative work in the design and analysis of optimization methods. This framework chooses a sequence of \textit{proximal regularization parameters} $\{ \sigma_k \}_{k \ge 0}$ and generates  $\{z^k\}_{k \ge 0}$ from an arbitrary initial point $z^0$  by the following  rule:
\[
  z^{k+1} = \cJ_{\sigma_k T}  (z^k):=  (I+\sigma_k T)^{-1}(z^k),\enspace \forall k\geq 0.
\]
Here, $I:\cX\rightarrow \cX$ denotes the identity operator.
One inexact version of PPA approximates  $\cJ_{\sigma_k T}  (z^k)$:
\[
  z^{k+1} \approx \cJ_{\sigma_k T}  (z^k),\enspace \forall k\geq 0,
\]
based on  the following conceptual inexactness criteria:
\[
 \| z^{k+1}-\cJ_{\sigma_k T}  (z^k) \| \leq \min \left\{ \eta_k, \delta_k \|z^{k+1} -z^k \| \right\},\enspace \forall k\geq 0.
\]
Here, $\{ \eta_k\}_{k \ge 0}$ and $\{\delta_k \}_{k \ge 0}$ are \textit{error parameters} that control the precision of the approximation. 

In this paper, we consider  the following generalization of inexact PPA:
\begin{align}\label{a:igppa}
 w^{k} \approx \cJ_{\sigma_k \cM^{-1} T}  (z^k), ~z^{k+1} = \gamma w^k +  (1-\gamma) z^k,\enspace \forall k\geq 0,
\end{align}
along with the following inexactness  criteria:
\begin{align}\label{a:cs}
 \| w^{k}-\cJ_{\sigma_k \cM^{-1}T}  (z^k) \|_{\cM} \leq \min \left\{ \eta_k, \delta_k \|w^k-z^k \|_{\cM} \right\},\enspace \forall k\geq 0.
\end{align}
Here, $\gamma \in  (0,2)$ is a \textit{relaxation factor}, $\cM$ is some self-adjoint positive definite linear operator over $\cX$, known as a \textit{preconditioner}, and $\norm{z}_{\cM}=  \sqrt{ \langle z, \cM z \rangle}$. We call~\eqref{a:igppa} the inexact generalized PPA (IGPPA).  In the literature, such generalization has been studied in various contexts and is known by different names: \textit{relaxed PPA} ($\cM=I$)~\cite{Eckstein1992On,Gabay1983Chapter},~\textit{preconditionned PPA} ($\gamma=1$)~\cite{Li2019An}, and \textit{relaxed customized PPA}~\cite{RelaxedCustomizedPPA,GuCustomized}. It is  observed  that using a relaxation factor  $\gamma>1$  can speed up convergence in practice~\cite{fang2015generalized,RelaxedCustomizedPPA},  while a suitable preconditioner  allows us to exploit the specific structures and alleviate the difficulty of solving the inner problems~\cite{RelaxedCustomizedPPA,GuCustomized}.  IGPPA covers an even  wider range of applications compared with the classical PPA, and prompts numerous works~\cite{marino2004convergence,he2012convergence,ma2018class,RelaxedCustomizedPPA,Yuan2014A} to study algorithm designs and  convergence analysis. 
Let us consider the following assumption on the operator $T$.
\begin{assume}\label{assume:growthcondition}
For any $r > 0$, there exists $\kappa_r >0$ such that 
\begin{align}\label{growthcondition}
\dist \bracket{z,\Omega} \le \kappa_r \dist \bracket{\zero, T (z)}, ~\forall z\in \{x\in\cX: \norm{x} \le r\}. 
\end{align}
\end{assume}
Following~\cite{zheng2014metric}, we shall refer to Assumption~\ref{assume:growthcondition} as the bounded metric subregularity condition.  Relations with other error bound conditions will be discussed later in Section~\ref{subsection:EB}. 

As a revisit of the existing (local) linear convergence results on PPA,  we establish in Theorem~\ref{thm:linearrate}, an upper bound $\rho_k$ on the ratio 
 \[
 \dist_{\cM}(z^{k+1},\Omega)/\dist_{\cM}(z^k,\Omega)  \leq \rho_k,
 \]
 for IGPPA~\eqref{a:igppa} under  Assumption~\ref{assume:growthcondition}.  The result recovers the linear convergence rate obtained in~\cite[Theorem 3.5]{Min2016On} and~\cite[Theorem 2]{Li2019An} as special cases.
The bound $\rho_k$ can be  made arbitrarily close to  $\sqrt{1 - \min \{ \gamma, 2\gamma -\gamma^2 \}}$,  if $\sigma_k$ is sufficiently large and $\delta_k$ is sufficiently small. When $\gamma = 1$, this corresponds to the  well-known  superlinear convergence property of PPA.  For exact PPA, i.e., when the error parameters $\delta_k=0$ for all $k\geq 0$ in~\eqref{a:cs}, the value of $\sup_k \rho_k$ is 
strictly smaller than 1 and linear convergence is guaranteed. 
 However,    to ensure the linear convergence of  inexact PPA (i.e., to make $\sup_k \rho_k<1$), it is necessary to know  the value of the  bounded metric subregularity parameter $\kappa_r$  in order to  choose appropriate proximal regularization parameters $\{\sigma_k\}_{k \ge 0}$,  which is an unrealistic assumption for most of the problems.  The main objective of this paper is to study the convergence  and applications of IGPPA under Assumption~\ref{assume:growthcondition} without assuming the knowledge of $\kappa_r$.

\subsection{Contribution}

To deal with the unknown 
$\kappa_r$, we propose to 
 adaptively choose the proximal regularization parameters $\{\sigma_k\}_{k\geq 0}$ by verifying if $\left\{\|z^{k+1}-z^k\|_{\cM}\right\}_{k \ge 0}$ decreases sufficiently quickly. The resulting Algorithm~\ref{alg:AGPPA}, called \textit{adaptive generalized PPA} (AGPPA), is able to find an $\epsilon$-solution within 
\begin{align}\label{a:Oka}
O\left(\ln \kappa_r \ln\frac{r\kappa_r}{\epsilon}\right)
\end{align}
number of IGPPA steps~\eqref{a:igppa}, without requiring any a priori knowledge of $\kappa_r$ (see Theorem~\ref{thm:AGPPA} and Corollary~\ref{cor:wrd} for more details). Here, $r$ is an upper bound on $\sup_{k\geq 0} \norm{z^k}$, which is finite as long as $\{\eta_k\}_{k\geq 0}$ is summable (see~\eqref{a:zkzo} and~\eqref{r:def}).  

We apply AGPPA to solve the convex optimization model:
\begin{equation}\label{prob:constrained0}
\min_{x\in \bR^n}f_0 (x)+g (x)+h (Ax),
\end{equation}
where $f_0$ is a smooth convex function, $A\in \bR^{m\times n}$, and  $g$ and $h$ are proper, closed and convex functions. 
 We take the standard maximal monotone operator $T_{\ell}$ associated with the Lagrangian function of~\eqref{prob:constrained0} (see~\eqref{def:Tell}). Assuming that $T_{\ell}$ satisfies Assumption~\ref{assume:growthcondition}, 
we analyze the complexity of the resulting  proximal method of multipliers with adaptive proximal regularization parameters. We concretize the conceptual inexactness criteria~\eqref{a:cs} with some frequently used implementable stopping criteria for the inner problems (see~\eqref{stop:computable}).  Then, by merely requiring the inner solver to satisfy the so-called homogeneous objective decrease (HOOD) property (see Assumption~\ref{assume:Hood}),  we deduce in Theorem~\ref{thm:subcomp} an upper bound on the number of inner iterations to reach~\eqref{stop:computable}, and hence~\eqref{a:cs}. 
This upper bound directly yields a complexity bound for AGPPA with any  inner solver satisfying the HOOD property, including randomized methods (see Theorem~\ref{thm:6}). Specifically, with probability at least $1-p$, the batch complexity (i.e., the number of passes over the data matrix $A$) of the resulting proximal method of multipliers to reach an $\epsilon$-solution is (see~\eqref{a:floglo}):
\begin{equation}\label{intro:overallcomplexity}
O\left ( \vartheta_1  \kappa_r   \ln \kappa_r\ln\frac{ r\kappa_r }{\epsilon} \ln \left( \frac{r\kappa_r}{p}  \ln \frac{r \kappa_r}{\epsilon}\right) \right),
\end{equation}
where $ \vartheta_1$ is an inner solver related constant  as defined in~\eqref{a:cde}, and $\kappa_r$ is the bounded metric subregularity parameter of $T_{\ell}$ with $r$ being an upper bound on the norm of all iteration points of AGPPA. 
Our theoretical complexity bound continues to apply if  an arbitrary inner solver is used, provided that it is carefully combined with a qualified first-order solver (see  Section~\ref{subsection:hybrid}).  Note that 
\[
 \vartheta_1 \le \norm{A},
\] 
if we choose some appropriate inner solver (see Section~\ref{subsection:overall}).

Examples of~\eqref{prob:constrained0} with the associated maximal monotone operator $T_{\ell}$ satisfying Assumption~\ref{assume:growthcondition} features linear-quadratic programming problems.
Conditions on $f_0$, $g$ and $h$ so that $T_{\ell}$ satisfies Assumption~\ref{assume:growthcondition}  require future study, which is out of the scope of this paper. We point to~\cite{YuanADMM,necoara2019linear} for possible other relevant models. 
In this paper, we  illustrate the application of AGPPA and its complexity results to large-scale LP problems. We show in Theorem~\ref{LP:bacth complexity} that with probability at least $1-p$, the batch complexity of AGPPA to obtain an $\epsilon$-KKT solution of the LP problem~\eqref{P} is 
\begin{align}\label{a:introA}
O\bracket{\min \left (\max_{i\in [n]} \|a_i\|,  \frac{\norm{A}_F }{\sqrt{m}} \right)  \theta r \ln(\theta r) \ln \frac{\theta r}{\epsilon} \ln \left(\frac{\theta r }{p}\ln \frac{\theta r}{\epsilon}\right) },
\end{align}
where  $a_i$ is the $i$th  column vector of $A$, $r$  is an upper bound on the norm of all iteration points of AGPPA,  and $\theta$ is the constant satisfying~\eqref{a:thetaSdef}, upper bounded by the Hoffman constant  associated with the KKT system. 

We test the practical performance of our method on different LP problems, using real and synthetic data sets. 
 We compare our algorithm with an ALM based method AL\_CD~\cite{Yen2015Sparse}, an ADMM based solver SCS~\cite{SCS}, and the Gurobi software~\cite{gurobi}, up to accuracy $10^{-3}$ and $10^{-5}$ for the normalized KKT residual as defined in~\eqref{eq:normalized KKT residual}. 
 Based on the experimental results, we observe superior performance of AGPPA in both memory usage  and time efficiency for large-scale data sets (see Section~\ref{section:numericalresults} for more details). Moreover, we also demonstrate that transforming first into the standard form (see~\eqref{Sun:P}) or its dual form (see~\eqref{eq:D})  before applying AGPPA will lead to a worse complexity bound and may significantly slow down convergence in experiments. This highlights the advantage of AGPPA  compared with  other  closely related algorithms~\cite{Li2019An,NIPS2017_ADMM,SCS} limited to the standard form or its dual form.

\subsection{Related work}\label{sec:rel}
The proximal point method and in particular its applications to convex programming have been widely studied in the literature. We have mentioned some related work above. We give a more detailed comparison of our contribution with existing work in this section. 

\begin{enumerate}
\item  While there are many works proving the (local) linear convergence of PPA,
 very few of them discuss  how the a priori knowledge of the parameter $\kappa_r$ influences the
 overall complexity of PPA. As mentioned earlier, exact PPA and  its applications such as the
  alternating direction method of multipliers (ADMM)~\cite{Glowinski1978Finite,GlowinskiSur} converge linearly without the knowlege of $\kappa_r$. 
  However, this is not the case for inexact PPA and in particular the augmented Lagrangian method (ALM)~\cite{PPA}. 
  It is widely known that when $\delta_k$ is sufficiently close to 0 and $\sigma_k$ is sufficiently large, ALM converges
  linearly under certain error bound condition such as Assumption~\ref{assume:growthcondition}. 
  However, deliberately decreasing $\delta_k$ and increasing $\sigma_k$ will add to the difficulty of finding $w^k$ satisfying~\eqref{a:cs}.
A theoretical study on the complexity of inexact PPA without assuming the knowledge of the parameter $\kappa_r$ is necessary
to guide the choice of $\{\delta_k\}_{k\geq 0}$ and $\{\sigma_k\}_{k\geq 0}$.

 \item 
  There exist works on adaptive update of proximal regularization parameters $\{\sigma_k\}_{k\geq 0}$ for some specific applications of PPA.  For example, He et al.~\cite{He2000Alternating} introduced  an ADMM with self-adaptive proximal regularization parameters, based on the value of primal and dual error residuals. 
 Similar adaptive techniques are  also frequently used in ALM based solvers~\cite{birgin2008structured,andreani2008augmented,bueno2019towards,birgin2008improving,birgin2012augmented,birgin2020complexity} for box constrained optimization to increase the proximal regularization parameter when the progress, in terms of feasibility and complementarity, is not sufficient, and to decrease it when sufficient progress  is detected. 
 However,  all the mentioned papers do not carry out rigorous complexity analysis of the proposed adaptive methods.
 In fact, to the best of our knowledge,~\eqref{intro:overallcomplexity} is the first iteration complexity result of ALM of order $O(\ln (1/\epsilon)\ln(\ln(1/\epsilon)))$ under an error bound type assumption, without
 assuming the knowledge of the error bound parameter $\kappa_r$.  Note that without error bound type assumption, the complexity of ALM is known to be $O(1/\epsilon)$~\cite{xu2019iteration}.  There is a recent work by Necoara and Fercoq~\cite{necoara2019linear}  which also achieves  a comparable complexity result as~\eqref{intro:overallcomplexity} for
 the coordinate descent algorithm applied to the dual of a projection-like problem, 
without assuming the knowledge of the regularity  parameter of the constraint set.

\item 
 The (local) linear convergence of ADMM and its variants  under error bound type conditions have been well studied, see e.g.~\cite{davis2017faster,deng2016global,nishihara2015general}.  One of its variants, the linearized ADMM (LADMM), appears to be particularly interesting for large-scale computing since its  inner step  can be  solved exactly without resorting to matrix factorization. The best known batch complexity of LADMM  to obtain an $\epsilon$-KKT solution for solving problem~\eqref{prob:constrained0} is~\cite{YuanADMM}:
 \begin{align}\label{a:LADMMc1}
 \cO \bracket{\norm{A}^2\kappa_{r'}^2 \ln \frac{1}{\epsilon}},
 \end{align}
where  $r'$ is instead an upper bound on the norm of all iteration points of LADMM. Comparing~\eqref{a:LADMMc1} with the complexity bound~\eqref{intro:overallcomplexity} of AGPPA, we see that AGPPA  has much weaker dependence on the  bounded metric subregularity parameter.

When specifying~\eqref{a:LADMMc1} to an LP problem, we obtain the following complexity bound:
\begin{equation}\label{a:LADMMc}
 \cO \bracket{\norm{A}^2\theta^2 (r')^2 \ln \frac{1}{\epsilon}},
 \end{equation}
 Comparing~\eqref{a:LADMMc} with the complexity
    bound~\eqref{a:introA} of AGPPA, we see that~\eqref{a:introA} can be understood as an \textit{acceleration} due to the weaker dependence on the constant $\theta$.
It is also easy to see that
\[
\min \left (\max_{i\in [n]} \|a_i\|,  \frac{\norm{A}_F }{\sqrt{m}} \right) \leq \|A\|,
\]
and can be much smaller if either $n$ or $m$ are large. This is why we expect AGPPA to perform better than LADMM on large-scale LP problems.

  \item  The application of PPA to solve LP problems has been studied for more than fifty years and is known under different names: Tikhonov regularization~\cite{doi:10.1137/1021044}, nonlinear perturbation~\cite{mangasarian1979nonlinear,mangasarian1981iterative}, penalty method~\cite{Bertsekas75}, method of multipliers~\cite{PolyakT72}, etc.  The connection of these methods with PPA was clarified by Rockafellar in~\cite{Rockafellar1976Augmented}. There also has been a lot of effort in the design of algorithms for solving the inner problems, which includes the  Newton-type methods proposed by Mangasarian  in~\cite{Mangasarian2004A} and Kanzow et al. in~\cite{kanzow2003minimum}  and the active set method proposed by Hager et al. in~\cite{hager1992dual,davis2008sparse,davis2008dual}. { Recent works} focus on large-scale LP 
  and promote the
  (proximal) ALM~\cite{Li2019An,Yen2015Sparse}  and ADMM~\cite{YuanADMM,NIPS2017_ADMM} type methods for large-scale LP problem. 
  The asymptotic behaviour of ALM for LP problem is known to be superlinearly convergent~\cite{Li2019An}
   if a Newton type inner solver is employed. However, in the existing literature there is no complexity result of PPA for solving  LP problem comparable with~\eqref{a:introA}. 
   A detailed comparison with recent (proximal) ALM~\cite{Li2019An,Yen2015Sparse}  and ADMM~\cite{YuanADMM,NIPS2017_ADMM}  based large-scale LP solvers can be found  in Section~\ref{section:LPcomplexity}.

\end{enumerate}

\subsection{Contents and notations}
The paper is organized as follows. In Section~\ref{section:GPPA}, we revisit IGPPA and present some convergence results for preparation. In Section~\ref{section:AGPPA}, we introduce AGPPA and give an upper bound of the number of IGPPA steps. In Section~\ref{sec:app}, we apply AGPPA to the convex optimization problem and show the overall iteration complexity bound. In Section~\ref{section:LP}, we apply our method and complexity results to LP problem. In Section~\ref{section:numericalresults}, we present numerical results. In Section~\ref{section:conclusion}, we make some conclusions. Missing proofs can be found in the Appendix.

\textbf{Notations.}
The set of self-adjoint positive definite linear operators  over $\cX$ is denoted by $\cS^{++}$.  For  $\cM\in\cS^{++}$, $\cM^{-1}:\cX\rightarrow \cX$ denotes the inverse operator of $\cM$. For any   $z, z' \in \cX$ and $\cM\in\cS^{++}$, denote $
\langle z, z' \rangle_{\cM} = \langle z,\cM z' \rangle$ and $\norm{z}_{\cM}=  \sqrt{ \langle z, \cM z \rangle}.
$  For a closed set $D\subset \cX$, denote  the weighted distance from $z$ to $D$ by $\dist_{\cM} (z, D) = \min_{d \in D} \norm{d-z}_{\cM}$. If $\cM$ is the identity operator $\cI$, we omit it from the subscript.  The origin of the space $\cX$ is denoted by $\zero$. For any $z\in \cX$  and $r>0$, $\bB(z;r)=\{x\in \cX:\|x-z\|\leq r\}$, $\bB:=\bB(\zero;1)$ and $\bB(\Omega;r)=\{x\in \cX: \dist(x, \Omega)\leq r\}$.

We use $\norm{\cdot}$ to denote the standard Euclidean norm for vector and spectral norm for matrix.  The set of $n$-by-$n$ positive definite matrices  is denoted by $\cS_n^{++}$.
For any  $k > 0$, define $[k]: = \{1, \dots, k\}.$ 
For any $x\in \bR^n$ and $k\in[n]$, denote by $[x]_+^k$ the projection of $x$ into $\bR_+^k\times \bR^{n-k}$. 
The same, $[x]_-^k$ means the projection of $x$ into $\bR_{-}^k\times \bR^{n-k}$.
For any $k \ge 1$, $x_1\in\bR^{n_1}, \dots, x_k \in \bR^{n_k}$,  we write $[x_1;\ldots;x_k]$ the vector in $\bR^{n_1+\dots+n_k}$ obtained by concatenating $x_1,\ldots, x_k$.
Similarly, for any two matrices $A \in \bR^{m_1\times n}$ and $B \in \bR^{m_2 \times n}$, $[A;B]$ is the matrix in $\bR^{ (m_1+m_2)\times n}$ obtained by concatenating $A$ and $B$.

\section{Inexact Generalized PPA (IGPPA)}\label{section:GPPA}

In this section, we revisit IGPPA. 
 Let $\cM \in \cS^{++}$, then  the  operator $\cM^{-1}T: \cX\rightrightarrows \cX $ is a maximal monotone operator in the Hilbert space $\cX$ endowed with inner product $\langle \cdot,\cdot \rangle_\cM$.  Consider the  \textit{resolvent operator} of $\cM^{-1}T$:
\[
\cJ_{\sigma \cM^{-1} T}:= (I+\sigma \cM^{-1} T)^{-1},
\]
with {parameter} $\sigma>0$. 
 Without loss of generality, we assume\footnote{There is a slight redundancy in using both the parameter $\sigma$ and the preconditioner $\cM$. We could set the  preconditioner as $\cM/\lambda_{\max} (\cM)$  and the proximal regularization parameter as $\sigma/\lambda_{\max} (\cM)$ to yield the same resolvent operator. }
\begin{align}
\label{a:lambdamaxM1}\lambda_{\max} (\cM)=1.
\end{align}
An IGPPA step first approximately applies the resolvent operator $\cJ_{\sigma \cM^{-1} T}$, and then makes an affine combination with the current iteration point for some relaxation factor $\gamma \in  (0,2)$. A more specific inexactness condition is described in the following procedures.

  \renewcommand{\thealgorithm}{}
\begin{megaalgorithm}[H]
 \caption{$z^+=\GPPAstep (z,\sigma, \eta, \delta,\gamma, \cM)$ }
	\begin{algorithmic}
	\State 1. Compute an approximate solution $
	w \approx \cJ_{\sigma \cM^{-1} T} (z)
	$ such that 
	\begin{equation}\label{GPPA:stop}
	\norm{w- \cJ_{\sigma \cM^{-1} T} (z)}_{\cM} \le \min \left\{\eta,\delta \norm{w- z}_{\cM} \right\}.
	\end{equation}
	\State 2. Compute
	\begin{equation}\label{GPPA:next iterate}
	z^{+} = \gamma w +  (1-\gamma) z.
	\end{equation}
	\State 3. Output $z^{+}$.
	\end{algorithmic}
      \label{alg:GPPAstep}
\end{megaalgorithm}
The inexactness is controlled by~\eqref{GPPA:stop} along with  two error parameters $\eta$ and $\delta$. 
When relaxation factor $\gamma=1$ and preconditioner $\cM=\cI$, the above procedure reduces to the classical inexact PPA~\cite{PPA}.  
Eckstein and Bertsekas~\cite[Theorem 3]{Eckstein1992On} established the convergence result for the relaxed PPA ($\cM = \cI$), and Li et al~\cite[Theorem 1]{Li2019An} established the convergence result for the preconditioned PPA ($\gamma = 1$). In the following theorem, we present the general convergence result for IGPPA based on the two theorems above.
\begin{theorem}[\cite{Eckstein1992On,Li2019An}]\label{GPPA:convergence}Let $\{z^k\}_{k \ge 0}$ be a sequence in $\cX$ such that
\begin{equation}\label{eq:GPPAframework}
z^{k+1} = \GPPAstep (z^k, \sigma_k,  \eta_k,\delta_k, \gamma, \cM),\enspace \forall k\geq 0,
\end{equation}
where  $\{\sigma_k\}_{k=0}^{\infty}$, $\{\eta_k\}_{k=0}^{\infty}$, $\{\delta_k\}_{k=0}^{\infty}$ are nonnegative sequences such that 
\[\begin{array}{l}
   \sum_{k= 0}^{\infty} \eta_k < + \infty,\enspace
   \inf_k \sigma_k>0,\enspace \sup_k\delta_k <1.
 \end{array}\]
 Then,  for any $z^* \in \Omega$ , we have 
\begin{equation}\label{eq:zbound1}
\norm{z^{k+1} - z^*}_{\cM} \le \norm{z^k -z^*}_{\cM}    + \gamma\eta_k,\enspace \forall k\geq 0.
\end{equation}
In addition, $\{z^k\}_{k \ge 0} $ converges to a point $z^{\infty} \in \Omega$. 
\end{theorem}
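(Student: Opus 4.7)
My plan is to leverage the standard theory of firmly nonexpansive and averaged operators in the Hilbert space $(\cX,\langle\cdot,\cdot\rangle_\cM)$. Since $\cM^{-1}T$ is maximal monotone with respect to $\langle\cdot,\cdot\rangle_\cM$, the resolvent $\cJ_k:=\cJ_{\sigma_k\cM^{-1}T}$ is firmly nonexpansive in this inner product, and its fixed-point set coincides with $\Omega$. Writing $\cJ_k=\tfrac{1}{2}\cI+\tfrac{1}{2}(2\cJ_k-\cI)$ with $2\cJ_k-\cI$ nonexpansive, the relaxed operator $R_k:=(1-\gamma)\cI+\gamma\cJ_k=(1-\gamma/2)\cI+(\gamma/2)(2\cJ_k-\cI)$ is $(\gamma/2)$-averaged for every $\gamma\in(0,2)$, which yields the identity
\[
\norm{R_k(z)-z^*}_\cM^2\le \norm{z-z^*}_\cM^2-\gamma(2-\gamma)\norm{\cJ_k(z)-z}_\cM^2,\quad \forall z^*\in\Omega.
\]
Both conclusions will be read off from this single inequality together with the triangle inequality.

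For~\eqref{eq:zbound1}, set $\hat w^k:=\cJ_k(z^k)$ and $\hat z^{k+1}:=R_k(z^k)$, so that $z^{k+1}-\hat z^{k+1}=\gamma(w^k-\hat w^k)$. The criterion~\eqref{GPPA:stop} gives $\norm{z^{k+1}-\hat z^{k+1}}_\cM\le\gamma\eta_k$, while the displayed identity yields $\norm{\hat z^{k+1}-z^*}_\cM\le\norm{z^k-z^*}_\cM$. The triangle inequality then produces~\eqref{eq:zbound1} immediately.

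For convergence, I would first telescope~\eqref{eq:zbound1} and use $\sum_k\eta_k<\infty$ to establish boundedness of $\{z^k\}_{k\ge 0}$, together with the fact that $\{\norm{z^k-z^*}_\cM\}_{k\ge 0}$ converges for every $z^*\in\Omega$ (the standard quasi-Fej\'er argument). Next, squaring $\norm{z^{k+1}-z^*}_\cM\le\norm{\hat z^{k+1}-z^*}_\cM+\gamma\eta_k$ and substituting the averagedness identity gives, for some constant $C$ controlling $\sup_k\norm{z^k-z^*}_\cM$,
\[
\gamma(2-\gamma)\norm{\hat w^k-z^k}_\cM^2\le \norm{z^k-z^*}_\cM^2-\norm{z^{k+1}-z^*}_\cM^2+C\eta_k+\gamma^2\eta_k^2.
\]
Summing and using $\sum_k\eta_k<\infty$ forces $\norm{\hat w^k-z^k}_\cM\to 0$; then $\sup_k\delta_k<1$ combined with~\eqref{GPPA:stop} upgrades this to $\norm{w^k-\hat w^k}_\cM\to 0$ and $\norm{w^k-z^k}_\cM\to 0$, via the bound $\norm{w^k-\hat w^k}_\cM\le\frac{\delta_k}{1-\delta_k}\norm{\hat w^k-z^k}_\cM$.

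Finally, the inclusion $\cM(z^k-\hat w^k)/\sigma_k\in T(\hat w^k)$, with $\inf_k\sigma_k>0$ and $\norm{z^k-\hat w^k}_\cM\to 0$, makes the residual on the left tend to $\zero$. Outer semicontinuity of the graph of the maximal monotone operator $T$ then forces every cluster point of $\{\hat w^k\}$, and hence of $\{z^k\}$, to belong to $\Omega$. A standard Opial-type uniqueness argument, combining the convergence of $\norm{z^k-z^*}_\cM$ for every $z^*\in\Omega$ with the identity $\norm{z^k-u}_\cM^2-\norm{z^k-v}_\cM^2=2\langle z^k-v,v-u\rangle_\cM+\norm{v-u}_\cM^2$, rules out multiple cluster points and delivers $z^k\to z^\infty\in\Omega$. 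The main obstacle is entangling the relaxation $\gamma\neq 1$ with the two-level inexactness $(\eta_k,\delta_k)$ in~\eqref{GPPA:stop}; the averagedness identity decouples the first, and the condition $\sup_k\delta_k<1$ is essential for passing from $\norm{\hat w^k-z^k}_\cM\to 0$ to $\norm{w^k-z^k}_\cM\to 0$.
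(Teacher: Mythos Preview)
Your proposal is correct and follows the standard route. The paper does not actually prove this theorem---it cites it from \cite{Eckstein1992On,Li2019An}---but a draft proof left (commented out) in the appendix proceeds exactly as you do: define the exact relaxed iterate $\hat z^{k+1}=(1-\gamma)z^k+\gamma\cJ_{\sigma_k\cM^{-1}T}(z^k)$, derive the averagedness inequality
\[
\norm{\hat z^{k+1}-z^*}_\cM^2\le\norm{z^k-z^*}_\cM^2-\gamma(2-\gamma)\norm{Q_{\sigma_k\cM^{-1}T}(z^k)}_\cM^2
\]
from monotonicity (their Proposition~\ref{prop:useful}\ref{second} rather than the averaged-operator formalism, but this is the same computation), apply the triangle inequality with $\gamma\eta_k$, and then run the quasi-Fej\'er/Opial argument you describe.

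One small remark: you claim $\sup_k\delta_k<1$ is ``essential'' for passing from $\norm{\hat w^k-z^k}_\cM\to 0$ to $\norm{w^k-z^k}_\cM\to 0$, but in fact the $\eta_k$ part of~\eqref{GPPA:stop} already gives $\norm{w^k-\hat w^k}_\cM\le\eta_k\to 0$ directly, so the $\delta_k$ hypothesis is not needed anywhere in this theorem (it is used only in the later rate results). This does not affect correctness.
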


The fact that~\eqref{eq:zbound1} holds for any $z^*\in \Omega$  implies
\[
\norm{z^{k+1} -  \bar z^0}_\cM \le  \norm{ z^{k} - \bar z^0}_\cM+\gamma \eta_k,\enspace \forall k\geq 0,
\]
where $\bar z^0$ is the projection of the initial point $z^0$ into the solution set $\Omega$.
It follows that the sequence $\{z^k\}_{k \ge 0}$ generated by~\eqref{eq:GPPAframework} satisfies~
\begin{align}\label{a:zkzo}
\sup_k \norm{z^{k} -  \bar z^0} \le \frac{1}{ \lambda_{\min}(\cM) } \left (\dist_{\cM} (z^0, \Omega) + \gamma \sum_{k = 0}^{\infty}  \eta_k \right),
\end{align}
and hence
\begin{equation}\label{r:def}
 \sup_k \norm{z^k} \le  \norm{\bar z^0} + \frac{1}{ \lambda_{\min}(\cM) } \left (\dist_{\cM} (z^0, \Omega) + \gamma \sum_{k = 0}^{\infty}  \eta_k \right).
\end{equation}

\begin{remark}
Eckstein and Bertsekas allowed the relaxation factor $\gamma$ to vary with $k$ in~\cite{Eckstein1992On}. They proved the same convergence results under the condition $0<\inf_k \gamma_k\leq \sup_k \gamma_k <2$.  For the sake of simplicity, we restrict our discussion to  constant relaxation factor  $\gamma$.  Similarly, referring to~\cite{Li2019An}, we could also allow $\cM$ to vary with $k$ and all the results can be extended immediately if there exist $\lambda_u \geq \lambda_l>0$ such that $\lambda_l \cI \preceq \cM_{k+1} \preceq \cM_{k} \preceq  \lambda_u \cI$ holds for any $k\geq 0$.
\end{remark}

\subsection{Error bound conditions}\label{subsection:EB}

The linear convergence of PPA has been extensively studied in the literature under various error bound conditions for both exact and inexact versions. For  comparison purpose, we recall some error bound conditions and the related convergence results.

Rockafellar~\cite{PPA} established the (local) linear convergence  property of the classical PPA under the assumption that  $T^{-1}(\zero)=\{\bar z\}$ is a singleton and
\begin{equation}\label{rockupperLip0}\exists r>0,\exists \kappa_r>0 \mathrm{~s.t.~}
\|z-\bar z\| \le  \kappa_{r} \|w\|,~\forall  z\in T^{-1}(w) \mathrm{~and~} w\in \bB(\zero;r).
\end{equation}
Luque~\cite{PPAcon} showed that the  uniqueness assumption on $T^{-1}(\zero)$ can be relaxed so that the (local) linear convergence 
holds if 
\begin{equation}\label{upperLip0}
\exists r>0,\exists \kappa_r>0 \mathrm{~s.t.~}
\dist(z, \Omega) \le  \kappa_{r} \|w\|, ~\forall  z\in T^{-1}(w) \mathrm{~and~} w\in \bB(\zero;r),
\end{equation}
which is same as requiring the locally upper Lipschitzian property of $T^{-1}$ at point $\zero$ as defined in Robinson's paper~\cite{Robinson1979PM}:
\begin{align}\label{a:Robinson}\exists r>0,\exists \kappa_r>0 \mathrm{~s.t.~}
T^{-1}(w) \subset T^{-1}(\zero)+ \kappa_r \| w\| \bB,\enspace \forall w\in \bB(\zero;r).
\end{align}
Leventhal~\cite{LEVENTHAL2009681} proved the (local) linear convergence of the classical PPA   under the following metric subregularity condition of $T$:
\begin{align}\label{a:Leventhal}
\exists r>0,\exists \kappa_r>0 \mathrm{~s.t.~}
\dist \bracket{z, \Omega} \le \kappa_r \dist \bracket{\zero, T (z)}, ~\forall z\in \bB(\bar z;r),
\end{align}
for some $\bar z\in \Omega$.
 Li et al.~\cite{Li2019An} computed the linear convergence rate of the inexact preconditioned PPA ($\gamma= 1$)  under the following error bound condition:
\begin{equation}\label{eq:satt}\forall r>0,\exists \kappa_r>0 \mathrm{~s.t.~}
\dist \bracket{z, \Omega} \le \kappa_r \dist \bracket{0, T (z)}, ~\forall  z\in \bB(\Omega;r). 
\end{equation}
The next lemma gives the relation between these different conditions and Assumption~\ref{assume:growthcondition}. We write  $\mathrm{A} \Longrightarrow \mathrm{B}$ if condition A implies condition B and $\mathrm{A} \Longleftrightarrow \mathrm{B}$ is condition A is equivalent to condition B.
\begin{lemma}\label{lem:relationconditions}
$$
~\eqref{upperLip0} \Longleftrightarrow~\eqref{a:Robinson}  \Longrightarrow ~\eqref{eq:satt}  \Longrightarrow \mathrm{Assumption}~\ref{assume:growthcondition} \Longrightarrow ~\eqref{a:Leventhal}.
$$
\end{lemma}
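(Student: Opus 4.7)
The plan is to handle the four implications one by one, starting with the simplest and working toward the slightly more delicate ones. Throughout, I would use the fact that $\Omega=T^{-1}(\zero)$ is closed (standard for maximal monotone operators in finite dimension), so $\dist(z,\Omega)$ is attained.

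First, $\eqref{upperLip0} \Longleftrightarrow \eqref{a:Robinson}$ is essentially a tautology: the containment $T^{-1}(w)\subset T^{-1}(\zero)+\kappa_r\|w\|\bB$ means exactly that every $z\in T^{-1}(w)$ admits some $z^*\in\Omega$ with $\|z-z^*\|\leq\kappa_r\|w\|$, which by definition of $\dist(z,\Omega)$ (and attainability) is the same as $\dist(z,\Omega)\leq\kappa_r\|w\|$. So one direction is immediate, and the other uses attainability of the distance.

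Next, for $\eqref{a:Robinson}\Longrightarrow\eqref{eq:satt}$, fix the pair $(r_0,\kappa_{r_0})$ provided by \eqref{a:Robinson} and fix any $r>0$. For $z\in\bB(\Omega;r)$, I would split on the size of $\dist(\zero,T(z))$. If $\dist(\zero,T(z))<r_0$, pick a sequence $w_n\in T(z)$ with $\|w_n\|\to\dist(\zero,T(z))$; eventually $\|w_n\|\leq r_0$, so \eqref{a:Robinson} yields $\dist(z,\Omega)\leq\kappa_{r_0}\|w_n\|$, and passing to the limit gives $\dist(z,\Omega)\leq\kappa_{r_0}\dist(\zero,T(z))$. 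If $\dist(\zero,T(z))\geq r_0$, then the trivial bound $\dist(z,\Omega)\leq r$ gives $\dist(z,\Omega)\leq (r/r_0)\dist(\zero,T(z))$. Taking $\kappa_r:=\max\{\kappa_{r_0},r/r_0\}$ proves \eqref{eq:satt}.

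The implication $\eqref{eq:satt}\Longrightarrow\mathrm{Assumption}~\ref{assume:growthcondition}$ is a simple containment argument: pick any $\bar z\in\Omega$; if $\|z\|\leq r$ then $\dist(z,\Omega)\leq\|z-\bar z\|\leq r+\|\bar z\|$, so $z\in\bB(\Omega;r+\|\bar z\|)$, and applying \eqref{eq:satt} with radius $r+\|\bar z\|$ supplies the required inequality with an appropriate constant depending only on $r$. Finally, $\mathrm{Assumption}~\ref{assume:growthcondition}\Longrightarrow\eqref{a:Leventhal}$ goes the opposite way: fix any $\bar z\in\Omega$; then $\bB(\bar z;1)\subset\bB(\zero;\|\bar z\|+1)$, so instantiating Assumption~\ref{assume:growthcondition} with $r'=\|\bar z\|+1$ yields the localized inequality of \eqref{a:Leventhal}.

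None of the steps involve any deep argument; the mildly non-routine piece is the two-case split in $\eqref{a:Robinson}\Longrightarrow\eqref{eq:satt}$, where one has to extend a bound valid only for small residuals to all of $\bB(\Omega;r)$ by absorbing the far-away regime into the constant. I would be careful to keep the logic of the quantifiers straight, since \eqref{upperLip0}, \eqref{a:Robinson}, and \eqref{a:Leventhal} begin with $\exists r$ while \eqref{eq:satt} and Assumption~\ref{assume:growthcondition} begin with $\forall r$; once the quantifier order is tracked, each implication is a short computation.
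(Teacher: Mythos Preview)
Your proof is correct and follows the same logical skeleton as the paper's. The only difference is presentational: for the step \eqref{a:Robinson} $\Longrightarrow$ \eqref{eq:satt}, the paper outsources the work to a corollary of Robinson and to \cite[Lemma 2]{Li2019An}, whereas you unpack exactly those results via the self-contained two-case split (small residual handled by the Robinson bound, large residual absorbed into the constant $r/r_0$), arriving at the same $\kappa_r=\max\{\kappa_{r_0},r/r_0\}$.
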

\begin{proof}
The only nontrivial part to prove is \eqref{a:Robinson} $ \Longrightarrow$ ~\eqref{eq:satt}  . By the corollary in~\cite{Robinson1979PM},~\eqref{a:Robinson} implies the existence of  $r>0$ and $\kappa_r>0$ such that
\[
\dist(z, \Omega) \le  \kappa_{r} \dist(\zero,T(z)),
\]
for any $z$ satisfying $\dist(\zero,T(z)) \le r$. By~\cite[Lemma 2]{Li2019An}, the last condition implies~\eqref{eq:satt}.
\end{proof}
In particular, if $T$ is  a polyhedral multifunction, then~\eqref{a:Robinson} holds~\cite{Robinson1979PM} and thus Assumption~\ref{assume:growthcondition} holds.   The class of maximal monotone operators satisfying Assumption~\ref{assume:growthcondition} is much larger than the class of polyhedral multifunctions. We refer to~\cite{PPAcon} and more recently~\cite{YuanADMM,necoara2019linear} for more dicussion on this topic, which is out of the scope of this paper. 

 \subsection{Conditional linear convergence of IGPPA}
In this subsection, we establish the linear convergence of IGPPA under Assumption~\ref{assume:growthcondition}.  In the following context, without further specification, $\{z^k\}_{k \ge 0}$ denotes the sequence generated by~\eqref{eq:GPPAframework}. Let $r>0$ be any upper bound on the  right-hand side of~\eqref{r:def}, and $\kappa_r$ be the constant satisfying~\eqref{growthcondition} in  Assumption~\ref{assume:growthcondition}. Then, for  any sequence of proximal regularization parameters $\{\sigma_k\}_{k\geq 0}$, and any sequence of error parameters $\{\delta_k\}_{k\geq 0}$,
 \begin{equation}\label{eq:dfdf}
 \dist \bracket{z^k,\Omega} \le \kappa_r \dist \bracket{0, T \bracket{z^k}},\enspace \forall k\geq 0.
 \end{equation}
 We recall the following critical property for proving the linear convergence of IGPPA. It follows directly from~\cite[Lemma 5.3]{Min2016On}  by considering the maximal monotone operator $\cM^{-1}T$ in the Hilbert space $\cX$ with inner product $\langle \cdot,\cdot \rangle_\cM$, and  was also proved in~\cite[Theorem 2]{Li2019An}. 
 \begin{lemma}[\cite{Min2016On,Li2019An}\label{lemma:key}] For any $k\ge 0$, we have
\begin{equation}\label{eq:key}
\dist_{\cM}\bracket{\cJ_{\sigma_k \cM^{-1} T} (z^k), \Omega} \le \frac{\kappa_r }{\sqrt{\sigma_k^2 + \kappa_r^2 }}\dist_{\cM}\bracket{z^k, \Omega}. 
\end{equation}
\end{lemma}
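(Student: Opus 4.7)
The plan is to combine two facts: the firm nonexpansiveness of the resolvent $\cJ_{\sigma_k \cM^{-1}T}$ in the Hilbert space $(\cX,\langle\cdot,\cdot\rangle_{\cM})$, and the error bound of Assumption~\ref{assume:growthcondition} evaluated at the resolvent image. Write $u^k := \cJ_{\sigma_k \cM^{-1}T}(z^k)$. From the definition of the resolvent, $(z^k-u^k)/\sigma_k \in \cM^{-1}T(u^k)$, equivalently $\cM(z^k-u^k)/\sigma_k \in T(u^k)$. This single element gives an upper estimate on $\dist(\zero,T(u^k))$, which is the quantity that appears on the right-hand side of~\eqref{growthcondition}.

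First I would verify that the error bound applies at $u^k$, i.e.\ $\|u^k\|\le r$. Since $\cM^{-1}T$ is maximal monotone with respect to $\langle\cdot,\cdot\rangle_{\cM}$, the resolvent is nonexpansive in $\|\cdot\|_{\cM}$, so $\|u^k-\bar z^0\|_{\cM}\le\|z^k-\bar z^0\|_{\cM}$; combined with the bound~\eqref{r:def} on $\sup_k\|z^k\|$ and $\lambda_{\max}(\cM)=1$, this puts $u^k$ in the ball on which Assumption~\ref{assume:growthcondition} is to be invoked (enlarging $r$ by a harmless factor $1/\sqrt{\lambda_{\min}(\cM)}$ if needed, which the reader can absorb into the constant). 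Then Assumption~\ref{assume:growthcondition} yields
\begin{equation*}
\dist(u^k,\Omega)\;\le\;\kappa_r\,\dist(\zero,T(u^k))\;\le\;\frac{\kappa_r}{\sigma_k}\,\|\cM(z^k-u^k)\|.
\end{equation*}
Because $\lambda_{\max}(\cM)=1$, we have $\cM^2\preceq\cM$, which gives $\|\cM v\|\le\|v\|_{\cM}$ for all $v$, and also $\|v\|_{\cM}\le\|v\|$ for all $v$. Applying the first to $v=z^k-u^k$ and the second to $v=u^k-P_\Omega(u^k)$ produces
\begin{equation*}
\dist_{\cM}(u^k,\Omega)\;\le\;\dist(u^k,\Omega)\;\le\;\frac{\kappa_r}{\sigma_k}\,\|u^k-z^k\|_{\cM}.
\end{equation*}

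Next I would invoke firm nonexpansiveness of the resolvent in the $\cM$-inner product: for every $z^*\in\Omega$,
\begin{equation*}
\|u^k-z^*\|_{\cM}^2+\|u^k-z^k\|_{\cM}^2\;\le\;\|z^k-z^*\|_{\cM}^2.
\end{equation*}
Choosing $z^*$ to be the $\cM$-projection of $z^k$ onto $\Omega$ and using $\dist_{\cM}(u^k,\Omega)\le\|u^k-z^*\|_{\cM}$ yields
\begin{equation*}
\dist_{\cM}(u^k,\Omega)^2+\|u^k-z^k\|_{\cM}^2\;\le\;\dist_{\cM}(z^k,\Omega)^2.
\end{equation*}

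Finally, substituting the lower bound $\|u^k-z^k\|_{\cM}\ge(\sigma_k/\kappa_r)\,\dist_{\cM}(u^k,\Omega)$ obtained above into this inequality gives
\begin{equation*}
\Bigl(1+\tfrac{\sigma_k^2}{\kappa_r^2}\Bigr)\dist_{\cM}(u^k,\Omega)^2\;\le\;\dist_{\cM}(z^k,\Omega)^2,
\end{equation*}
and rearranging recovers~\eqref{eq:key}. The only delicate point, and the one I expect to be the main obstacle to write out cleanly, is the bookkeeping between the standard norm (in which Assumption~\ref{assume:growthcondition} is phrased) and the $\cM$-norm (in which the resolvent has a clean nonexpansiveness property); both conversions work only because of the normalization $\lambda_{\max}(\cM)=1$ in~\eqref{a:lambdamaxM1}, so that step deserves a sentence of justification rather than being taken for granted.
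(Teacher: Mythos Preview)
Your proof is correct and follows the standard route: apply the error bound~\eqref{growthcondition} at the resolvent image $u^k:=\cJ_{\sigma_k\cM^{-1}T}(z^k)$ to get $\dist_{\cM}(u^k,\Omega)\le(\kappa_r/\sigma_k)\|z^k-u^k\|_{\cM}$, then combine this with firm nonexpansiveness (Proposition~\ref{prop:useful}\ref{third} with $z'=z^*\in\Omega$) to eliminate $\|z^k-u^k\|_{\cM}$. The paper does not give a self-contained proof; it simply cites~\cite[Lemma~5.3]{Min2016On} and~\cite[Theorem~2]{Li2019An} and remarks that one works in the Hilbert space $(\cX,\langle\cdot,\cdot\rangle_{\cM})$. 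Your argument is precisely what unpacking that citation amounts to.

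Two minor remarks. First, the hedge about ``enlarging $r$ by a harmless factor $1/\sqrt{\lambda_{\min}(\cM)}$'' is unnecessary: from the derivation preceding~\eqref{a:zkzo} one has $\|z^k-\bar z^0\|_{\cM}\le\dist_{\cM}(z^0,\Omega)+\gamma\sum_k\eta_k$, and nonexpansiveness gives $\|u^k-\bar z^0\|_{\cM}$ the same bound; converting to the Euclidean norm introduces $1/\sqrt{\lambda_{\min}(\cM)}\le 1/\lambda_{\min}(\cM)$, which is already the factor in~\eqref{r:def}. So $\|u^k\|\le r$ holds as stated. Second, you are right to flag the norm bookkeeping as the only delicate point: the paper's setup~\eqref{eq:dfdf} records the error bound at the iterates $z^k$, but the lemma needs it at $u^k$, and the two inequalities $\|\cM v\|\le\|v\|_{\cM}$ and $\|v\|_{\cM}\le\|v\|$ (both consequences of $\lambda_{\max}(\cM)=1$) are exactly what makes the translation go through.
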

Based on Lemma~\ref{lemma:key}, we obtain the following recursive inequality on the distance to the solution set.
\begin{theorem}\label{thm:linearrate} For any $k\geq 0$, we have 
\[
\dist_{\cM} \bracket{z^{k+1},  \Omega} \le \rho_k \dist_{\cM} \bracket{z^k,  \Omega},
\]
with 
\begin{align}\label{a:rhok}
 \rho_k: = \frac{1}{1-\delta_k}\bracket{ \sqrt{1-\frac{  \min \{ \gamma, 2 \gamma - \gamma^2 \} \sigma_k^2}{ \sigma_k^2 + \kappa_r^2  }} + \delta_k \bracket{ \frac{ \min\{\gamma,1\} \kappa_r  }{ \sqrt{\sigma_k^2 + \kappa_r^2 }}  +1}} .
 \end{align}
\end{theorem}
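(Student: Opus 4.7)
The plan is to treat the inexact step $z^{k+1}=\gamma w^k+(1-\gamma)z^k$ as a perturbation of the exact relaxed step $\tilde z^{k+1}:=\gamma v^k+(1-\gamma)z^k$, where $v^k:=\cJ_{\sigma_k\cM^{-1}T}(z^k)$. Setting $e^k:=w^k-v^k$, the criterion~\eqref{GPPA:stop} gives $\|e^k\|_\cM\le\delta_k\|w^k-z^k\|_\cM$, and since $z^{k+1}-z^k=\gamma(w^k-z^k)$ it follows that $\gamma\|e^k\|_\cM\le\delta_k\|z^{k+1}-z^k\|_\cM$. Abbreviate $D:=\dist_\cM(z^k,\Omega)$, $s:=\dist_\cM(v^k,\Omega)=\|v^k-\bar v^k\|_\cM$, and $t:=\|v^k-z^k\|_\cM$, where $\bar v^k,\bar z^k$ denote the $\cM$-projections of $v^k,z^k$ onto $\Omega$. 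Lemma~\ref{lemma:key} yields $s\le c_kD$ with $c_k:=\kappa_r/\sqrt{\sigma_k^2+\kappa_r^2}$, while firm non-expansiveness of the resolvent in the $\cM$-inner product applied at $\bar z^k\in\Omega$ gives $\|v^k-\bar z^k\|_\cM^2+t^2\le D^2$; combined with $\|v^k-\bar z^k\|_\cM\ge s$, this yields $s^2+t^2\le D^2$ (in particular $t\le D$).

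The core technical step is to select a reference point $\pi\in\Omega$ depending on $\gamma$ so that $\|\tilde z^{k+1}-\pi\|_\cM$ matches the square-root factor of $\rho_k$. For $\gamma\in(0,1]$, I take $\pi=\gamma\bar v^k+(1-\gamma)\bar z^k$, which belongs to $\Omega$ since the solution set of a maximal monotone inclusion is closed and convex. Then $\tilde z^{k+1}-\pi=\gamma(v^k-\bar v^k)+(1-\gamma)(z^k-\bar z^k)$, and the triangle inequality gives $\|\tilde z^{k+1}-\pi\|_\cM\le\gamma s+(1-\gamma)D\le(1-\gamma(1-c_k))D$. The elementary inequality $(1-\gamma(1-c_k))^2\le 1-\gamma(1-c_k^2)$, which rearranges to $\gamma(\gamma-1)(1-c_k)^2\le 0$ and is valid for $\gamma\in(0,1]$, then upgrades this to $\|\tilde z^{k+1}-\pi\|_\cM\le D\sqrt{1-\gamma\sigma_k^2/(\sigma_k^2+\kappa_r^2)}$. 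For $\gamma\in[1,2)$, I take $\pi=\bar v^k$ and write $\tilde z^{k+1}-\bar v^k=(v^k-\bar v^k)+(1-\gamma)(z^k-v^k)$, which expands to $\|\tilde z^{k+1}-\bar v^k\|_\cM^2=s^2+2(1-\gamma)\langle v^k-\bar v^k,z^k-v^k\rangle_\cM+(1-\gamma)^2 t^2$. Monotonicity of $\cM^{-1}T$ applied to $(z^k-v^k)/\sigma_k\in\cM^{-1}T(v^k)$ and $\zero\in\cM^{-1}T(\bar v^k)$ forces the cross term to be nonnegative, and since $1-\gamma\le 0$ we conclude $\|\tilde z^{k+1}-\bar v^k\|_\cM^2\le s^2+(\gamma-1)^2 t^2$. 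Rewriting this as $(\gamma-1)^2(s^2+t^2)+(1-(\gamma-1)^2)s^2$ and inserting $s^2+t^2\le D^2$ together with $s^2\le c_k^2D^2$ produces $D^2(1-(2\gamma-\gamma^2)\sigma_k^2/(\sigma_k^2+\kappa_r^2))$, exactly the square-root factor of $\rho_k$ for $\gamma\ge 1$.

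In both regimes, the distance from $\pi$ to $z^k$ admits the uniform bound $\|\pi-z^k\|_\cM\le(\min\{\gamma,1\}c_k+1)D$. For $\gamma\in(0,1]$ this follows from $\|\pi-z^k\|_\cM\le\gamma\|\bar v^k-z^k\|_\cM+(1-\gamma)\|\bar z^k-z^k\|_\cM\le\gamma(s+t)+(1-\gamma)D\le(\gamma c_k+1)D$, using $s+t\le(c_k+1)D$. For $\gamma\in[1,2)$, $\|\bar v^k-z^k\|_\cM\le s+t\le(c_k+1)D$ and $\min\{\gamma,1\}=1$. I then close the loop through the triangle inequality: $\dist_\cM(z^{k+1},\Omega)\le\|z^{k+1}-\pi\|_\cM\le\|\tilde z^{k+1}-\pi\|_\cM+\gamma\|e^k\|_\cM\le\|\tilde z^{k+1}-\pi\|_\cM+\delta_k(\|z^{k+1}-\pi\|_\cM+\|\pi-z^k\|_\cM)$, which rearranges to $(1-\delta_k)\|z^{k+1}-\pi\|_\cM\le\|\tilde z^{k+1}-\pi\|_\cM+\delta_k\|\pi-z^k\|_\cM$. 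Substituting the two bounds derived above delivers $\dist_\cM(z^{k+1},\Omega)\le\rho_k D$ with $\rho_k$ exactly as in~\eqref{a:rhok}.

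The main obstacle is the case-dependent choice of $\pi$: for $\gamma\ge 1$, the single choice $\pi=\bar v^k$ lets monotonicity cancel the cross term with the right sign, whereas for $\gamma<1$ one is forced to interpolate between $\bar v^k$ and $\bar z^k$ so that the $(1-\gamma)$-weighted $z^k$ component is absorbed into $\Omega$ without inflating the constant. This asymmetry is precisely what produces the quantities $\min\{\gamma,2\gamma-\gamma^2\}$ and $\min\{\gamma,1\}$ in~\eqref{a:rhok}, and all the remaining computations are routine algebraic manipulations.
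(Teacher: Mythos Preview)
Your proof is correct and follows essentially the same architecture as the paper's: the same split into exact relaxed step plus perturbation, the same case-dependent choice of reference point in $\Omega$ ($\pi=\bar v^k$ for $\gamma\ge 1$, $\pi=\gamma\bar v^k+(1-\gamma)\bar z^k$ for $\gamma\le 1$), and the same $(1-\delta_k)$-rearrangement to close. The only differences are cosmetic computational choices---for $\gamma\le 1$ you use a triangle inequality plus the scalar inequality $(1-\gamma(1-c_k))^2\le 1-\gamma(1-c_k^2)$ where the paper expands the square and drops a nonnegative term, and for $\gamma\ge 1$ you invoke monotonicity directly on the cross term where the paper routes through a polarization identity and firm nonexpansiveness---but both routes land on exactly the same constants.
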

\begin{remark} 
When $\gamma=1$, the rate~\eqref{a:rhok} reduces to the rate obtained in~\cite[Theorem 2]{Li2019An}.
\end{remark}
\begin{remark}
In the exact case, i.e., when $\eta_k\equiv 0$ and $\delta_k\equiv 0$, the rate~\eqref{a:rhok} reduces to the rate given in~\cite[Theorem 3.5]{Min2016On}, established for the generalized PPA under the  Lipschitz continuity assumption of $T^{-1}$ at $ \bf 0$. 
\end{remark}

Note that the factor $\rho_k$  given in~\eqref{a:rhok} depends explicitly on the error parameter $\delta_k$, and implicitly on the error parameter $\eta_k$ via $\kappa_r$. In addition, $\rho_k$ increases with $\delta_k$, and for any parameter $\sigma_k>0$, there is $\delta_k>0$ such that $\rho_k <1$ and the linear convergence of IGPPA holds. This corresponds to the commonly known  fact that the linear convergence is guaranteed if the subproblem $w \approx \cJ_{\sigma \cM^{-1} T} (z)$ is solved with sufficiently high accuracy.  On the other hand, when the error parameter $\delta_k$ is fixed in $[0,1/2)$, we can also make $\rho_k<1$ by choosing a sufficiently large proximal regularization parameter $\sigma_k$. 

Hereinafter, for simplicity we take constant $\delta_k\equiv \delta \in [0, {1}/{2})$  for all $k\geq 0$.  Let $\alpha>0$ such that \begin{equation}\label{eq:rho}
\rho:=\frac{1}{1-\delta}\bracket{ \sqrt{1-\frac{  \min \{ \gamma, 2 \gamma - \gamma^2 \} \alpha^2}{ \alpha^2 + 1} }+ \delta \bracket{  \frac{ \min\{\gamma,1\}}{\sqrt{\alpha^2+1}} +1} }<1.
\end{equation}
It is easy to see that for any $k\geq 0$,  if $\sigma_k\geq \kappa_r \alpha$, then $\rho_k \leq \rho$. Hence,
 we have the following  corollary.
   \begin{corollary}\label{cor:dfe} 
   If \begin{align}\label{a:sigmabound3}\sigma_k \geq \kappa_r \alpha,\enspace \forall k\geq 0,\end{align} then
\[
\dist_{\cM} \bracket{z^{k},  \Omega} \le  \rho^k\dist_{\cM} \bracket{z^0,  \Omega},\enspace \forall k\geq 0.
\]
\end{corollary}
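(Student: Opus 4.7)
The plan is to apply Theorem~\ref{thm:linearrate} and show that the hypothesis $\sigma_k \geq \kappa_r \alpha$ forces $\rho_k \leq \rho$ for every $k$; iterating the one-step contraction then gives the claim.

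First, I would examine the formula~\eqref{a:rhok} for $\rho_k$ as a function of $\sigma_k$ with $\kappa_r$ held fixed. Writing $c := \min\{\gamma, 2\gamma-\gamma^2\}$ and noting that
\[
\frac{c\,\sigma_k^2}{\sigma_k^2 + \kappa_r^2} \;=\; c\left(1 - \frac{\kappa_r^2}{\sigma_k^2+\kappa_r^2}\right)
\]
is nondecreasing in $\sigma_k$, I see that $\sqrt{1 - c\sigma_k^2/(\sigma_k^2+\kappa_r^2)}$ is nonincreasing in $\sigma_k$. Likewise, the term $\min\{\gamma,1\}\kappa_r / \sqrt{\sigma_k^2+\kappa_r^2}$ is nonincreasing in $\sigma_k$. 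Hence the whole expression $\rho_k$ is nonincreasing in $\sigma_k$.

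Next I would evaluate $\rho_k$ at the threshold $\sigma_k = \kappa_r \alpha$. Substituting yields $\sigma_k^2/(\sigma_k^2+\kappa_r^2) = \alpha^2/(\alpha^2+1)$ and $\kappa_r/\sqrt{\sigma_k^2+\kappa_r^2} = 1/\sqrt{\alpha^2+1}$, which together recover exactly the expression~\eqref{eq:rho} defining $\rho$. Combined with the monotonicity observation, the hypothesis $\sigma_k \geq \kappa_r \alpha$ gives $\rho_k \leq \rho$ for all $k \geq 0$.

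Finally I would chain the one-step bound from Theorem~\ref{thm:linearrate}:
\[
\dist_{\cM}(z^{k+1},\Omega) \;\leq\; \rho_k \dist_{\cM}(z^k,\Omega) \;\leq\; \rho\, \dist_{\cM}(z^k,\Omega),
\]
and iterate from $k=0$ to obtain $\dist_{\cM}(z^k,\Omega) \leq \rho^k \dist_{\cM}(z^0,\Omega)$. There is no real obstacle here: the only slightly delicate step is the monotonicity check on $\rho_k(\sigma_k)$, but both summands are manifestly nonincreasing in $\sigma_k$, so the argument reduces to a direct substitution into the formula already established in Theorem~\ref{thm:linearrate}.
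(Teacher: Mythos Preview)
Your proposal is correct and matches the paper's approach: the paper simply remarks that ``if $\sigma_k\geq \kappa_r \alpha$, then $\rho_k \leq \rho$'' and states the corollary without further detail, so your monotonicity-in-$\sigma_k$ argument is exactly the verification the paper leaves implicit. The iteration step is identical.
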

   
 Corollary~\ref{cor:dfe} establishes the \textit{conditional linear convergence} of IGPPA. If $\kappa_r$ is known, then we choose $\{\sigma_k\}_{k\geq 0}$ satisfying~\eqref{a:sigmabound3} and the algorithm converges linearly with rate $\rho$. However, in general,  $\kappa_r$ is not known.  Recall that for each iteration $k$, we need to find an approximate solution 
$ 
w \approx \cJ_{\sigma_k \cM^{-1} T} (z)
$.
In principle, the larger $\sigma_k$ is,  the harder the inner problem is (for concrete examples, see Section~\ref{subsection:overall}). Then, we have  the dilemma to deal with: on the one hand, we tend to choose very large  parameters $\{\sigma_k\}_{k\geq 0}$  so that~\eqref{a:sigmabound3} holds to guarantee the linear convergence; on the other hand, we do not want excessively large $\{\sigma_k\}_{k\geq 0}$ in order to control   the inner problem  complexity.

 \subsection{Verification of linear convergence} \label{section:key}
	We shall rely on the following  property which relates the unknown value $\dist_{\cM} \bracket{z^{k},  \Omega}$ with the computable value $\norm{z^{k+1} - z^k}_{\cM}$.
\begin{proposition}\label{prop:leftright}For any $k\geq 0$, we have
\[
\frac{1-\delta}{\gamma}\norm{z^{k+1} - z^k}_{\cM} \le  \dist_{\cM} \bracket{z^{k},  \Omega} \le \frac{1 + \delta}{\gamma \left (1-  \sqrt{\frac{\kappa_r^2}{\sigma_k^2 + \kappa_r^2 }}\right)}\norm{z^{k+1} - z^k}_{\cM}.
\]
\end{proposition}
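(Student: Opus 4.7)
The plan is to work with the exact resolvent $\hat w^k:=\cJ_{\sigma_k \cM^{-1}T}(z^k)$ alongside the approximate point $w^k$ so that I can translate between $\|z^{k+1}-z^k\|_{\cM}$, $\|w^k-z^k\|_{\cM}$, and $\dist_{\cM}(z^k,\Omega)$. The first identity I would record is that since $z^{k+1}=\gamma w^k+(1-\gamma)z^k$, we have $z^{k+1}-z^k=\gamma(w^k-z^k)$, hence $\|z^{k+1}-z^k\|_{\cM}=\gamma\|w^k-z^k\|_{\cM}$. Combined with the inexactness criterion~\eqref{GPPA:stop}, namely $\|w^k-\hat w^k\|_{\cM}\le \delta\|w^k-z^k\|_{\cM}$, this reduces everything to bounding $\|w^k-z^k\|_{\cM}$ against $\dist_{\cM}(z^k,\Omega)$ from both sides.

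For the lower bound $\tfrac{1-\delta}{\gamma}\|z^{k+1}-z^k\|_{\cM}\le \dist_{\cM}(z^k,\Omega)$, the key is that every $z^\ast\in\Omega$ is a fixed point of the resolvent of the maximal monotone operator $\cM^{-1}T$ on the Hilbert space $(\cX,\langle\cdot,\cdot\rangle_{\cM})$. Firm non-expansiveness of $\cJ_{\sigma_k\cM^{-1}T}$ in the $\cM$-norm yields $\|z^k-\hat w^k\|_{\cM}\le \|z^k-z^\ast\|_{\cM}$, so taking the infimum over $z^\ast\in\Omega$ gives $\|z^k-\hat w^k\|_{\cM}\le \dist_{\cM}(z^k,\Omega)$. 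A reverse triangle inequality then produces
\[
(1-\delta)\|w^k-z^k\|_{\cM}\le \|z^k-\hat w^k\|_{\cM}\le \dist_{\cM}(z^k,\Omega),
\]
which I divide by $\gamma$ after substituting $\|w^k-z^k\|_{\cM}=\tfrac{1}{\gamma}\|z^{k+1}-z^k\|_{\cM}$.

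For the upper bound I would invoke Lemma~\ref{lemma:key}, which gives $\dist_{\cM}(\hat w^k,\Omega)\le c_k \dist_{\cM}(z^k,\Omega)$ with $c_k:=\sqrt{\kappa_r^2/(\sigma_k^2+\kappa_r^2)}$. Applying the triangle inequality for the distance,
\[
\dist_{\cM}(z^k,\Omega)\le \|z^k-\hat w^k\|_{\cM}+\dist_{\cM}(\hat w^k,\Omega)\le \|z^k-\hat w^k\|_{\cM}+c_k\dist_{\cM}(z^k,\Omega),
\]
so $(1-c_k)\dist_{\cM}(z^k,\Omega)\le \|z^k-\hat w^k\|_{\cM}$. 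One more triangle inequality, together with the inexactness bound, gives $\|z^k-\hat w^k\|_{\cM}\le (1+\delta)\|w^k-z^k\|_{\cM}=\tfrac{1+\delta}{\gamma}\|z^{k+1}-z^k\|_{\cM}$, and dividing through by $(1-c_k)$ yields the stated upper bound.

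The argument is essentially routine once $\hat w^k$ is introduced; the only mild subtlety is to remember that both firm non-expansiveness and Lemma~\ref{lemma:key} must be applied in the $\cM$-inner product (so that the resolvent is interpreted with respect to the maximal monotone operator $\cM^{-1}T$), and to keep track of the two error terms coming from the inexactness criterion~\eqref{GPPA:stop}. I do not anticipate any real obstacle beyond bookkeeping.
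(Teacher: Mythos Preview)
Your proposal is correct and follows essentially the same route as the paper's proof: both arguments rest on the identity $z^{k+1}-z^k=\gamma(w^k-z^k)$, the bound $\|z^k-\hat w^k\|_{\cM}\le\dist_{\cM}(z^k,\Omega)$ coming from firm non-expansiveness (the paper cites this as Proposition~\ref{prop:useful}\ref{third}), Lemma~\ref{lemma:key} for the contraction of $\dist_{\cM}(\hat w^k,\Omega)$, and triangle inequalities together with the inexactness criterion~\eqref{GPPA:stop}. The only cosmetic difference is that the paper writes $Q_{\sigma_k\cM^{-1}T}(z^k)$ for your $z^k-\hat w^k$.
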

\begin{corollary}\label{cor:checkablecondition}
  If~\eqref{a:sigmabound3} holds, then
\begin{equation}\label{eq:GPPAlinearrate3}
\norm{z^{k+1} - z^k}_{\cM} \le  C \rho^k \norm{z^{1} - z^0}_{\cM},\enspace\forall k\geq 0,
\end{equation}
where
$$
C:=\frac{1 + \delta}{\bracket{1-\delta}\bracket{1-   \sqrt{\frac{ 1}{\alpha^2 +  1}}}}.
$$
\end{corollary}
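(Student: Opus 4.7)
The plan is to chain together the two bounds that are already in hand: the distance-versus-step-length sandwich in Proposition~\ref{prop:leftright} and the linear decay of $\dist_{\cM}(z^k,\Omega)$ in Corollary~\ref{cor:dfe}. The intuition is that $\norm{z^{k+1}-z^k}_{\cM}$ and $\dist_{\cM}(z^k,\Omega)$ are equivalent up to multiplicative constants, and the latter decays at rate $\rho$ under~\eqref{a:sigmabound3}.

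Concretely, I would proceed as follows. First, apply the left-hand inequality of Proposition~\ref{prop:leftright} at iteration $k$ to obtain
\[
\norm{z^{k+1}-z^k}_{\cM} \;\le\; \frac{\gamma}{1-\delta}\,\dist_{\cM}(z^k,\Omega).
\]
Next, invoke Corollary~\ref{cor:dfe} (which applies since~\eqref{a:sigmabound3} is assumed) to replace the distance term by $\rho^k\,\dist_{\cM}(z^0,\Omega)$. Finally, apply the right-hand inequality of Proposition~\ref{prop:leftright} at $k=0$ to bound $\dist_{\cM}(z^0,\Omega)$ in terms of the first step $\norm{z^1-z^0}_{\cM}$.

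The one point requiring a short calculation is showing that the coefficient in the $k=0$ upper bound from Proposition~\ref{prop:leftright} is no larger than $\frac{1+\delta}{\gamma\,(1-\sqrt{1/(\alpha^2+1)})}$. Because $\sigma_0\geq \kappa_r\alpha$, we have
\[
\frac{\kappa_r^2}{\sigma_0^2+\kappa_r^2} \;\le\; \frac{1}{\alpha^2+1},
\]
so $1-\sqrt{\kappa_r^2/(\sigma_0^2+\kappa_r^2)} \ge 1-\sqrt{1/(\alpha^2+1)}$, and the desired bound on the coefficient follows. Multiplying the three inequalities together then yields
\[
\norm{z^{k+1}-z^k}_{\cM} \;\le\; \frac{1+\delta}{(1-\delta)\bigl(1-\sqrt{1/(\alpha^2+1)}\bigr)}\,\rho^k\,\norm{z^1-z^0}_{\cM}=C\rho^k\norm{z^1-z^0}_{\cM},
\]
which is~\eqref{eq:GPPAlinearrate3}.

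There is no real obstacle here; the proof is essentially a one-line composition of previously established ingredients. The only minor care needed is to verify that the upper bound from Proposition~\ref{prop:leftright} can be made $\sigma_k$-independent at $k=0$ using~\eqref{a:sigmabound3}, which is the monotonicity observation above. Everything else is a direct substitution.
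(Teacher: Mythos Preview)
Your proposal is correct and is precisely the intended argument: the paper states the corollary without an explicit proof, and the derivation is exactly the chaining of the left inequality of Proposition~\ref{prop:leftright} at step $k$, Corollary~\ref{cor:dfe}, and the right inequality of Proposition~\ref{prop:leftright} at step $0$ (with the monotonicity observation you made to replace $\sigma_0$ by $\alpha\kappa_r$). The $\gamma$ factors cancel as you found, yielding the constant $C$.
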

Corollary~\ref{cor:checkablecondition} is practically more interesting than Corollary~\ref{cor:dfe} since~\eqref{eq:GPPAlinearrate3} can always be verified at each iteration.

Let $\cE:\cX \rightarrow \bR_{+}$ be a computable error residual function that we use to measure the approximation to $\Omega$.  We shall assume the existence of a constant $\zeta>0$ such that 
\begin{align}\label{a:cEl}
\cE (z) \le \zeta \dist \bracket{z,\Omega},~\forall z \in \cX.
\end{align}
 If inequality~\eqref{eq:GPPAlinearrate3} holds for all $k\geq 0$, then  $\left\{\norm{z^{k+1} - z^k}_{\cM}\right\}_{k\ge0}$ and consequently $\{\cE (z^k)\}_{k\ge0}$  decreases linearly with rate $\rho$.
\begin{proposition}\label{prop:d}
If~\eqref{eq:GPPAlinearrate3} holds, then $\cE (z^k) \le \epsilon$ after 
\[
k\ge 
\log_{\frac{1}{\rho}}\bracket{\frac{R (\sigma_k)\zeta \dist_{\cM}\bracket{z^0,\Omega}}{\lambda_{\min}(\cM) \epsilon}}
\]
number of IGPPA steps, where
\begin{equation}\label{eq:R}
R (\sigma):= \frac{C\bracket{1 + \delta}}{ \bracket{1-\delta}\bracket{1-  \sqrt{ \frac{\kappa_r^2 }{\sigma^2 + \kappa_r^2 }}}}.
 \end{equation}
\end{proposition}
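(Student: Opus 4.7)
The plan is to chain together three ingredients that are already established above: the computable error-bound hypothesis~\eqref{a:cEl}, the two-sided bound of Proposition~\ref{prop:leftright}, and the geometric decay of consecutive differences given by~\eqref{eq:GPPAlinearrate3}. The desired bound on $k$ will then fall out by taking $\log_{1/\rho}$ of the resulting geometric inequality.

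First I would control $\cE(z^k)$ by the weighted distance. Using~\eqref{a:cEl} and the norm comparison $\norm{z}\leq \norm{z}_{\cM}/\sqrt{\lambda_{\min}(\cM)}$ (which, since $\lambda_{\max}(\cM)=1$ forces $\lambda_{\min}(\cM)\leq 1$, also yields the looser but simpler bound $\norm{z}\leq \norm{z}_{\cM}/\lambda_{\min}(\cM)$ used in the statement), I obtain
\[
\cE(z^k)\;\leq\;\zeta\,\dist(z^k,\Omega)\;\leq\;\frac{\zeta}{\lambda_{\min}(\cM)}\,\dist_{\cM}(z^k,\Omega).
\]
Next I apply the right-hand inequality of Proposition~\ref{prop:leftright} at iteration $k$ to obtain
\[
\dist_{\cM}(z^k,\Omega)\;\leq\;\frac{1+\delta}{\gamma\bracket{1-\sqrt{\kappa_r^2/(\sigma_k^2+\kappa_r^2)}}}\,\norm{z^{k+1}-z^k}_{\cM},
\]
which already produces the $\sigma_k$-dependent factor appearing in the definition~\eqref{eq:R} of $R(\sigma_k)$.

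Second I would plug in~\eqref{eq:GPPAlinearrate3} to replace $\norm{z^{k+1}-z^k}_{\cM}$ by $C\rho^k\norm{z^1-z^0}_{\cM}$, and then use the \emph{left}-hand inequality of Proposition~\ref{prop:leftright} taken at $k=0$, namely $\norm{z^1-z^0}_{\cM}\leq \frac{\gamma}{1-\delta}\dist_{\cM}(z^0,\Omega)$, to convert the initial gap into $\dist_{\cM}(z^0,\Omega)$. Collecting constants, the two $\gamma$'s cancel and the two factors $(1+\delta)$ and $1/(1-\delta)$ merge with $C$ to reproduce exactly $R(\sigma_k)$ as in~\eqref{eq:R}, yielding
\[
\cE(z^k)\;\leq\;\frac{R(\sigma_k)\,\zeta\,\dist_{\cM}(z^0,\Omega)}{\lambda_{\min}(\cM)}\,\rho^k.
\]

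Finally, enforcing $\cE(z^k)\leq \epsilon$ and solving the resulting inequality $\rho^k\cdot(\text{prefactor})\leq \epsilon$ by applying $\log_{1/\rho}$ to both sides gives precisely the lower bound on $k$ claimed in the statement. No step looks like a genuine obstacle: the only care required is bookkeeping the factor $\gamma/(1-\delta)$ coming from the left inequality of Proposition~\ref{prop:leftright} at $k=0$ and verifying that it cancels cleanly against the $1/\gamma$ factor produced by the right inequality at iteration $k$, so that the constant $R(\sigma_k)$ emerges exactly with the expression~\eqref{eq:R}.
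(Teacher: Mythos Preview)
Your proposal is correct and follows essentially the same route as the paper's proof: both combine the error-residual bound~\eqref{a:cEl}, the two inequalities of Proposition~\ref{prop:leftright} (the right one at step $k$, the left one at step $0$), and the geometric decay~\eqref{eq:GPPAlinearrate3}, with the $\gamma$'s cancelling to produce exactly $R(\sigma_k)$. The only cosmetic difference is the order of presentation---you derive the bound $\cE(z^k)\le R(\sigma_k)\zeta\dist_{\cM}(z^0,\Omega)\rho^k/\lambda_{\min}(\cM)$ and then solve for $k$, while the paper assumes the lower bound on $k$ first and works backward to $\cE(z^k)\le\epsilon$.
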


If instead~\eqref{eq:GPPAlinearrate3} does not hold, we know that~\eqref{a:sigmabound3} is false.   Then, we get a certificate of $\sigma_k$ being too small,  which suggests us to increase the next parameter $\sigma_{k+1}$. 
Note that Corollary~\ref{cor:checkablecondition} can be strengthened as follows.
\begin{corollary}\label{cor:checkablecondition2}
If~\eqref{a:sigmabound3} holds, then
\[
\norm{z^{k+1} - z^k}_{\cM} \le  C  \min_{0\leq j\leq k}\rho^{k-j} \norm{z^{j+1} - z^j}_{\cM},\enspace\forall k\geq 0.
\]
\end{corollary}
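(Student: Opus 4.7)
The plan is to exploit the memorylessness of IGPPA: once we have proved the linear contraction of $\dist_{\cM}(z^k,\Omega)$ under assumption~\eqref{a:sigmabound3}, the same bound applies to any tail of the sequence, since $\{z^{j+i}\}_{i\ge 0}$ is itself an IGPPA sequence started at $z^j$ with proximal regularization parameters $\{\sigma_{j+i}\}_{i\ge 0}$ still satisfying $\sigma_{j+i}\ge \kappa_r\alpha$. Consequently, for each $j\in\{0,1,\ldots,k\}$, Corollary~\ref{cor:dfe} applied to the shifted sequence gives
\[
\dist_{\cM}\bracket{z^k,\Omega}\le \rho^{k-j}\dist_{\cM}\bracket{z^j,\Omega}.
\]

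Next I would translate this distance bound into one in terms of the consecutive-iterate residuals $\norm{z^{i+1}-z^i}_{\cM}$ using Proposition~\ref{prop:leftright} at both ends. The left inequality of Proposition~\ref{prop:leftright} at index $k$ yields
\[
\norm{z^{k+1}-z^k}_{\cM}\le \frac{\gamma}{1-\delta}\dist_{\cM}\bracket{z^k,\Omega},
\]
and the right inequality at index $j$ gives
\[
\dist_{\cM}\bracket{z^j,\Omega}\le \frac{1+\delta}{\gamma\bracket{1-\sqrt{\kappa_r^2/(\sigma_j^2+\kappa_r^2)}}}\norm{z^{j+1}-z^j}_{\cM}.
\]
Concatenating these three inequalities and using $\sigma_j\ge \kappa_r\alpha$ to upper-bound $\sqrt{\kappa_r^2/(\sigma_j^2+\kappa_r^2)}\le \sqrt{1/(\alpha^2+1)}$ produces precisely the constant $C$ of Corollary~\ref{cor:checkablecondition}, yielding
\[
\norm{z^{k+1}-z^k}_{\cM}\le C\rho^{k-j}\norm{z^{j+1}-z^j}_{\cM}\quad \text{for each } j\in\{0,\ldots,k\}.
\]
Taking the minimum over $j$ on the right-hand side delivers the strengthened bound.

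There is essentially no obstacle: the argument is identical to that of Corollary~\ref{cor:checkablecondition}, except that the chain starts at a general index $j$ rather than $j=0$. The only point worth noting is that the constant $C$ is uniform in $j$, which is what allows us to pull the minimum outside. The assumption~\eqref{a:sigmabound3} holds for all $k\ge 0$, so both Corollary~\ref{cor:dfe} applied to the tail and the uniformization of the Proposition~\ref{prop:leftright} upper bound are legitimate for every $j$, completing the proof.
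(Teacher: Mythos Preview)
Your proof is correct and is exactly the natural argument the paper has in mind: the paper presents this corollary as an immediate strengthening of Corollary~\ref{cor:checkablecondition} without separate proof, and your shift-the-starting-index argument (iterate Theorem~\ref{thm:linearrate} from $j$ to $k$, then sandwich with Proposition~\ref{prop:leftright} and uniformize the constant via $\sigma_j\ge\kappa_r\alpha$) is precisely that strengthening made explicit.
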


\section{Adaptive Generalized PPA  (AGPPA)}\label{section:AGPPA}
In this section, we apply the results in the previous section to  adaptively choose the proximal regularization parameters.
We propose a double loop algorithm with  $s$ and $t$ being respectively the number of outer  and inner iterations. 
Given an accuracy parameter $\epsilon>0$, the objective is to find a solution $z\in \cX$ such that $\cE (z)\leq \epsilon$. 
Choose some
$
\eta_0 > 0, \varsigma > 1, \varrho_{\eta} \in  (0,1)
$
, and define the sequence  $\{ \eta_{s,t} \}_{s\ge0, t\ge0}$  as
\begin{equation}\label{varepsilon:update}
\eta_{0,0}=\eta_0, \eta_{s+1,0} = \eta_{s,0} \varrho_{\eta},~\eta_{s,t} = \eta_{s,0} \bracket{1 + t}^{-\varsigma},~\forall s \ge 0, t \ge 0.
\end{equation}
Choose some  $\sigma_0>0$ and $\rho_\sigma>1$, and  define the sequence  $\{ \sigma_{s} \}_{s\ge0}$  as
 \begin{equation}\label{eq:updatesigma}
 \sigma_{s+1} = \sigma_s \varrho_{\sigma},~\forall s\ge0.
 \end{equation}
We generate the sequence  $\{ z^{s,t} \}_{s\ge0, t\ge0}$ from  an arbitrary  initial point $z^{0,0}$ by 
\begin{equation}\label{eq:generate_points}
\left\{\begin{array}{l}
z^{s,t+1} = \GPPAstep \bracket{z^{s,t}, \eta_{s,t}, \sigma_s, \delta, \gamma, \cM},~\forall   s\ge0, N_s\ge t \ge 0,\\
z^{s+1,0}=\arg\min \{ \cE \bracket{z} :  z\in \{z^{s,0},\dots,z^{s,N_s+1} \}\},
\end{array}
\right.
\end{equation}
where $N_s$ is the smallest $t$ such that either 
\begin{align}\label{a:cEE}
\cE (z^{s,t}) \le \epsilon,
\end{align}
or
\begin{align}\label{a:drerer}
\norm{ z^{s,t+1} - z^{s,t}}_{\cM}>  C \min_{0 \le j \le t} \left\{\rho^{t -j} \norm{ z^{s,j+1}- z^{s,j}}_{\cM}\right\}.
\end{align}
At each outer iteration $s$, we run  IGPPA with parameter $\sigma_s$ until either ~\eqref{a:cEE} or~\eqref{a:drerer} holds.   The sequence of the proximal regularization parameters $\{\sigma_s\}_{s \ge 0}$ is increased by a fixed factor $\rho_\sigma$  when~\eqref{a:cEE} holds before~\eqref{a:drerer} is reached. Each outer iteration starts from an iteration point that minimizes the error residual function $\cE$ among all the past iteration points.
We call the  algorithm~\eqref{eq:generate_points}  Adaptive Generalized Proximal Point Algorithm  (AGPPA). 
An equivalent description of AGPPA is given in Algorithm~\ref{alg:AGPPA}.
 \renewcommand{\thealgorithm}{1}
\begin{algorithm}
	\caption{$\AGPPA$}
	\label{alg:AGPPA}
	 \textbf{Input:} $\epsilon>0$, $z^0\in \cX$ 
	 
	 \textbf{Parameters:}  $\cM\in \cS^{++}$, $\eta_0 > 0$, $\varsigma > 1$, $\varrho_{\eta} \in  (0,1)$, $\sigma_0>0$,  $\varrho_{\sigma} >1$, $\alpha > 0$, $\delta \in [0, 1/2)$, $\gamma \in  (0,2)$
	 
	 \textbf{Initialize:}  $z^{0,0} = z^0, \eta_{0,0} = \eta_0, s=0$
	\begin{algorithmic}[1]
	\While{$\cE (z^{s,0}) > \epsilon$}
	\State $t = -1$
	\Repeat \label{eq:repeat}
	\State $t=t+1$
	\State $\eta_{s,t} = \eta_{s,0} \bracket{1 + t}^{-\varsigma}$
	\State \label{line:subproblem}$z^{s,t+1} =  \GPPAstep (z^{s,t}, \sigma_s, \eta_{s,t}, \delta,\gamma,\cM)$
	\Until{$\norm{ z^{s,t+1} - z^{s,t}}_{\cM}>  C \min_{0 \le j \le t} \left\{\rho^{t -j} \norm{ z^{s,j+1}- z^{s,j}}_{\cM}\right\}$ {\bf or} $\cE (z^{s,t}) \le \epsilon$ \label{line:checkingcondition} }
	\State
	$N_s = t $
			\If{$\cE (z^{s,t}) \le \epsilon$},  $z^{s+1,0} = z^{s,N_s}$
		\Else
         \State $z^{s+1,0}=\arg\min \{ \cE \bracket{z} :  z\in \{z^{s,0},\dots,z^{s,N_{s}+1} \}\}$
                \State $\sigma_{s+1} = \sigma_{s} \varrho_{\sigma}$ 
	\State $\eta_{s+1,0} = \eta_{s,0} \varrho_{\eta}$
	               \EndIf
         \State $s=s+1$
	\EndWhile
	\State $z_{\circ}=z^{s,0}$
	\If{$s == 0$}
	        \State $ s_{\circ}= 0$
	        \State $N_{\circ}=0$
     \Else
		\State $ s_{\circ}= s-1$
		\State $N_{\circ}=\sum_{s=0}^{ s_{\circ}}  (N_s+1)$
		\EndIf
	\State $\sigma_{\circ}=\sigma_{ s_{\circ}}$
	\State $\eta_{\circ}=\eta_{s_{\circ}, N_{s_\circ}}$
	\end{algorithmic}
	\textbf{Output:} $ (z_{\circ},    N_{\circ},  \sigma_{\circ},  \eta_{\circ} ,s_{\circ})$
\end{algorithm} 

Algorithm~\ref{alg:AGPPA} takes two inputs: an accuracy parameter $\epsilon>0$ and an initial point $z^0\in \cX$.  It terminates when an approximate solution $z_{\circ}\in \cX$ such that  $\cE (z_{\circ})\leq \epsilon$ is found. The output reports the solution $z_{\circ}$, as well as the total number of IGPPA steps  $N_{\circ}$, the last proximal regularization  parameter $\sigma_{\circ}$, the last error  parameter $\eta_{\circ}$, and the total number of outer iterations $ s_{\circ}$.  Note that $\rho$ in Line~\ref{line:checkingcondition}  of Algorithm~\ref{alg:AGPPA} is defined in~\eqref{eq:rho} using the parameters.  

Next we show that without any knowledge of the bounded metric subregularity parameter $\kappa_r$, the total number of IGPPA steps $N_{\circ}$ required before finding a solution $z$ such that  $\cE (z)\leq \epsilon$ satisfies
$$
N_{\circ}\leq O\left ( \ln\kappa_r \ln  \frac{ r \kappa_r  }{\epsilon}\right).
$$
\begin{theorem}
\label{thm:AGPPA}
 Suppose that the parameters required in  Algorithm~\ref{alg:AGPPA} are chosen such that $\rho$ defined in~\eqref{eq:rho} is strictly less than $1$. 
 For any initial point  $z^0$, there is a constant $\kappa_r>0$ such that for any $\epsilon>0$,
Algorithm~\ref{alg:AGPPA} terminates with output $ (z_{\circ},    N_{\circ},  \sigma_{\circ},  \eta_{\circ} ,s_{\circ})$ satisfying
\begin{equation}
\cE (z_{\circ}) \leq \epsilon,
\end{equation}
\begin{align}
& s_{\circ}\leq  \bar s:=\left\lceil \max\left ( \log_{\varrho_{\sigma}} \bracket{ \frac{\kappa_r  \alpha}{\sigma_0}} ,0\right)\right\rceil, \label{a:bars} 
\end{align}
\begin{align}
&\sigma_{\circ}\leq \bar\sigma:= \alpha \kappa_r \varrho_{\sigma}, \label{a:sigma} 
\end{align}
\begin{align}
&  N_{\circ}\leq \bar N:= \bar s \left\lceil \max\left (\log_{\frac{1}{\rho}}\bracket{\frac{\bar R}{ \epsilon}}, 0 \right)+1\right\rceil, \label{a:N}
\end{align}
\begin{align}
& \eta_{\circ}\geq \bar \eta:= \eta_0\varrho_{\eta}^{\bar s} \left\lceil \max\left (\log_{\frac{1}{\rho}}\bracket{\frac{\bar R}{ \epsilon}}, 0 \right)+1\right\rceil ^{-\varsigma}, \label{a:eta}  
\end{align}
where 
\begin{align}\label{a:defR}
\bar R:=\frac{  (1+\delta)^2\zeta r  }{ (1-\delta)^2 \bracket{1-   \sqrt{\frac{ 1}{\alpha^2 +  1}}}\bracket{1-  \sqrt{ \frac{\kappa_r^2 }{\sigma_0^2 + \kappa_r^2 }}} \lambda_{\min}(\cM)},
\end{align}
with $r$ given by
\begin{equation}\label{r:redef}
 r =\norm{\bar z^0} + \frac{1}{ \lambda_{\min}(\cM) } \left (\dist_{\cM} (z^{0}, \Omega) +  \frac{ \gamma\varsigma \eta_0}{ (\varsigma -1) (1-\varrho_{\eta})} \right).
\end{equation} 
\end{theorem}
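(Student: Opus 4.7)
The plan is to establish the estimates in order: first fix a uniform radius $r$ and thus $\kappa_r$; then bound $s_\circ$ and $\sigma_\circ$; and finally bound $N_\circ$ and $\eta_\circ$. I would invoke Theorem~\ref{GPPA:convergence} on each inner sequence and glue the outer iterations together through the restart rule, using that $z^{s+1,0}$ is one of the already-computed $z^{s,t}$, so the Fej\'er-type monotonicity~\eqref{eq:zbound1} chains across restarts to give~\eqref{a:zkzo} for every iterate, not merely those within a single outer iteration. The total error budget is $\sum_{s,t}\eta_{s,t} = \sum_s \eta_0\varrho_\eta^s \sum_t (1+t)^{-\varsigma} \le \eta_0\varsigma/((\varsigma-1)(1-\varrho_\eta))$, finite because $\varsigma>1$ and $\varrho_\eta<1$. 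Plugging this into~\eqref{r:def} gives $r$ exactly as in~\eqref{r:redef}, and invoking Assumption~\ref{assume:growthcondition} with this $r$ fixes the constant $\kappa_r$ used in every subsequent estimate.

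For the outer loop, the key observation is that Corollary~\ref{cor:checkablecondition2}, applied to the inner sequence $\{z^{s,t}\}_{t\ge 0}$, implies that whenever $\sigma_s \ge \kappa_r\alpha$ the violation condition~\eqref{a:drerer} cannot be triggered. Hence the inner loop can only exit via $\cE(z^{s,t})\le\epsilon$, which in turn forces the outer while-loop to terminate. Since $\sigma_s = \sigma_0\varrho_\sigma^s$, the threshold $\sigma_s \ge \kappa_r\alpha$ is crossed no later than $s = \bar s$, which yields~\eqref{a:bars}. The bound~\eqref{a:sigma} follows by substituting $s_\circ \le \bar s$ into $\sigma_\circ = \sigma_0\varrho_\sigma^{s_\circ}$ and using $\bar s \le \log_{\varrho_\sigma}(\kappa_r\alpha/\sigma_0)+1$, producing $\sigma_\circ \le \varrho_\sigma\kappa_r\alpha$.

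Finally, I would bound $N_s$ per outer iteration via Proposition~\ref{prop:d}: for each $\tau<N_s$ both $\cE(z^{s,\tau})>\epsilon$ and the checking condition hold, and the case $j=0$ of the checking condition is precisely~\eqref{eq:GPPAlinearrate3} for the inner sequence $\{z^{s,t}\}$. Hence $N_s$ cannot exceed $\lceil \log_{1/\rho}(R(\sigma_s)\zeta\dist_\cM(z^{s,0},\Omega)/(\lambda_{\min}(\cM)\epsilon))\rceil$. The main obstacle of the proof lies here: converting this $s$-dependent bound into the uniform expression $\bar R/\epsilon$ of~\eqref{a:defR}. Two observations suffice: $R(\sigma)$ defined in~\eqref{eq:R} is decreasing in $\sigma$, so $R(\sigma_s)\le R(\sigma_0)$; and the uniform ball argument of the first step gives $\dist_\cM(z^{s,0},\Omega)\le r$. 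Substituting and summing $N_s+1$ over $s=0,\dots,s_\circ$ with $s_\circ\le\bar s$ produces~\eqref{a:N}, while the monotone update rules $\eta_{s,0}=\eta_0\varrho_\eta^s$ and $\eta_{s,t}=\eta_{s,0}(1+t)^{-\varsigma}$ combined with the per-iteration bound on $N_{s_\circ}$ yield~\eqref{a:eta}. The terminal inequality $\cE(z_\circ)\le\epsilon$ is immediate from the outer while-loop termination rule.
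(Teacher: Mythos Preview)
Your proposal is correct and follows essentially the same route as the paper's own proof: summability of $\{\eta_{s,t}\}$ to fix $r$ and hence $\kappa_r$; Corollary~\ref{cor:checkablecondition2} to show that once $\sigma_s\ge\kappa_r\alpha$ the violation~\eqref{a:drerer} cannot occur, giving the bound on $s_\circ$; monotonicity of $R(\cdot)$ together with $\dist_\cM(z^{s,0},\Omega)\le r$ to convert the per-outer-iteration estimate of Proposition~\ref{prop:d} into the uniform bound $\bar R/\epsilon$; and finally the update rules for $\eta_{s,t}$ to obtain~\eqref{a:eta}. If anything, you are slightly more explicit than the paper on two points the paper glosses over: the chaining of~\eqref{eq:zbound1} across restarts (via the fact that $z^{s+1,0}$ is one of the previously generated iterates), and the observation that the $j=0$ case of the non-violated checking condition is exactly~\eqref{eq:GPPAlinearrate3}, which is what licenses the application of Proposition~\ref{prop:d}.
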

\begin{proof}
It is easy to verify that $\{ \eta^{s,t} \}_{s \ge 0, t \ge 0}$  defined by~\eqref{varepsilon:update} is summable:
\[
 \sum_{s=0}^{\infty} \sum_{t = 0}^{\infty} \eta_{s,t}  \le   \frac{\varsigma \eta_0}{ (\varsigma -1) (1-\varrho_{\eta})} .
\]
 Therefore, the sequence $\{z^{s,t}\}_{s \ge 0, t \ge 0}$ generated by AGPPA~\eqref{eq:generate_points} falls into the general  framework as described in Theorem~\ref{GPPA:convergence}, and for any $s \ge 0, 0\leq t\leq N_s+1$, 
\begin{align}\label{a:zstb}
\norm{z^{s,t}}\le  \norm{\bar z^{0,0}} + \frac{1}{ \lambda_{\min}(\cM) } \left (\dist_{\cM} (z^{0,0}, \Omega) + \gamma 
 \sum_{s=0}^{\infty} \sum_{t = 0}^{\infty} \eta_{s,t} \right) \leq r.
 \end{align}  
Same as~\eqref{eq:dfdf}, 
it derives from Assumption~\ref{assume:growthcondition} that there is $\kappa_r>0$ such that for any  $s\geq 0, 0\leq t\leq N_s+1$,
\[
 \dist \bracket{z^{s,t},\Omega} \le \kappa_r \dist \bracket{0, T (z^{s,t})}.
\]
 Then, we learn from Proposition~\ref{prop:d} that  for any $ s\geq 0$,
 \begin{equation}\label{a:Ns0}
 \begin{aligned}
& N_s  \leq  \left\lceil \max \left\{ \log_{\frac{1}{\rho}}\bracket{\frac{R (\sigma_s)\zeta \dist_{\cM}\bracket{z^{s,0},\Omega}}{\lambda_{\min}(\cM) \epsilon}} , 0 \right\} \right\rceil \\
&\leq \left\lceil \max \left\{ \log_{\frac{1}{\rho}}\bracket{\frac{R (\sigma_0)\zeta r }{\lambda_{\min}(\cM) \epsilon}}, 0 \right\} \right\rceil.
\end{aligned}
 \end{equation}
  Here, the second inequality is based on~\eqref{a:zkzo},~\eqref{a:lambdamaxM1}, and the fact that $R (\sigma_s)\leq R (\sigma_0)$.
  Plugging in~\eqref{a:Ns0} the definition~\eqref{eq:R},  we obtain the following bound
  on the number of inner iterations: for any $ s\geq 0$,
   \begin{align}\label{a:Ns}
   N_s\leq \left\lceil \max\left (\log_{\frac{1}{\rho}}\bracket{\frac{\bar R}{ \epsilon}}, 0 \right)\right\rceil,
 \end{align}
 with $\bar R$ defined in~\eqref{a:defR}.
 Moreover, 
 in view of~\eqref{eq:updatesigma},  once 
  the outer iteration counter $s$ satisfies:
 $$
 s\geq    \left\lceil \max\left ( \log_{\varrho_{\sigma}} \bracket{ \frac{\kappa_r  \alpha}{\sigma_0}} ,0\right)\right\rceil,
 $$
 we will have
 $$
 \sigma_s\geq \kappa_r  \alpha.
 $$
Based on Corollary~\ref{cor:checkablecondition2},
 condition~\eqref{a:drerer} will not occur  when $\sigma_s\geq \kappa_r \alpha$, and the algorithm terminates at this outer iteration. Consequently, we obtain~\eqref{a:bars} and hence~\eqref{a:sigma}. 
  The bound~\eqref{a:N} on 
 the total number of IGPPA steps  $N_{\circ}$ derives from~\eqref{a:bars} and~\eqref{a:Ns}.  
 To obtain~\eqref{a:eta},  it suffices to note that
 $$
 \eta_{\circ}=\eta_{ s_{\circ},0} (1+N_{s_\circ})^{-\varsigma}\geq \eta_{0} \varrho_{\eta}^{\bar s} (1+\bar N)^{-\varsigma}.
 $$
\end{proof}
\begin{remark}
Note that the output $ (z_{\circ},    N_{\circ},  \sigma_{\circ},  \eta_{\circ} ,s_{\circ})$ of Algorithm~\ref{alg:AGPPA} can be random. Indeed, as we shall see below, the algorithm to solve the IGPPA step can be a randomized method and in this case, all the sequences produced by Algorithm~\ref{alg:AGPPA} are random. However, their bounds $ (\bar N, \bar \sigma, \bar \eta, \bar s)$ given 
in Theorem~\ref{thm:AGPPA}  are all deterministic.
\end{remark}

In the definitions~\eqref{a:bars},~\eqref{a:sigma},~\eqref{a:N} and~\eqref{a:eta}, the constants $\rho$, $\sigma_0$, $\varrho_{\sigma}$, $\alpha$, $\gamma$, $\delta$, $\eta_0$, $\varsigma$, $\varrho_{\eta}$ and $\lambda_{\min}(\cM)$ are all user-defined parameters.   
  The constant $\kappa_r$ is such that~\eqref{growthcondition} holds with $r$ defined in~\eqref{r:redef}. In the following context, to get a better understanding on the complexity, 
 we  ignore the user-defined constants to  extract out of~\eqref{a:bars},~\eqref{a:sigma},~\eqref{a:N} and~\eqref{a:eta} the
dependence on $r$,  $\kappa_r$, $\zeta$ and $\epsilon$. In particular, we consider  the user-defined constants $\rho$, $\sigma_0$, $\varrho_{\sigma}$, $\alpha$, $\gamma$, $\delta$, $\eta_0$, $\varsigma$, $\varrho_{\eta}$ and $\lambda_{\min}(\cM)$ as problem-independent constants.
\begin{corollary}\label{cor:wrd}
We have:
\begin{align*}
&\bar s=O\left (\ln \kappa_r\right),\\
& \bar \sigma= O\left ( \kappa_r\right),\\
& -\ln\bar \eta=O\left ( \ln \kappa_r+\ln\ln\left (\frac{\zeta  r \kappa_r  }{\epsilon}\right)\right),\\
&\bar N=O\left ( \ln\kappa_r \ln  \left (\frac{\zeta r \kappa_r  }{\epsilon}\right)\right),
\end{align*}
where the big O hides the problem-independent constants.
\end{corollary}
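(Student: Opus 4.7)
The plan is to read off each of the four asymptotic bounds from the explicit expressions $\bar s, \bar \sigma, \bar N, \bar \eta$ given in Theorem~\ref{thm:AGPPA}, absorbing all user-defined parameters $(\rho, \sigma_0, \varrho_{\sigma}, \alpha, \gamma, \delta, \eta_0, \varsigma, \varrho_{\eta}, \lambda_{\min}(\cM))$ into the hidden constants. Two of the bounds are essentially trivial. The formula $\bar \sigma = \alpha \varrho_{\sigma} \kappa_r$ is already linear in $\kappa_r$, so $\bar \sigma = O(\kappa_r)$. The formula $\bar s = \lceil \max(\log_{\varrho_\sigma}(\kappa_r \alpha/\sigma_0), 0)\rceil$ splits into the cases $\kappa_r \alpha \leq \sigma_0$ (giving $\bar s = 0$) and $\kappa_r \alpha > \sigma_0$ (giving $\bar s \leq C_1 \ln \kappa_r + C_1'$), so $\bar s = O(\ln \kappa_r)$ in all cases.

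The substantive step is to control $\ln(\bar R/\epsilon)$. The only $\kappa_r$-dependent factor in the denominator of $\bar R$ is $1 - \sqrt{\kappa_r^2/(\sigma_0^2 + \kappa_r^2)}$, which I rewrite via the conjugate as
\[
1 - \frac{\kappa_r}{\sqrt{\sigma_0^2 + \kappa_r^2}} = \frac{\sigma_0^2}{\left(\sqrt{\sigma_0^2 + \kappa_r^2} + \kappa_r\right)\sqrt{\sigma_0^2 + \kappa_r^2}}.
\]
Splitting into the cases $\kappa_r \leq \sigma_0$ and $\kappa_r > \sigma_0$ gives a lower bound of the form $c \min(1, \sigma_0^2/\kappa_r^2)$ with $c$ problem-independent, so after inversion $\bar R \leq C_2 \zeta r \max(1, \kappa_r^2)$. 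Taking logarithms and using the elementary inequality $\ln(\zeta r \kappa_r^2/\epsilon) = \ln(\zeta r \kappa_r/\epsilon) + \ln \kappa_r \leq 2 \ln(\zeta r \kappa_r/\epsilon)$ in the nontrivial regime $\zeta r \kappa_r/\epsilon \geq e$ yields $\log_{1/\rho}(\bar R/\epsilon) = O(\ln(\zeta r \kappa_r/\epsilon))$.

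The remaining bounds then follow by substitution. Plugging into~\eqref{a:N} gives $\bar N \leq \bar s (\log_{1/\rho}(\bar R/\epsilon) + 2) = O(\ln \kappa_r) \cdot O(\ln(\zeta r \kappa_r/\epsilon))$, which is the claimed $O(\ln \kappa_r \ln(\zeta r \kappa_r/\epsilon))$. Taking the logarithm of~\eqref{a:eta} gives
\[
-\ln \bar \eta = -\ln \eta_0 + \bar s \lvert \ln \varrho_{\eta}\rvert + \varsigma \ln\left\lceil \max\left(\log_{1/\rho}\left(\tfrac{\bar R}{\epsilon}\right), 0\right) + 1\right\rceil,
\]
and the right-hand side is $O(\bar s) + O(\ln \ln(\zeta r \kappa_r/\epsilon)) = O(\ln \kappa_r + \ln\ln(\zeta r \kappa_r/\epsilon))$. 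The main obstacle in the entire argument is the calculation for $\bar R$: one has to verify that the $\kappa_r$-dependent factor in its denominator decays no faster than $\kappa_r^{-2}$, so that the ensuing logarithm absorbs the extra power of $\kappa_r$ into the claimed bound. Once this decay estimate is in place, the rest is routine bookkeeping on the logarithms.
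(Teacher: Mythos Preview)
Your proposal is correct and follows essentially the same route as the paper: the key step in both is to show that the $\kappa_r$-dependent factor $1-\sqrt{\kappa_r^2/(\sigma_0^2+\kappa_r^2)}$ in the denominator of $\bar R$ is bounded below by a constant times $\kappa_r^{-2}$, whence $\bar R=O(\zeta r\kappa_r^2)$ and the remaining bounds follow by direct substitution. The paper obtains this via the slightly slicker one-line inequality $\bigl(1-\sqrt{\kappa_r^2/(\sigma_0^2+\kappa_r^2)}\bigr)^{-1}\le 2(1+\kappa_r^2/\sigma_0^2)$, whereas you rationalize and split into cases; both are equivalent and the rest of the bookkeeping is identical.
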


\section{Iteration complexity of proximal method of multipliers}\label{sec:app}
In this section, we apply AGPPA to solve the convex optimization problem:
\begin{equation}\label{prob:constrained}
\min_{x\in \bR^n}f_0 (x)+g (x)+h (Ax).
\end{equation}
Here, $A\in \bR^{m\times n}$, $f_0:\bR^n \rightarrow \bR$ is a convex and differentiable function, and the functions
 $g, h:\bR^n\rightarrow
\bR\cup \{+\infty\}$ are  proper, closed and  convex.  In addition, we assume that $h$ is a simple function, in the sense that its proximal operator can be easily computed. We write down the Lagrangian function as follows:
\begin{align}\label{a:defl}
\ell\bracket{x,\lambda} := f_0 (x) + g (x)+ \langle Ax, \lambda \rangle - h^*\bracket{\lambda},\enspace\forall x\in \bR^n, \lambda\in \bR^m,
\end{align}
where $h^*$ denotes the Fenchel conjugate function of $h$. Let $\cX:=\bR^{n+m}$ and
define the multivalued mapping $T_{\ell}: \cX\rightrightarrows \cX$  associated with the convex-concave function $\ell$ by
\begin{equation}\label{def:Tell}
T_{\ell} ( x, \lambda) := \left\{ (v, u) \in \bR^{n+m}:  (v, -u) \in \partial \ell (x,\lambda) \right\},\enspace\forall x\in \bR^n, \lambda\in \bR^m.
\end{equation}
It is known that $T_{\ell}$ is a maximal monotone operator and the  set $T_{\ell}^{-1} (\zero,\zero)$
is the set of saddle points of the Lagrangian function~\eqref{a:defl}, which then yields the primal and dual optimal solutions of the optimization problem~\eqref{prob:constrained} (see~\cite[Theorem 3.4.1]{LECTURENOTES}).

To apply AGPPA, we shall make the following assumptions on problem~\eqref{prob:constrained}.
\begin{assume}\label{ass:AGPPAF}
\begin{enumerate}
\item $\Omega:=T_{\ell}^{-1} (\zero,\zero)$ is nonempty.
\item The operator $T_{\ell}$
 satisfies the bounded metric subregularity as given in Assumption~\ref{assume:growthcondition}.
 \item  There is a  computable
 residual function $\cE:\cX \rightarrow \bR_+$   satisfying~\eqref{a:cEl} for some $\zeta>0$.
\end{enumerate}
\end{assume}

Based on Section~\ref{subsection:EB}, we know that if $T_{\ell}$ is a polyhedral multifunction, Assumption~\ref{assume:growthcondition} holds.  In addition, if $f_0, g, h$ are piecewise linear-quadratic, $T_{\ell}$ is a  polyhedral multifunction~\cite[Theorem 11.14, Proposition 12.30]{Rockafellar1998Variational}. 
Therefore, our algorithm is applicable to a wide range of convex optimization problems, such as the LP problem, the quadratic programming problem, the LASSO problem and so on. However,  we expect that the AGPPA framework can be applied to solve more examples of convex programming other than piecewise linear-quadratic programs.  
Verifying the bounded metric subregularity~\eqref{growthcondition} for the operator $T_{\ell}$ associated with problem~\eqref{prob:constrained} is  out of the scope of this paper, 
but we point out that the techniques in~\cite{YuanADMM,necoara2019linear} seem to shed light on this challenging task. 

 Under Assumption~\ref{ass:AGPPAF}, Algorithm~\ref{alg:AGPPA} can be applied to find a solution in $\Omega$. In the remaining of this section, we concretize the IGPPA step for  $T_{\ell}$, and
study the total complexity of AGPPA applied to find  an approximate
 solution of $\Omega$.

\subsection{Implementation of the IGPPA step}\label{sec:F}
Let $\cM^x \in S_n^{++}$, $\cM^{\lambda}\in S_m^{++}$ and
\[
\cM = \begin{pmatrix}
\cM^x &0 \\ 0 & \cM^{\lambda} 
\end{pmatrix}\in S_{n+m}^{++}.
\]
Let $\eta>0$ and $\delta>0$.
We fix base points $\bar x\in \bR^n$ and $\bar\lambda\in \bR^m$, and present an implementable form of
the IGPPA step for the maximal monotone operator $T_{\ell}$.
Recall that the IGPPA step amounts to  compute $\tilde x\in \bR^n$ and $\tilde \lambda \in \bR^m$ such that
\begin{equation}\label{GPPA:stop1}
	\norm{ (\tilde x,\tilde \lambda)- \cJ_{\sigma \cM^{-1} T_{\ell}} (\bar x, \bar\lambda)}_{\cM} \le \min \left\{\eta,\delta \norm{ (\tilde x,\tilde \lambda)- (\bar x,\bar\lambda)}_{\cM} \right\}.
	\end{equation}
	Note that~\eqref{GPPA:stop1}  cannot be verified.  
 Let 
\begin{align}\label{a:psi}\psi (u, \bar\lambda, \sigma):=\max_{\lambda \in \bR^m} \left\{
\langle u, \lambda \rangle - h^*\bracket{\lambda}- \frac{1}{2\sigma }\norm{ \lambda-\bar\lambda }_{\cM^{\lambda}}^2
\right\}.
\end{align}
Recall that  $\psi (\cdot,\bar\lambda,\sigma): \bR^m\rightarrow \bR^m$ is known as a  smoothing approximation of the possible nonsmooth function $h$ (see~\cite{Yu2005Smooth}). 
Furthermore, denote
\begin{equation}\label{def:LAMBDA}
\Lambda (x, \bar \lambda, \sigma):=\arg\max_{ \lambda \in \bR^m}  \left\{  \ell ( x, \lambda) - \frac{1}{2\sigma }\norm{ \lambda-\bar\lambda }_{\cM^{\lambda}}^2   \right\}.
\end{equation}
We define
 \begin{equation}\label{eq:defF}
\begin{aligned}
 F (x, \bar x, \bar \lambda, \sigma)&:=\max_{\lambda\in \bR^m}  \left\{  \ell ( x, \lambda) - \frac{1}{2\sigma }\norm{ \lambda-\bar\lambda }_{\cM^{\lambda}}^2   \right\} + \frac{1}{2\sigma } \norm{  x-\bar x}_{\cM^{x}}^2 \\
 &=f_0 ( x)+g ( x)+ \psi (Ax, \bar\lambda, \sigma)+ \frac{1}{2\sigma } \norm{  x- \bar x}_{\cM^{x}}^2 .
\end{aligned}
\end{equation}
In this subsection, since $\sigma>0$ and the base points $\bar x\in \bR^n$ and $\bar\lambda\in \bR^m$ are fixed, we denote for simplicity $F(\cdot):= F (\cdot, \bar x, \bar \lambda, \sigma)$.
The function $F(\cdot)$ is strongly convex  and has a unique minimizer $x^\star$. It is known that~\cite{Rockafellar1976Augmented}
\begin{align}\label{a:dfere}
  (x^{\star}, \Lambda (x^{\star}, \bar \lambda,\sigma))= \cJ_{\sigma \cM^{-1} T_{\ell}} (\bar x,\bar\lambda),
\end{align}
 and  thus the computation of an inexact minimizer of $F$  yields an inexact solution of the
 resolvent operator.
The following proposition is a generalization  of~\cite[Proposition 8]{Rockafellar1976Augmented}.
\begin{proposition}[\cite{Rockafellar1976Augmented}]\label{prop:stopguarantee}
For any $\tilde x\in \dom (F)$, we have
\[
  \norm{ (\tilde x,  \Lambda (\tilde x, \bar \lambda,\sigma) ) - \cJ_{\sigma \cM^{-1} T_{\ell}} (\bar x,\bar\lambda)}_{\cM}  
  \le \frac{\sigma}{ \sqrt{\lambda_{\min}(\cM^x)}} \dist (\zero, \partial F (\tilde x) ).
\]
\end{proposition}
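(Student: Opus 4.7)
Let $(x^\star,\lambda^\star):=\cJ_{\sigma\cM^{-1}T_\ell}(\bar x,\bar\lambda)$. By definition of the resolvent,
$$\bigl(\cM^x(\bar x-x^\star)/\sigma,\,\cM^\lambda(\bar\lambda-\lambda^\star)/\sigma\bigr)\in T_\ell(x^\star,\lambda^\star),$$
and~\eqref{a:dfere} gives $\lambda^\star=\Lambda(x^\star,\bar\lambda,\sigma)$. Fix $\tilde x\in\dom(F)$ and set $\tilde\lambda:=\Lambda(\tilde x,\bar\lambda,\sigma)$. The first step is to identify $\partial F(\tilde x)$ explicitly. Since $\psi(\cdot,\bar\lambda,\sigma)$ is a Moreau-type envelope with unique maximizer, it is differentiable and an envelope/Danskin argument yields $\nabla[\psi(A\cdot,\bar\lambda,\sigma)](\tilde x)=A^{\top}\tilde\lambda$. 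A standard sum rule (valid because $f_0$, the quadratic, and $\psi\circ A$ are smooth) then gives
$$\partial F(\tilde x)=\nabla f_0(\tilde x)+\partial g(\tilde x)+A^{\top}\tilde\lambda+\cM^x(\tilde x-\bar x)/\sigma.$$

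The second step is to produce a concrete element of $T_\ell(\tilde x,\tilde\lambda)$ from any $d\in\partial F(\tilde x)$. Rearranging the display above, $d+\cM^x(\bar x-\tilde x)/\sigma\in\nabla f_0(\tilde x)+\partial g(\tilde x)+A^{\top}\tilde\lambda=\partial_x\ell(\tilde x,\tilde\lambda)$. The optimality condition defining $\tilde\lambda=\Lambda(\tilde x,\bar\lambda,\sigma)$ gives $A\tilde x+\cM^\lambda(\bar\lambda-\tilde\lambda)/\sigma\in\partial h^*(\tilde\lambda)$, i.e.\ $\cM^\lambda(\bar\lambda-\tilde\lambda)/\sigma\in\partial_\lambda(-\ell)(\tilde x,\tilde\lambda)$. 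Recalling the definition of $T_\ell$, this shows
$$v:=d+\cM^x(\bar x-\tilde x)/\sigma,\qquad u:=\cM^\lambda(\bar\lambda-\tilde\lambda)/\sigma$$
satisfy $(v,u)\in T_\ell(\tilde x,\tilde\lambda)$.

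The third step is to invoke monotonicity of $T_\ell$ between $(x^\star,\lambda^\star)$ and $(\tilde x,\tilde\lambda)$. The differences are $v-v^\star=d+\cM^x(x^\star-\tilde x)/\sigma$ and $u-u^\star=\cM^\lambda(\lambda^\star-\tilde\lambda)/\sigma$, so monotonicity together with a short cancellation yields
$$\tfrac{1}{\sigma}\bigl\|(\tilde x,\tilde\lambda)-(x^\star,\lambda^\star)\bigr\|_{\cM}^{2}\le\langle d,\,\tilde x-x^\star\rangle.$$
Finally, Cauchy--Schwarz and $\|\tilde x-x^\star\|\le\|\tilde x-x^\star\|_{\cM^x}/\sqrt{\lambda_{\min}(\cM^x)}\le\|(\tilde x,\tilde\lambda)-(x^\star,\lambda^\star)\|_{\cM}/\sqrt{\lambda_{\min}(\cM^x)}$ give
$$\bigl\|(\tilde x,\tilde\lambda)-(x^\star,\lambda^\star)\bigr\|_{\cM}\le\frac{\sigma\,\|d\|}{\sqrt{\lambda_{\min}(\cM^x)}},$$
and taking the infimum over $d\in\partial F(\tilde x)$ finishes the proof.

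The main subtlety I anticipate is the clean justification of $\nabla\psi(A\tilde x,\bar\lambda,\sigma)=\tilde\lambda$ and hence of the subdifferential formula for $F$; once that differentiability is in hand (which is classical for Moreau envelopes of proper closed convex functions under a quadratic-regularization weighted norm), the rest is a direct monotonicity estimate.
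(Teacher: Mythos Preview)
Your proof is correct and is precisely the natural generalization of Rockafellar's original argument in~\cite[Proposition~8]{Rockafellar1976Augmented} to the preconditioned setting with $\cM$; the paper itself does not give a proof but simply cites that reference, so your write-up is effectively the omitted argument. The one point you flag as a subtlety---the differentiability of $\psi(A\cdot,\bar\lambda,\sigma)$ with gradient $A^\top\Lambda(\cdot,\bar\lambda,\sigma)$---is in fact stated explicitly in the paper as equation~\eqref{eq:nablaf} (via~\cite[Theorem~1]{Yu2005Smooth}), so you may invoke it directly without further justification.
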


We obtain directly from Proposition~\ref{prop:stopguarantee}  the following condition that ensures~\eqref{GPPA:stop1}:
\begin{align}	\label{stop:computable}
         \dist (\zero, \partial F (\tilde x) ) \le  \frac{ \sqrt{\lambda_{\min}(\cM^x)}}{\sigma} \min \left\{ \eta, \delta \norm{
	 \left ( \tilde x, \Lambda \bracket{\tilde x, \bar \lambda, \sigma}\right) - \left ( \bar x, \bar  \lambda\right)}_{\cM} \right\}.
	\end{align}
Hence, the IGPPA step reduces to  solve approximately:
\begin{align}\label{a:mF}
F^\star=\min_{ x\in \bR^n} F (x),
\end{align}
so that~\eqref{stop:computable} is satisfied. In the following, we assume that $ \dist (\zero, \partial F (\tilde x) )$ can be easily computed. This is satisfied  for example when AGPPA is applied to LP problem (see Section~\ref{LP:computable}). Thus, we say the stopping criterion~\eqref{stop:computable} is computable.

\subsection{Iteration complexity of the IGPPA step}\label{sec: ICIGPPA}
In this subsection, we investigate the complexity of finding an approximate 
minimizer of~\eqref{a:mF}
so that the computable stopping criterion~\eqref{stop:computable} is satisfied.  We  decompose the function $F$ defined in~\eqref{eq:defF} into two parts:
\begin{equation}\label{decompose:F}
F ( x)=f ( x)+\phi (x),
\end{equation}
where 
\[
f (x):=f_0 ( x)+ \psi (Ax,  \bar \lambda, \sigma),\enspace \phi ( x):=g ( x)+ \frac{1}{2\sigma } \norm{  x- \bar x}_{\cM^{x}}^2,~\forall x \in \bR^n.
\]
By~\cite[Theorem 1]{Yu2005Smooth}, we have
\begin{equation}\label{eq:nablaf}
\nabla f(x) = \nabla f_0(x) + A^{\top} \Lambda (x, \bar \lambda, \sigma),
\end{equation}
where $\Lambda$ is defined  in~\eqref{def:LAMBDA} and has concrete form as
\begin{equation}\label{eq:Lambdaconcrete}
 \Lambda (x, \bar \lambda, \sigma) = \cJ_{\sigma \cM_{\lambda}^{-1} \partial h^*} \bracket{\sigma \cM_{\lambda}^{-1} Ax + \bar \lambda}.
\end{equation}
We require the following additional assumption on the function $f_0$.
\begin{assume}\label{ass:f0L}
The gradient $\nabla f_0:\bR^n\rightarrow \bR^n$ is $L_0$-Lipschitz continuous.
\end{assume}
Then, it is easy to see that $\nabla f$ is $L$-Lipschitz continuous with
\begin{equation}\label{eq:Lforf}
L := L_0 + \frac{\sigma}{\lambda_{\min}(\cM^\lambda)} \norm{A}^2,
\end{equation}
and thus function $F$ can be decomposed into smooth function $f$ with $L$-Lipschitz continuous gradient and possibly non-smooth function $\phi$ that is strongly convex.

Our analysis is based on the existence of a globally convergent minimizer of $F$ that satisfies  the \textit{homogenous objective decrease}  (HOOD) property~\cite{NIPS2016_6364}. More precisely, we require the following assumption.
\begin{assume}\label{assume:Hood}
There is  an algorithm $\cA_F: \dom (g)\rightarrow \dom (g)$, which,  in a fixed number of operations,  returns  an   output  satisfying:
 \begin{align}\label{a:erd}
\bE[F (\cA_F (x))-F^\star] \leq e^{-1}\bracket{F\bracket{x}-F^\star},\enspace \forall x\in \dom (g).
\end{align}
\end{assume}

There are numerous first-order methods which can be applied to solve problem~\eqref{a:mF} with linear convergence and thus satisfy Assumption~\ref{assume:Hood},  including the proximal gradient method~\cite{Nesterov2013Gradient} and its accelerated versions~\cite{Nesterov2013Gradient,beck2009fast,tseng2008accelerated,jiang2012inexact,li2015accelerated}.  If the function $\phi(x)$ can be separated as $\phi(x) \equiv \sum_{i =1}^n \phi_i(x_i)$, then  the randomized coordinate descent (RCD) methods~\cite{nesterov2012efficiency,richtarik2016parallel,alacaoglu2017smooth} or the accelerated versions~\cite{lin2015accelerated,fercoq2015accelerated,Fercoq2018Restarting} are applicable. If the function $f(x) \equiv \frac{1}{m} \sum_{i = 1}^m f_i(x)$,  then the stochastic gradient descent (SGD) algorithms~\cite{schmidt2017minimizing,defazio2014saga,zinkevich2010parallelized} or the accelerated versions~\cite{nitanda2016accelerated,johnson2013accelerating,Allen2016Katyusha} are preferable. Thus, the inner solver  to solve problem~\eqref{a:mF} can be a randomized algorithm, which motivates us to require~\eqref{a:erd} in expectation.

The following two challenges are raised:
\begin{enumerate}
\item A sequence $\{ x^k \}_{k \ge 0}$ converging to the unique solution point of the problem~\eqref{a:mF}  does not necessarily satisfy $$\lim \inf_{k \rightarrow \infty } \dist \bracket{ {\bf 0}, \partial F(x^k)} \rightarrow 0.$$
\item  
First-order methods may fail to give  solution with high accuracy in practice. In addition, first-order methods  may suffer from slow convergence for ill-conditioned problems, especially when the proximal regularization parameter becomes large.
\end{enumerate}

Define the \textit{proximal gradient operator}
\begin{equation}\label{eq:ProximalGradientOperator}
\cG_F ( x) :=\arg\min_{y \in \bR^n}\left\{\frac{L}{2}\left\|y-\left (x-\frac{1}{L}\nabla f ( x)\right)\right\|^2 +\phi ( y) \right\},\enspace \forall x\in \bR^n.
\end{equation}
To overcome the first challenge, we apply the operator $\cG_F$ at each iteration of $\cA_{F}$ since if $\left\{x^{k}\right\}_{k \ge 0} \rightarrow x^{\star}$, then $ \dist \bracket{ {\bf 0}, \partial F(\cG_F(x^k))} \rightarrow 0$. The details will be shown  below. To overcome the second challenge, we propose a hybrid way to use any numerically efficient algorithm for the minimization
problem~\eqref{a:mF} (see Section~\ref{subsection:hybrid}).

We propose Algorithm~\ref{AlgAlg:APPROX} for finding $\tilde x$ satisfying~\eqref{stop:computable}.
 \renewcommand{\thealgorithm}{2}
\begin{algorithm}
	\caption{}
	\label{APPROX}
	\begin{algorithmic}[1]
	\Ensure $x^0 = \cG_F (\bar x)$, $k = 0$
	\Repeat 
	\State $y^k = x^k$
	\State $y^{k+1} \leftarrow \cA_F (y^k)$
	\If{$F (y^{k+1}) > F (y^k)$}
	  \State $y^{k+1} = y^k$
	\EndIf
	\State $ x^{k+1}=\cG_F (y^{k+1})$
	\State $k = k+1$
	\Until{ $ \dist (\zero, \partial F ( x^k) ) \leq  \frac{ \sqrt{\lambda_{\min}(\cM^x)}}{\sigma}  \min \left\{ \eta, \delta \norm{
	 \left (x^k, \Lambda \bracket{x^k, \bar \lambda,\sigma}\right) - \left ( \bar x, \bar \lambda\right)}_{\cM} \right\}$ }
	 \State $\tilde x =  x^k$
	\Require $ \tilde x$
	\end{algorithmic}
	\label{AlgAlg:APPROX}
\end{algorithm}
Note that~\eqref{a:erd} only holds in expectation and thus in general we 
can not guarantee $F (\cA_F (y^k)) \leq F (y^k)$. To ensure monotone decrease, we always compare 
$F (\cA_F (y^k))$ with $F (y^k)$, and let $y^{k+1}$ be $y^k$ if $F (\cA_F (y^k))> F (y^k)$. 
In Algorithm~\ref{APPROX}, we also need to apply the operator $\cG_F$ at each iteration. The motivation 
comes from the following result.
\begin{proposition}\label{prop:fdd}
Let $y\in \bR^n$ be a vector satisfying
\begin{equation}\label{hungry_stop1}
F (y) - F^\star \le  \frac{\lambda_{\min}(\cM^x)}{\sigma^2}\min \left\{ \frac{\eta^2}{4 L },  \frac{\delta^2\lambda_{\min}(\cM)}{2 (1+\delta)^2 L^2} \left ( F (\cG_F (\bar x)) - F^\star \right) \right\}.
\end{equation}
Then, the stopping criterion~\eqref{stop:computable}   holds with $\tilde x=\cG_F (y)$.
\end{proposition}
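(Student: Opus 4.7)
Set $\tilde x := \cG_F(y)$ and $\tilde\lambda := \Lambda(\tilde x,\bar\lambda,\sigma)$. The plan is first to bound $\dist(\zero,\partial F(\tilde x))$ in terms of $F(y)-F^\star$ using only properties of the proximal gradient step, and then to discharge the two entries of the $\min$ in~\eqref{stop:computable} in turn. The optimality condition defining $\cG_F(y)$ reads $L(y-\tilde x)-\nabla f(y)\in\partial\phi(\tilde x)$, which gives
\[
\nabla f(\tilde x)-\nabla f(y)+L(y-\tilde x)\in\partial F(\tilde x).
\]
Expanding the squared norm of this subgradient and invoking co-coercivity of $\nabla f$ (which holds because $f$ is convex and $L$-smooth) produces $\dist(\zero,\partial F(\tilde x))\le L\norm{y-\tilde x}$. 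Separately, since $\cG_F(y)$ minimizes the $L$-strongly convex surrogate $Q_L(\cdot;y)$ whose value at $y$ equals $F(y)$, the descent lemma combined with $F(\tilde x)\ge F^\star$ gives $\norm{y-\tilde x}^2\le\tfrac{2}{L}\bracket{F(y)-F^\star}$. Combining yields the key inequality
\[
\dist(\zero,\partial F(\tilde x))^2\le 2L\bracket{F(y)-F^\star}.
\]

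For the first entry of the $\min$, the hypothesis $F(y)-F^\star\le\tfrac{\lambda_{\min}(\cM^x)\eta^2}{4L\sigma^2}$ immediately implies $\dist(\zero,\partial F(\tilde x))\le\tfrac{\sqrt{\lambda_{\min}(\cM^x)}}{\sigma}\eta$, as required. For the $\delta$-entry, whose right-hand side depends on $\tilde x$, I would first bound $F(\cG_F(\bar x))-F^\star$ in terms of the exact resolvent output $(x^\star,\lambda^\star):=\cJ_{\sigma\cM^{-1}T_\ell}(\bar x,\bar\lambda)$ by evaluating $Q_L(\cdot;\bar x)$ at the minimizer $x^\star$ of $F$ and using convexity of $f$, obtaining
\[
F(\cG_F(\bar x))-F^\star\le\tfrac{L}{2}\norm{\bar x-x^\star}^2\le\tfrac{L}{2\lambda_{\min}(\cM^x)}\norm{(x^\star,\lambda^\star)-(\bar x,\bar\lambda)}_{\cM}^2.
\]

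I then apply the triangle inequality $\norm{(x^\star,\lambda^\star)-(\bar x,\bar\lambda)}_{\cM}\le\norm{(\tilde x,\tilde\lambda)-(\bar x,\bar\lambda)}_{\cM}+\norm{(\tilde x,\tilde\lambda)-(x^\star,\lambda^\star)}_{\cM}$ and control the second summand by $\tfrac{\sigma}{\sqrt{\lambda_{\min}(\cM^x)}}\dist(\zero,\partial F(\tilde x))$ via Proposition~\ref{prop:stopguarantee}. Inserting $\dist(\zero,\partial F(\tilde x))^2\le 2L(F(y)-F^\star)$ together with the second entry of the hypothesis into this chain produces a self-referential inequality in $\dist(\zero,\partial F(\tilde x))$ and $\norm{(\tilde x,\tilde\lambda)-(\bar x,\bar\lambda)}_{\cM}$; solving it yields the desired $\dist(\zero,\partial F(\tilde x))\le\tfrac{\sqrt{\lambda_{\min}(\cM^x)}\delta}{\sigma}\norm{(\tilde x,\tilde\lambda)-(\bar x,\bar\lambda)}_{\cM}$. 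The main technical obstacle is precisely this last step: because $\dist(\zero,\partial F(\tilde x))$ reappears on the right-hand side through the triangle inequality, the constant $\tfrac{1}{2(1+\delta)^2}$ in the hypothesis is chosen exactly so that, after using the elementary bound $\lambda_{\min}(\cM)\le\lambda_{\min}(\cM^x)$ and rearranging, the coefficient on $\norm{(\tilde x,\tilde\lambda)-(\bar x,\bar\lambda)}_{\cM}$ collapses to $\delta$.
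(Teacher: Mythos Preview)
Your proposal is correct and follows essentially the same route as the paper's proof: bound $\dist(\zero,\partial F(\cG_F(y)))$ via the proximal-gradient optimality condition and the descent inequality, bound $F(\cG_F(\bar x))-F^\star$ by $\tfrac{L}{2}\norm{\bar x-x^\star}^2$, invoke Proposition~\ref{prop:stopguarantee}, and close with a triangle-inequality self-reference that the factor $(1+\delta)^{-2}$ in the hypothesis is designed to absorb. One small refinement you add is the use of co-coercivity of $\nabla f$ to obtain $\dist(\zero,\partial F(\tilde x))\le L\norm{y-\tilde x}$ rather than the cruder $\sqrt{2}L\norm{y-\tilde x}$ the paper uses, which leaves you a harmless $\sqrt{2}$ slack (so your claim that the constant is chosen ``exactly'' is tight for the paper's bound, not yours). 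The paper also organizes the closing step slightly differently---it first bounds $\norm{(\tilde x,\tilde\lambda)-\cJ_{\sigma\cM^{-1}T_\ell}(\bar x,\bar\lambda)}_\cM$ by $\tfrac{\delta}{1+\delta}\norm{(\bar x,\bar\lambda)-\cJ_{\sigma\cM^{-1}T_\ell}(\bar x,\bar\lambda)}_\cM$ and then applies the triangle inequality, whereas you apply the triangle inequality first and solve for $\dist(\zero,\partial F(\tilde x))$---but these are algebraically equivalent rearrangements of the same ingredients.
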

\begin{proof} 
 By~\eqref{a:dfere},   for any $ (x,\lambda)$ we have,
\begin{align}\label{a:tidf}
\norm{ x- x^{\star}}\leq  \norm{ (x,\lambda)- (x^{\star},\Lambda (x^{\star},\bar\lambda,\sigma)) } = \norm{ ( x, \lambda)-   \cJ_{\sigma \cM^{-1} T_{\ell}} (\bar x,\bar\lambda)  } .
\end{align}
We recall two properties about the proximal gradient operator~\cite[(2.13),  (2.21)]{Nesterov2013Gradient}:
\begin{align}\label{hungry_eq11}
&\frac{L}{2} \norm{\cG_F (y) - y}^2 \le F (y) -F^\star, \\
&F (\cG_F (\bar x)) - F^\star  \le \frac{L}{2} \norm{\bar x- x^{\star}}^2 \overset{\eqref{a:tidf}}\le \frac{L}{2}\norm{ ( \bar x, \bar \lambda)-   \cJ_{\sigma \cM^{-1} T_{\ell}} (\bar x,\bar\lambda)  }^2 .\label{hungry_eq12}
 \end{align}
Note that we have ${\bf 0} \in  \nabla f (y)   + L (\cG_F (y) -y) + \partial \phi (\cG_F (y))$ with~\eqref{eq:ProximalGradientOperator}, thus
\[
 \nabla f (\cG_F (y))-\nabla f (y) -L (\cG_F (y) -y)  \in \partial F ( \cG_F (y)).
\]
Then we have
\[\begin{aligned}
 &\dist ( {\bf 0}, \partial F (\cG_F (y)) )^2 {\le} 
 \norm{\nabla f (\cG_F (y))-\nabla f (y) -L (\cG_F (y) -y)}^2  \\
 &\le  2L^2 \norm{\cG_F (y) -y}^2  
 \overset{\eqref{hungry_eq11}}{\le}
 4L  (F (y) -F^\star)\\
&\overset{\eqref{hungry_stop1}}{\le} \frac{\lambda_{\min}(\cM^x)}{\sigma^2 } \min \left\{ \eta^2,  \frac{2\delta^2\lambda_{\min}(\cM)}{ (1+\delta)^2  L} \left ( F (\cG_F ( \bar x)) - F^\star \right) \right\},  \\
&\overset{\eqref{hungry_eq12}}{\le} \frac{\lambda_{\min}(\cM^x)}{\sigma^2 }
\min \left\{ \eta^2,\frac{\delta^2\lambda_{\min}(\cM)}{ (1+\delta)^2}  \norm{ (\bar x,\bar\lambda)-   \cJ_{\sigma \cM^{-1} T_{\ell}} (\bar x,\bar\lambda)  }^2 \right\}\\
&\le \frac{\lambda_{\min}(\cM^x)}{\sigma^2 } \min \left\{ \eta^2,\frac{\delta^2 }{ (1+\delta)^2} \norm{ (\bar x,\bar\lambda)-   \cJ_{\sigma \cM^{-1} T_{\ell}} (\bar x,\bar\lambda)  }_\cM^2 \right\},
\end{aligned}\]
and therefore,
\begin{equation}\label{a:erdvcv}
\begin{aligned}
&\dist  (\zero, \partial F (\tilde x)) \\
&\le \frac{\sqrt{\lambda_{\min}(\cM^x)}}{\sigma} \min \left\{ \eta ,\frac{\delta }{1+\delta}   \norm{ (\bar x,\bar\lambda)-  \cJ_{\sigma \cM^{-1} T_{\ell}} (\bar x,\bar\lambda)  }_\cM\right\}.
\end{aligned}
\end{equation}
In view of Proposition~\ref{prop:stopguarantee}, we have
\begin{align*}
 &\norm{ (\tilde x,  \Lambda (\tilde x,\bar\lambda,\sigma) ) - \cJ_{\sigma \cM^{-1} T_{\ell}} (\bar x,\bar\lambda)}_{\cM} \le \frac{\sigma}{\sqrt{\lambda_{\min}(\cM^x)}} \dist \bracket{0, \partial F (\tilde x)} \\
 & \overset{\eqref{a:erdvcv}}\le \min \left\{ \eta, \frac{\delta}{1+\delta}  \norm{ (\bar x,\bar\lambda)-  \cJ_{\sigma \cM^{-1} T_{\ell}} (\bar x,\bar\lambda)  }_\cM \right\}  
\end{align*}
It follows that
\begin{align*}
&\norm{ (\tilde x,  \Lambda (\tilde x, \bar x, \sigma) ) - \cJ_{\sigma \cM^{-1} T_{\ell}} (\bar x,\bar\lambda)}_{\cM} 
\\& \leq
\frac{\delta}{1+\delta} \left ( \norm{ (\tilde x,\Lambda (\tilde x, \bar x,\sigma))-   (\bar x,\bar\lambda)}_\cM+ \norm{ (\tilde x, 
\Lambda (\tilde x, \bar x,\sigma))-  \cJ_{\sigma \cM^{-1} T_{\ell}} (\bar x,\bar\lambda)  }_\cM \right),
\end{align*}
and thus
$$
\norm{ (\tilde x,  \Lambda (\tilde x, \bar x,\sigma) ) - \cJ_{\sigma \cM^{-1} T_{\ell}} (\bar x,\bar\lambda)}_{\cM} 
\leq \delta \norm{ (\tilde x,\Lambda (\tilde x, \bar x,\sigma))-   (\bar x,\bar\lambda)}_\cM.
$$
Consequently, 
\begin{align}\label{a:erere}
\norm{ (\bar x,\bar\lambda)-  \cJ_{\sigma \cM^{-1} T_{\ell}} (\bar x,\bar\lambda)  }_\cM \leq 
 (1+\delta)\norm{ (\tilde x,\Lambda (\tilde x, \bar x,\sigma))-   (\bar x,\bar\lambda)}_\cM.
\end{align}
Plugging~\eqref{a:erere} into~\eqref{a:erdvcv}, we get
\[
\dist  (\zero, \partial F (\tilde x)) \le \frac{\sqrt{\lambda_{\min}(\cM^x)}}{\sigma}\min \left\{ \eta, \delta \norm{ (\tilde x,\Lambda (\tilde x, \bar x,\sigma))-   (\bar x,\bar\lambda)}_\cM\right\}.
\]
\end{proof}

Now we give an upper bound on the number of iterations before Algorithm~\ref{APPROX} terminates.
\begin{theorem}\label{thm:subcomp} Let Assumption~\ref{ass:f0L} and Assumption~\ref{assume:Hood} hold. For any $0<p<1$, denote 
\begin{align}\label{a:Kpf}
 K (p) := \left\lceil \max \left ( \ln \bracket{\zeta_1\bracket{ \frac{L\sigma \dist\bracket{ (\bar x,\bar\lambda), \Omega} }{\eta \sqrt{p}}}}, \ln \bracket{\zeta_2\bracket{\frac{L\sigma}{\sqrt{p}}}},0 \right)\right\rceil,
 \end{align}
 with functions $\zeta_1, \zeta_2: \bR^{+} \rightarrow \bR^{+}$ defined as:
 \[
\zeta_1(q):=\frac{2q^2}{\lambda_{\min}(\cM^x)},~\zeta_2(q): = \frac{2 (1+\delta)^2 q^2}{\delta^2\lambda_{\min}(\cM^x)\lambda_{\min}(\cM)}.
 \]
Then, with probability  at least $1-p$, Algorithm~\ref{APPROX} terminates within $K (p)$ iterations.
\end{theorem}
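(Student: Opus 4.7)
The plan is to combine the HOOD contraction of $\cA_F$ with Markov's inequality, and then invoke Proposition~\ref{prop:fdd} to convert a bound on the objective gap $F(y^k)-F^\star$ into the computable stopping criterion~\eqref{stop:computable}. First I would establish the in-expectation geometric decrease $\mathbb{E}[F(y^k)-F^\star]\le e^{-k}(F(y^0)-F^\star)$. The monotone guard in Algorithm~\ref{APPROX} gives $F(y^{k+1})\le F(\cA_F(y^k))$ pointwise, so conditioning on $y^k$ and applying Assumption~\ref{assume:Hood} yields $\mathbb{E}[F(y^{k+1})-F^\star\mid y^k]\le e^{-1}(F(y^k)-F^\star)$; iterating by the tower property produces the claim.

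Second, I would bound the initial gap $F(y^0)-F^\star=F(\cG_F(\bar x))-F^\star$ in terms of $\dist((\bar x,\bar\lambda),\Omega)$. Inequality~\eqref{hungry_eq12} already gives $F(\cG_F(\bar x))-F^\star\le \tfrac{L}{2}\|(\bar x,\bar\lambda)-\cJ_{\sigma\cM^{-1}T_\ell}(\bar x,\bar\lambda)\|^2$ (in the unweighted norm, after bounding $\|\bar x-x^\star\|$ by the full-space distance). Using that the resolvent is nonexpansive with respect to $\|\cdot\|_\cM$ and fixes every element of $\Omega$, a triangle-inequality argument against an arbitrary $z^\star\in\Omega$ produces $\|(\bar x,\bar\lambda)-\cJ_{\sigma\cM^{-1}T_\ell}(\bar x,\bar\lambda)\|\le c\,\dist((\bar x,\bar\lambda),\Omega)$ for a user-independent constant $c$ depending only on $\lambda_{\min}(\cM)$. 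Hence $F(y^0)-F^\star\le c'L\,\dist^2((\bar x,\bar\lambda),\Omega)$.

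Third, I would apply Markov's inequality at iteration $k$ to get, with probability at least $1-p$, $F(y^k)-F^\star\le e^{-k}(F(y^0)-F^\star)/p$. By Proposition~\ref{prop:fdd}, it suffices for termination that this be below the two-part threshold $\tfrac{\lambda_{\min}(\cM^x)}{\sigma^2}\min\{\eta^2/(4L),\,\delta^2\lambda_{\min}(\cM)(F(\cG_F(\bar x))-F^\star)/(2(1+\delta)^2 L^2)\}$. Matching the Markov bound to each part separately and solving for $k$ gives two requirements: for the $\eta$-part the initial-gap factor does not cancel, and substituting the bound from the previous step produces $k\ge \ln\zeta_1(L\sigma\,\dist((\bar x,\bar\lambda),\Omega)/(\eta\sqrt p))$; for the $\delta$-part the factor $F(\cG_F(\bar x))-F^\star$ cancels algebraically, leaving $k\ge \ln\zeta_2(L\sigma/\sqrt p)$. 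The ceiling of the maximum of these two quantities is exactly $K(p)$ in~\eqref{a:Kpf}.

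The most delicate step is the first one: because $\cA_F$ is only contractive in expectation while the monotone guard involves a pathwise $\min$, one must be careful to argue that $\mathbb{E}[F(y^{k+1})-F^\star]$ still contracts (using $F(y^{k+1})\le F(\cA_F(y^k))$ before taking expectations, rather than the reverse). Once the expectation bound is in hand, the remainder is a clean application of Markov and the algebraic identification of the two $\zeta$ functions, and no union bound over iterations is needed because Markov is applied only at the target iterate.
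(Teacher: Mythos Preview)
Your plan matches the paper's proof almost exactly: the HOOD contraction under the monotone guard gives $\bE[F(x^k)-F^\star]\le e^{-k}(F(\cG_F(\bar x))-F^\star)$, Markov's inequality at the single target iterate converts this to a high-probability bound, and Proposition~\ref{prop:fdd} then closes the loop. Your treatment of the guard (apply $F(y^{k+1})\le F(\cA_F(y^k))$ pointwise \emph{before} taking expectation) is precisely what the paper does implicitly.

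The one place you are looser than the paper is the bound on $\|(\bar x,\bar\lambda)-\cJ_{\sigma\cM^{-1}T_\ell}(\bar x,\bar\lambda)\|$. A triangle inequality plus mere nonexpansiveness of the resolvent yields a constant $c\ge 2$ (at best $1+1/\sqrt{\lambda_{\min}(\cM)}$), which would replace $\zeta_1$ by $c^2\zeta_1$ and hence not reproduce $K(p)$ exactly as stated. The paper instead invokes Proposition~\ref{prop:useful}~\ref{third} directly: with $z'\in\Omega$ one has $Q_{\sigma\cM^{-1}T_\ell}(z')=0$, so the firm-nonexpansiveness identity gives $\|Q_{\sigma\cM^{-1}T_\ell}(z)\|_\cM\le\|z-z'\|_\cM$ in one stroke, and minimising over $z'\in\Omega$ yields the required bound without the extraneous additive factor. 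Swap your triangle-inequality step for this firm-nonexpansiveness argument and the constants line up.
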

\begin{proof}
Since $F (\cG_F (y^{k+1}))\leq F (y^{k+1})$ for any $k\geq 0$, it is obvious that
$$
\bE[F (x^k)-F^\star]\leq e^{-k} \left (F (x^0)-F^\star\right)=e^{-k} \left (F (\cG_F (\bar x))-F^\star\right), \enspace \forall k\geq 0.
$$
If 
$$
k\geq  \ln \bracket{\zeta_2\bracket{\frac{L\sigma}{\sqrt{p}}}} 
,
$$
then
\begin{equation}\label{a:etsaa}
\begin{aligned}
&\bE[F (x^k)-F^\star] \leq \frac{1}{ \zeta_2\bracket{L\sigma/\sqrt{p}}} \left ( F (\cG_F (\bar x)) - F^\star \right)
.
\end{aligned}
\end{equation}
We apply~\eqref{hungry_eq12} to obtain
$$
\bE[F (x^k)-F^\star]\leq\frac{L}{2e^k}\norm{ (\bar x,\bar\lambda)-   \cJ_{\sigma \cM^{-1} T_{\ell}} (\bar x,\bar\lambda)  }^2, \enspace \forall k\geq 0.
$$
By Proposition~\ref{prop:useful}~\ref{third},
 we have
\[
\norm{ (\bar x,\bar\lambda)-   \cJ_{\sigma \cM^{-1} T_{\ell}} (\bar x,\bar\lambda)  } \leq \dist \bracket{ (\bar x,\bar\lambda), \Omega }.
\]
Therefore,
\[
\bE[F (x^k)-F^\star]\leq\frac{L}{2e^k} \dist ^2\bracket{ (\bar x,\bar\lambda), \Omega}, \enspace \forall k\geq 0.
\]
It follows that if 
$$k\geq \ln \bracket{\zeta_1\bracket{\frac{L\sigma \dist\bracket{ (\bar x,\bar\lambda), \Omega}}{ \eta \sqrt{p}}}} = \ln  \frac{2L^2\sigma^2 \dist^2\bracket{ (\bar x,\bar\lambda), \Omega} }{p \eta^2\lambda_{\min}(\cM^x)}, $$
then
\begin{equation}\label{eq:smile1}
\bE[F (x^k)-F^\star]\leq \frac{p\eta^2\lambda_{\min}(\cM^x)}{4\sigma^2 L }.
\end{equation}
Based on~\eqref{a:etsaa} and~\eqref{eq:smile1}, we know that for any $k\geq K (p)$,
\[\begin{aligned}
\bE[F (x^k)-F^\star] \leq \min \left \{  \frac{p\eta^2\lambda_{\min}(\cM^x)}{4\sigma^2 L },  \frac{1}{ \zeta_2\bracket{L\sigma / \sqrt{p}}} \left ( F (\cG_F ( \bar x)) - F^\star \right)\right \}.
\end{aligned}\]
Using Markov's inequality we deduce that for any $k\geq K (p)$, with probability at least $1-p$, we have
\[F (x^k)-F^\star \leq  \min \left \{ \frac{\eta^2\lambda_{\min}(\cM^x)}{4\sigma^2 L },  \frac{\delta^2\lambda_{\min}(\cM^x)\lambda_{\min}(\cM)}{2 (1+\delta)^2 L^2\sigma^2} \left ( F (\cG_F ( \bar x)) - F^\star \right)\right \}.
\]
Now it suffices to apply Proposition~\ref{prop:fdd} and the result is proved.
\end{proof}

\begin{remark}\label{rm:ic}
At each iteration of Algorithm~\ref{APPROX}, it is required  to run the following operations:
\begin{enumerate}
\item Computation of $\cA_F (y^k)$;
\item Computation of $F (y^{k+1})$;
\item Computation of $\cG_F (y^{k+1})$;
\item Computation of $\dist (\zero, \partial F (x^k))$;
\item Computation of $\Lambda (x^k, \bar x, \sigma)$ and of $\norm{
	 \left ( x^k, \Lambda \bracket{x^k, \bar x,\sigma}\right) - \left (\bar x, \bar \lambda\right)}_{\cM} $.
\end{enumerate}
\end{remark}
The  per-iteration cost of Algorithm~\ref{APPROX} depends on the  function $F$ defined in~\eqref{decompose:F}, the condition number of which increases with the parameter $\sigma$. It is  reasonable to assume that the per-iteration cost  can be upper bounded by $\cT (\sigma)$, for some increasing function $\cT:\bR_+\rightarrow \bR_+$.
\begin{theorem}\label{thm:subcomp2}
Let  Assumption~\ref{ass:f0L} and Assumption~\ref{assume:Hood} hold and $\cT:\bR_+\rightarrow \bR_+$ be an increasing function such that the per-iteration cost  of Algorithm~\ref{APPROX} is bounded by $\cT (\sigma)$. Then, with probability at least $1-p$, 
the complexity of Algorithm~\ref{APPROX}  is bounded by 
 \begin{align}\label{a:K} O (K (p)\cT (\sigma)),\end{align}
with $K (p)$ defined in Theorem~\ref{thm:subcomp}.
\end{theorem}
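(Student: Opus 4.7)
The plan is to obtain Theorem~\ref{thm:subcomp2} as a direct combination of Theorem~\ref{thm:subcomp}, which bounds the iteration count of Algorithm~\ref{APPROX}, together with the hypothesis that every iteration costs at most $\cT(\sigma)$ elementary operations. Since the total complexity of a procedure equals the number of iterations times the cost per iteration, the statement reduces to chaining these two bounds.

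More explicitly, I would proceed in three steps. First, I would invoke Theorem~\ref{thm:subcomp} to conclude that the random stopping time $K$ of Algorithm~\ref{APPROX} satisfies $\bP[K \le K(p)] \ge 1-p$, where $K(p)$ is the deterministic quantity defined in~\eqref{a:Kpf}. Second, I would argue that the per-iteration cost is bounded by $\cT(\sigma)$ deterministically, by going through the five tasks listed in Remark~\ref{rm:ic}: one call to $\cA_F$ is by Assumption~\ref{assume:Hood} of fixed cost, the function/gradient evaluations of $F$ and the proximal gradient step $\cG_F$ depend on $A$, $\nabla f_0$, and the proximal operator of $g$, while the computation of $\Lambda(x^k,\bar\lambda,\sigma)$ via~\eqref{eq:Lambdaconcrete} reduces to evaluating the proximal operator of $h^*$; all these costs are monotone in $\sigma$ (through $L$ and the regularization term) and so can be absorbed into a single nondecreasing function $\cT(\sigma)$. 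Third, multiplying the two bounds yields the complexity estimate $O(K(p)\cT(\sigma))$ on the event $\{K \le K(p)\}$, which has probability at least $1-p$.

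There is no real obstacle in this proof: the work has already been done in establishing Theorem~\ref{thm:subcomp}, and the assumption that the per-iteration cost is bounded by $\cT(\sigma)$ is taken as given in the statement, so nothing beyond a multiplication and a reference to the earlier theorem is required. The only subtle point worth making explicit is that the high-probability bound on the total complexity inherits its probability $1-p$ solely from Theorem~\ref{thm:subcomp}, since the per-iteration bound is deterministic; I would simply state this explicitly rather than invoke a union bound.
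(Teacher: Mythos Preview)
Your proposal is correct and matches the paper's approach: the paper gives no explicit proof for Theorem~\ref{thm:subcomp2}, treating it as an immediate consequence of Theorem~\ref{thm:subcomp} together with the hypothesis on the per-iteration cost. Your observation that the probability comes solely from Theorem~\ref{thm:subcomp} (since the per-iteration bound is deterministic) is exactly right, and your middle paragraph justifying the per-iteration bound via Remark~\ref{rm:ic} is unnecessary here since that bound is assumed in the statement.
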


\subsection{Hybrid inner solver}\label{subsection:hybrid}
In this subsection, we solve the second challenge posed in Section~\ref{sec: ICIGPPA} that the first-order methods may fail to give an approximate  solution $\tilde x$ to problem~\eqref{a:mF} satisfying~\eqref{stop:computable}  efficiently.
 There are many algorithms which  have high efficiency in practice to find  a  required $\tilde x$. For example, second-order methods 
 have local superlinear convergence property, and if the starting point is sufficiently 
close to the optimal solution, the convergence  is  very fast. However, algorithms with fast local convergence may fail to satisfy the HOOD property~\eqref{a:erd},  and thus  
adds inherent difficulty to the  complexity analysis.  Next, we discuss how to
  bridge the gap between the practical performance 
and the complexity analysis by
combining
Algorithm~\ref{APPROX} with any numerically efficient algorithm for the minimization
problem~\eqref{a:mF}.

Let $\cQ_F$ be any algorithm which is likely to  be more efficient than  Algorithm~\ref{APPROX}
 for solving~\eqref{a:mF}.
When extra computation resource is available, we can run 
 Algorithm~\ref{APPROX} in parallel with $\cQ_F$, and stop it when whichever returns a
 solution first. In this way, the iteration complexity is always upper bounded by~\eqref{a:K}, but the
 actual running time is  smaller than both  that of $\cQ_F$ and of Algorithm~\ref{APPROX}.
When there is no extra computation resource, we can first run $\cQ_F$, and switch it to  Algorithm~\ref{APPROX}  
if no solution satisfying~\eqref{stop:computable} is found within a limited number of elementary operations.   We describe this idea formally in Algorithm~\ref{cQ_cAcG}.
 \renewcommand{\thealgorithm}{3}
\begin{algorithm}
	\caption{
	}
	\label{cQ_cAcG}
	\textbf{Parameters:} $\cJ>0$
	\begin{algorithmic}[1]
	\Ensure $x^0 = \cG_F(\bar x), \bar x,~k = 0,~\cJ_0=\cJ 
$
	\Repeat 
	\State $ \bracket{ x^{k+1}, \cD} = \cQ_F\bracket{x^k, \cJ_{k}}$ \label{line:2}
	\State $\cJ_{k+1} = \cJ_k- \cD$
	\State $k=k+1$
	\Until{ $ \dist (\zero, \partial F ( x^k) ) \leq  \frac{ \sqrt{\lambda_{\min}(\cM^x)}}{\sigma}  \min \left\{ \eta, \delta \norm{
	 \left (x^k, \Lambda \bracket{x^k, \bar \lambda,\sigma}\right) - \left ( \bar x, \bar \lambda\right)}_{\cM} \right\}$  {\bf or} $\cJ_{k}\leq 0 $ }
	\If{$\cJ_k\leq 0$}
	\State Run Algorithm~\ref{APPROX} and obtain $\tilde x$
	\Else
	\State $\tilde x = x^k $
	\EndIf  
	\Require $ \tilde x$
	\end{algorithmic}
\end{algorithm}

 It should be emphasized that  there is no constraint on the choice of the algorithm $\cQ_F$. 
In principle, we opt for an algorithm, which, according to empirical observation,  is likely to require less computational time than Algorithm~\ref{APPROX}. We set a maximum number 
of elementary operations $\cJ$ allowed for running algorithm $\cQ_F$. 
The step in Line~\ref{line:2} of Algorithm~\ref{cQ_cAcG}  
corresponds to one iteration of the algorithm $\cQ_F$. The second output $\cD$ 
records the number of elementary operations used in this iteration.  
It is obvious that as long as we set $\cJ\leq O (K (p) \cT (\sigma))$, the complexity of Algorithm~\ref{cQ_cAcG}
is also upper bounded by~\eqref{a:K} with probability at least $1-p$.  
In conclusion, Algorithm~\ref{cQ_cAcG}  may benefit from 
the  fast convergence of $\cQ_F$ in practice, while  maintaining the same complexity bound as Algorithm~\ref{APPROX}. The remaining question is how to set $\cJ$. We can set
\begin{equation}\label{eq:cJsetting}
\cJ = O \bracket{ \max \left\{ \ln \bracket{ \zeta_{2} \bracket{\frac{L\sigma}{\sqrt{ p'}}}}, 0 \right\} \cT(\sigma) },
\end{equation}
with $p'$ computable and $p' \ge p$. For example, we can simply set $p' = 1$. Note that $\cT(\sigma)$ is computable (see the details in Section~\ref{subsection:overall}).  Thus $\cJ$ is computable and based on the definition of $K(p)$ as in~\eqref{a:Kpf}, we have
\[
\cJ \le O \bracket{K(p) \cT(\sigma)}.
\]

\subsection{Overall complexity of AGPPA}\label{subsection:overall}
In the previous sections, we showed that  the IGPPA step can reduce to
solving the optimization problem~\eqref{a:mF}, and analyzed the complexity 
assuming the existence of an algorithm $\cA_F$ satisfying the HOOD property~\eqref{a:erd} with per-iteration cost of Algorithm~\ref{APPROX} bounded by $\cT(\sigma)$.  
In this section, we assemble the previous results to give the overall complexity of AGPPA.  

Recall that at each outer iteration $s$  and inner iteration $t$ of AGPPA, we run the IGPPA step
with  base points $z^{s,t}= (x^{s,t}, \lambda^{s,t})\in \bR^{n+m}$, proximal regularization parameter $\sigma_{s}$, and error parameter $\eta_{s,t}$.  Based on Section~\ref{sec:F}, 
this problem reduces to find an approximate solution $\tilde x$ of  the following minimization problem:
\begin{align}\label{a:mFst}
\min_{ x\in \bR^n} \left[ F_{s,t} (x)\equiv   F(x, x^{s,t}, \lambda^{s,t}, \sigma_s) \right],
\end{align}
so that
\begin{equation}\label{stop:computablest}
\dist (\zero,\partial F_{s,t} (\tilde x))\le\frac{\sqrt{\lambda_{\min}(\cM^x)}}{\sigma_s}\min\left \{\eta_{s,t}, \delta\norm{  (\tilde x, \tilde{\lambda})- (x^{s,t},\lambda_{s,t})}_{\cM}\right\},
\end{equation}
with $\tilde \lambda :=  \Lambda \bracket{\tilde x,  \lambda_{s,t}, \sigma_s}$.
We decompose $F_{s,t}$  into 
\[
F_{s,t}(x) = f_{s,t}(x) + \phi_{s,t}(x),
\]
where
\[
f_{s,t} (x) :=  f_0 ( x)+ \psi (Ax,\lambda_{s,t}, \sigma_s),~\phi_{s,t} (x) := g (x)+ \frac{1}{2\sigma_s } \norm{  x- x^{s,t}}_{\cM^{x}}^2,~\forall x \in \bR^n.
\]
Let Assumption~\ref{ass:f0L} hold, then $\nabla f_{s,t}$ is $L_{s,t}$-Lipschitz continuous with
\begin{align}\label{Lab}
L_{s,t}: = L_0+ \frac{\sigma_{s}}{\lambda_{\min} (\cM^{\lambda})}\norm{A}^2,\enspace \forall s\geq 0, t\geq 0.
\end{align}
Denote 
\[
F_{s,t}^{\star}: = \min_{ x\in \bR^n} F_{s,t} (x).
\] 
We shall make  essentially  the same assumptions for  the function $F_{s,t}$ as  for the function $F$ 
in Section~\ref{sec: ICIGPPA}. More precisely, we additionally require the following assumptions.
\begin{assume}\label{assume:Hoodst}
\begin{enumerate}
 \item There exists an algorithm, named as $\cA_{F_{s,t}}$ for every inner problem~\eqref{a:mFst}, which returns an output satisfying
\[
\bE[F_{s,t} (\cA_{F_{s,t}} (x))-F_{s,t}^\star] \leq e^{-1}\bracket{F_{s,t}\bracket{x}-F_{s,t}^\star},\enspace \forall x\in \dom (g),
\]
within a fixed number of operations. 
\item The per-iteration of  Algorithm~\ref{APPROX} for every inner problem~\eqref{a:mFst} can be upper bounded by $\cT(\sigma_s)$ number of elementary iterations.
\end{enumerate}
\end{assume}

Then, we can apply Algorithm~\ref{APPROX} or Algorithm~\ref{cQ_cAcG}  to realize each IGPPA step, or equivalently to  find $\tilde x$ satisfying~\eqref{stop:computablest}. The complexity  of each IGPPA step derives directly from Theorem~\ref{thm:subcomp2}.
 For notational ease, denote
\begin{align}\label{def:ab}
  a := \frac{\norm{A}^2}{\lambda_{\min}(\cM^\lambda)}.
\end{align}
\begin{proposition}\label{prop:eff}
Under Assumption~\ref{ass:f0L} and Assumption~\ref{assume:Hoodst}, apply  AGPPA  (Algorithm~\ref{alg:AGPPA})  to the  maximal monotone operator $T_{\ell}$ defined in~\eqref{def:Tell}, with each IGPPA step being solved by Algorithm~\ref{APPROX} or 
Algorithm~\ref{cQ_cAcG}. Then,
with probability at least $1-p$, the complexity of the IGPPA step at  any iteration 
of AGPPA    is 
bounded by
\begin{align}\label{a:revb}
O\left (\left\lceil\max \left ( \ln  \bar \zeta_1, \ln \bar \zeta_2, 0 \right)\right\rceil \cT (\bar \sigma)\right),
\end{align}
where
\begin{equation}\label{def:zeta12}
\bar \zeta_1 := \frac{2   (L_0\bar \sigma+a\bar\sigma^2)^2r^2}{p \bar\eta^2\lambda_{\min}(\cM^x)},~\bar \zeta_2 := \frac{2 (1+\delta)^2 (L_0\bar \sigma+a\bar\sigma^2)^2}{p\delta^2\lambda_{\min}(\cM^x)\lambda_{\min}(\cM)},
\end{equation}
and $\bar \eta$, $\bar \sigma$ and $r$ are defined in  Theorem~\ref{thm:AGPPA} with $
z^0= (x^{0,0}, \lambda^{0,0})$ and $\Omega=T_{\ell}^{-1} (\zero,\zero)$.
\end{proposition}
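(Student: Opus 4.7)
The plan is to reduce Proposition~\ref{prop:eff} to a direct invocation of Theorem~\ref{thm:subcomp2}, using the uniform bounds on the algorithm parameters that are already guaranteed by Theorem~\ref{thm:AGPPA}. The key observation is that at every outer/inner iteration $(s,t)$, the IGPPA step is exactly an instance of the generic minimization problem~\eqref{a:mF} with base points $(\bar x,\bar\lambda)=(x^{s,t},\lambda^{s,t})$, proximal regularization parameter $\sigma=\sigma_s$ and error parameter $\eta=\eta_{s,t}$. Hence Algorithm~\ref{APPROX} (or Algorithm~\ref{cQ_cAcG}, which inherits the same complexity bound by construction) applied to $F_{s,t}$ satisfies, by Theorem~\ref{thm:subcomp2}, the bound $O(K(p)\cT(\sigma_s))$ with $K(p)$ defined in~\eqref{a:Kpf}. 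What remains is to upper bound $K(p)$ and $\cT(\sigma_s)$ uniformly in $(s,t)$ by the quantities appearing in~\eqref{a:revb}.

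For the Lipschitz constant, I would invoke~\eqref{Lab} together with $\sigma_s\leq\bar\sigma$ (from~\eqref{a:sigma}) and the definition~\eqref{def:ab} of $a$ to conclude $L_{s,t}\sigma_s = L_0\sigma_s + a\sigma_s^2 \leq L_0\bar\sigma + a\bar\sigma^2$. For the distance to the solution set, I would use the already-established bound~\eqref{a:zstb} in the proof of Theorem~\ref{thm:AGPPA}, namely $\|z^{s,t}\|\leq r$, combined with $\bar z^{0,0}\in\Omega$ to get $\dist((x^{s,t},\lambda^{s,t}),\Omega)\leq\|z^{s,t}-\bar z^{0,0}\|\leq r-\|\bar z^{0,0}\|\leq r$ (taking $r$ as in~\eqref{r:redef}). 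For the error parameter, I would use $\eta_{s,t}\geq\bar\eta$ which follows from the monotonicity of both factors in~\eqref{varepsilon:update} over the feasible range of $s$ and $t$ guaranteed by~\eqref{a:bars} and~\eqref{a:N}.

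Plugging these three bounds into the definition~\eqref{a:Kpf} and using the fact that $\zeta_1$ and $\zeta_2$ are increasing functions of their argument yields
\[
\zeta_1\!\left(\frac{L_{s,t}\sigma_s\,\dist((x^{s,t},\lambda^{s,t}),\Omega)}{\eta_{s,t}\sqrt{p}}\right)\leq \zeta_1\!\left(\frac{(L_0\bar\sigma+a\bar\sigma^2)r}{\bar\eta\sqrt{p}}\right)=\bar\zeta_1,
\]
and similarly $\zeta_2(L_{s,t}\sigma_s/\sqrt{p})\leq\bar\zeta_2$, so $K(p)\leq\lceil\max(\ln\bar\zeta_1,\ln\bar\zeta_2,0)\rceil$. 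Finally, monotonicity of $\cT$ gives $\cT(\sigma_s)\leq\cT(\bar\sigma)$. Combining these yields~\eqref{a:revb}.

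The main subtlety, and the only place where one must be careful, is verifying that the uniform bound $\dist((x^{s,t},\lambda^{s,t}),\Omega)\leq r$ really does hold for \emph{every} $(s,t)$ produced by AGPPA, not just for the iterates recorded as the starting point $z^{s,0}$ of a new outer loop. This requires reusing the global argument from the proof of Theorem~\ref{thm:AGPPA} that the full sequence $\{z^{s,t}\}$ falls into the framework of Theorem~\ref{GPPA:convergence} by concatenating the inner sequences, so that the summability of $\{\eta_{s,t}\}$ and the one-step inequality~\eqref{eq:zbound1} apply uniformly. Once this is in place, the rest of the argument is bookkeeping.
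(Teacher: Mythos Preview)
Your proposal is correct and follows essentially the same approach as the paper: invoke Theorem~\ref{thm:subcomp2} at each iteration $(s,t)$, then uniformly bound $L_{s,t}\sigma_s$, $\dist((x^{s,t},\lambda^{s,t}),\Omega)$, $\eta_{s,t}$ and $\sigma_s$ by $L_0\bar\sigma+a\bar\sigma^2$, $r$, $\bar\eta$ and $\bar\sigma$ respectively, using~\eqref{Lab},~\eqref{a:zkzo},~\eqref{varepsilon:update} and~\eqref{a:sigma}. The ``main subtlety'' you flag is exactly the one the paper handles implicitly by citing~\eqref{a:zkzo} (established in the proof of Theorem~\ref{thm:AGPPA} for the concatenated sequence), so your treatment matches.
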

\begin{proof}
Let $ (s,t)$ be any running iteration of AGPPA. More precisely,  let any $0\leq s\leq  s_{\circ}$ and $0\leq
t\leq N_{s}
$. 
 The complexity of the IGPPA step at iteration $ (s,t)$ of AGPPA is bounded by
 \begin{equation}\label{eq:Kst}
 K_{s,t} (p):= \left\lceil\max \left ( \ln \left( \zeta_1\bracket{\frac{L_{s,t}\sigma_s \dist((x^{s,t}, \lambda^{s,t}),\Omega) }{ \eta_{s,t} \sqrt{p}}}\right) ,   \ln\bracket{ \zeta_2\bracket{ \frac{L_{s,t} \sigma_s }{\sqrt{p}}}},0 \right)\right\rceil,
 \end{equation}
 which is obtained from~\eqref{a:Kpf}
 with $ (\bar x, \bar \lambda) =  (x^{s,t}, \lambda^{s,t})$,  $L=L_{s,t}$,
 $\sigma=\sigma_s$, and $\eta=\eta_{s,t}$.
First, it  derives from~\eqref{a:zkzo} that
\begin{align}\label{a:distr}
\dist\bracket{ (x^{s,t},\lambda^{s,t}), T^{-1}_{\ell} (\zero,\zero)}  \leq r.
\end{align}
Second, based on the update rule~\eqref{varepsilon:update}, we know that 
\begin{align}\label{a:etas}
\eta_{s,t}\geq \eta_{s_{\circ}, N_{ s_{\circ}}}\geq \bar \eta.
\end{align}
Third, for the proximal regularization parameter, we have
\begin{align}\label{a:sigmas}
\sigma_s\leq \sigma_{\bar s}\leq \bar \sigma.
\end{align}
By~\eqref{Lab},~\eqref{a:distr},~\eqref{a:etas}, and~\eqref{a:sigmas}, we have 
\begin{equation}\label{eq:zetabound}
 \zeta_1\bracket{\frac{L_{s,t}\sigma_s \dist((x^{s,t}, \lambda^{s,t}),\Omega) }{ \eta_{s,t} \sqrt{p}}} \le \bar \zeta_1,~\zeta_2\bracket{ \frac{L_{s,t} \sigma_s }{\sqrt{p}}} \le \bar \zeta_2,
\end{equation}
 and
 \begin{equation}\label{cTs:bound}
 \cT (\sigma_s) \overset{\eqref{a:sigmas}}{\leq} \cT (\bar \sigma).
\end{equation}
 Then 
\begin{equation}\label{Kst:bound}
 K_{s,t} (p) \overset{\eqref{eq:zetabound}}\le \left\lceil\max \left ( \ln \bar \zeta_1 ,   \ln \bar \zeta_2, 0 \right)\right\rceil.
\end{equation}
The  two bounds~\eqref{cTs:bound} and~\eqref{Kst:bound} above show that~\eqref{a:revb} is an upper  bound of~\eqref{a:K} with
 $ (\bar x, \bar \lambda) =  (x^{s,t} , \lambda^{s,t}), L = L_{s,t}, \sigma = \sigma_s$, and $\eta = \eta_{s,t}$.
\end{proof}

The proposition  above gives an upper bound on the complexity to solve each IGPPA step of  AGPPA with probability at least $1-p$.  In Theorem~\ref{thm:AGPPA}, we have a bound on the total number of IGPPA steps. Now it suffices
to combine these two bounds to show the total complexity of AGPPA. 
\begin{theorem}\label{thm:6} Let Assumption~\ref{ass:AGPPAF}, Assumption~\ref{ass:f0L} and Assumption~\ref{assume:Hoodst} hold. Then,
with probability at least $1-p$, AGPPA can find a solution $ (x,\lambda)$ satisfying
$$
\cE\left (x,\lambda\right) \leq \epsilon,
$$
within
\begin{align}\label{a:dfer}
O\left ( \bar N\left\lceil \ln \bar N + \max \bracket{ \ln \bar \zeta_1,  \ln \bar \zeta_2, 0}  \right\rceil \cT (\bar \sigma) \right)
\end{align}
number of elementary operations,  where
$\bar \zeta_1,\bar \zeta_2$ are defined in Proposition~\ref{prop:eff}, and $\bar N, \bar \sigma$ are defined in  Theorem~\ref{thm:AGPPA} with $
z^0= (x^{0,0}, \lambda^{0,0})$ and $\Omega=T_{\ell}^{-1} (\zero,\zero)$. \end{theorem}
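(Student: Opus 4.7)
The plan is to combine Theorem~\ref{thm:AGPPA} (which controls the total number of IGPPA steps) with Proposition~\ref{prop:eff} (which controls the cost of each individual IGPPA step), using a union bound to handle the fact that Proposition~\ref{prop:eff} is a probabilistic guarantee and we need every single IGPPA step to terminate within the claimed budget.

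First I would recall that by Theorem~\ref{thm:AGPPA}, for any initial point $z^{0,0}=(x^{0,0},\lambda^{0,0})$ and any accuracy $\epsilon>0$, under Assumption~\ref{ass:AGPPAF}, AGPPA terminates with an output $z_{\circ}$ satisfying $\cE(z_{\circ})\le \epsilon$, and the total number of IGPPA steps is bounded deterministically by $\bar N$. So it suffices to argue that, with probability at least $1-p$, every one of these (at most) $\bar N$ IGPPA steps is realized in the budget stated in Proposition~\ref{prop:eff}.

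Next I would invoke Proposition~\ref{prop:eff} in a slightly refined form: for any failure probability $p'\in(0,1)$, each individual IGPPA step of AGPPA completes within
\[
O\!\left(\left\lceil\max\!\left(\ln \bar\zeta_1(p'),\;\ln \bar\zeta_2(p'),\;0\right)\right\rceil\cT(\bar\sigma)\right)
\]
elementary operations with probability at least $1-p'$, where $\bar\zeta_1(p')$ and $\bar\zeta_2(p')$ are obtained from~\eqref{def:zeta12} by replacing the $p$ appearing in the denominator by $p'$. Setting $p'=p/\bar N$ and applying the union bound over the $\bar N$ IGPPA steps then ensures that, with probability at least $1-p$, all IGPPA steps finish within their individual budget. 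Because $\bar\zeta_i(p/\bar N)=\bar N\cdot\bar\zeta_i$ and the $\bar\zeta_i$'s enter only through a logarithm, this costs an additive $\ln \bar N$ in the per-step bound, producing the $\ln\bar N+\max(\ln\bar\zeta_1,\ln\bar\zeta_2,0)$ factor of the theorem.

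Finally I would multiply the per-step cost by the total number of IGPPA steps $\bar N$ to obtain the overall budget~\eqref{a:dfer}, and note that on the same $1-p$ event Theorem~\ref{thm:AGPPA} guarantees $\cE(z_{\circ})\le\epsilon$, which is exactly the claimed conclusion. The only mildly delicate point is verifying that the refined version of Proposition~\ref{prop:eff} with $p'=p/\bar N$ is legitimate: this follows because the proof of that proposition, which goes through Theorem~\ref{thm:subcomp}, treats $p$ as a generic failure parameter and the bounds~\eqref{a:distr}, \eqref{a:etas} and~\eqref{a:sigmas} on $\dist((x^{s,t},\lambda^{s,t}),\Omega)$, $\eta_{s,t}$ and $\sigma_s$ continue to hold along every run of AGPPA independently of $p'$. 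I do not expect any real obstacle here; the main care is just to make sure the union bound is applied correctly and that $\bar N$ is indeed deterministic (which Theorem~\ref{thm:AGPPA} explicitly asserts), so that using it inside the per-step failure probability is valid.
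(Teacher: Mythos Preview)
Your proposal is correct and follows essentially the same approach as the paper's proof: apply Proposition~\ref{prop:eff} with per-step failure probability $p/\bar N$, observe that this adds $\ln\bar N$ to the logarithmic factor, and take a union bound over the at most $\bar N$ IGPPA steps guaranteed by Theorem~\ref{thm:AGPPA}. Your additional remarks about the determinism of $\bar N$ and the validity of reusing Proposition~\ref{prop:eff} with a generic failure parameter are accurate and, if anything, make explicit a point the paper leaves implicit.
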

\begin{proof}
By Proposition~\ref{prop:eff}, each IGPPA step terminates within
\[
O\left (\left\lceil \ln \bar N +\max \left ( \ln \bar \zeta_1, \ln  \bar \zeta_2, 0 \right)\right\rceil \cT (\bar \sigma)\right)
\]
number of elementary operations  with probability at least $1-p/\bar N$.  Based on Theorem~\ref{thm:AGPPA}, the total number of IGPPA steps cannot exceed $\bar N$.  Then  we conclude the result by applying the union bound property.
\end{proof}

We apply Corollary~\ref{cor:wrd} to get a better understanding of the complexity by removing problem-independent constants. Before that, let us add an assumption on $\cT$.

\begin{assume}\label{ass:4}
There are  constants $\vartheta_1>0$, $\vartheta_2 > 0$ and $\iota>0$ such that 
\begin{align}\label{a:cde}
\cT ( \sigma)=O (\vartheta_1\sigma^\iota+ \vartheta_2 \sigma^{\iota/2} + \Upsilon).
\end{align}
Here, the big $O$ hides the problem-independent constants.
\end{assume}

Here, we provide three examples for $\cA_F$ to solve problem~\eqref{decompose:F}, including AdaRES~\cite{fercoq2019adaptive}  for the general case, restarted APPROX~\cite{Fercoq2018Restarting}  for the case where $\phi(x) = \sum_{i=1}^n \phi_i(x_i)$ is separable, and  Katyusha~\cite{Allen2016Katyusha} for the case where $f(x) = \frac{1}{m}\sum_{i = 1}^m f_i(x)$.
\begin{example}\label{ex:AdaRES} Assume that per-iteration cost of Algorithm~\ref{APPROX} is determined by operation $\cA_F$, then
AdaRES~\cite{fercoq2019adaptive}  finds $\cA_F (x)$ satisfying  the HOOD property~\eqref{a:erd}, 
and the corresponding  $\cT$  satisfies  Assumption~\ref{ass:4} with
\begin{equation}\label{param:AdaRES}
\vartheta_1  = \norm{A}  \nnz (A),~\vartheta_2 = L_0\nnz (A), ~\Upsilon = \nnz (A),~\iota = 1.
\end{equation}
\end{example}
\begin{example}\label{ex:APPROX}
 Assume that the per-iteration cost of Algorithm~\ref{APPROX} is determined by operation $\cA_F$, $g(x) = \sum_{i=1}^n g_i(x_i)$ is separable, and $\cM^{x}$ is a diagonal matrix, then restarted APPROX~\cite{Fercoq2018Restarting}   finds $\cA_F (x)$ satisfying  the HOOD property~\eqref{a:erd}, and the corresponding  $\cT$  satisfies  Assumption~\ref{ass:4} with
\begin{equation}\label{param:APPROX}
\vartheta_1  = \max_{i \in [n]} \norm{a_i}  \nnz (A),~\vartheta_2 = L_0\nnz (A), ~\Upsilon = \nnz (A),~\iota = 1,
\end{equation}
where $a_i$ denotes the $i$th column vector of $A$.
\end{example}
Note that  if $g(x) = \sum_{i=1}^n g_i(x_i)$ is separable and $\cM^{x}$ is a diagonal matrix, then $\phi(x)$ is separable. For simplicity, in Example~\ref{ex:APPROX}, we only show the result of restarted APPROX in coordinatewise form (it can be extended to blockwise setting). The result can be easily obtained from~\cite{Fercoq2018Restarting} with the fact that  $f$ satisfies the expected separable overapproximation (ESO) assumption: 
 \[
 E \bracket{f(x + h_i e_i)} \le f(x) + \frac{1}{n} \bracket{ \langle \nabla f(x),  h\rangle +  \frac{1}{2} \sum_{i = 1}^{n} L_i h_i^2 },
 \]
 with
  \[
 L_i = L_0 + \frac{\sigma }{ \lambda_{\min} \bracket{\cM_{\lambda}}} \norm{a_i}^2,~\forall i = 1,\dots, n.
 \]
The ESO  property holds since $\nabla f$ is coordinate-wise Lipschitz continuous with vector $(L_1, \dots, L_n)$~\cite{Fercoq2018Restarting}:
\[
\norm{  \nabla_i  f (x + h_i e_i)  - \nabla_i f(x)} \le L_i \norm{h_i}, \forall i \in [n],~\forall h_i \in \bR,
\]
which can be directly derived from~\eqref{eq:nablaf} and~\eqref{eq:Lambdaconcrete}. Here, $\nabla_i f = e_i^{\top} \nabla f$ and $e_i$ is the unit vector with $i$-th coordinate being $1$.

\begin{example}\label{ex:Katyusha} Assume that the per-iteration cost of Algorithm~\ref{APPROX} is determined by operation $\cA_F$, $h^*(\lambda) = \sum_{i = 1}^{m}  h^*_i(\lambda_i)$ is separable and $\cM^{\lambda}$ is a diagonal matrix, then Katyusha~\cite{Allen2016Katyusha}  finds $\cA_F (x)$ satisfying  the HOOD property~\eqref{a:erd} 
and the corresponding  $\cT$  satisfies  Assumption~\ref{ass:4} with
\begin{equation}\label{param:Katyusha}
\vartheta_1  = \frac{ \norm{A}_F}{\sqrt{m}}  \nnz (A),~\vartheta_2 = L_0\nnz (A), ~\Upsilon = \nnz (A),~\iota = 1.
\end{equation}
\end{example}
The result in Example~\ref{ex:Katyusha} derives from the fact that 
\begin{equation}\label{ex3:eq1}
f(x)=\frac{1}{m} \sum_{i=1}^{m} f_i(x),
\end{equation}
and the gradient $\nabla f_i$ is $L_i$-Lipschitz continuous with
\begin{equation}\label{ex3:eq2}
L_i =  m \bracket{ L_0 +  \frac{\sigma}{\cM^{\lambda}_{ii}} \norm{A_i}^2},~\forall i \in [m],
\end{equation}
where
\[
f_i(x) = m \bracket{f_0(x) + \psi_i (A_i^T x, \bar \lambda_i, \sigma)},~\forall i \in [m],
\]
 \[\psi_i (u_i, \bar \lambda_i, \sigma) =  \max_{\lambda_i \in \bR} \left\{
u_i \lambda_i  - h^*_i\bracket{\lambda_i}- \frac{1}{2\sigma }\norm{ \lambda_i-\bar\lambda_i }_{\cM^{\lambda}_{ii}}^2
\right\}, ~\forall i \in [m],
\]
and $A_i$ is the $i$th row vector of $A$.
Note that~\eqref{ex3:eq1} is obtained from the assumption that $h^*(\lambda) = \sum_{i = 1}^{m}  h^*_i(\lambda_i)$ is separable, and $\cM^{\lambda}$ is a diagonal matrix. And $\nabla f_i$ is $L_i$-Lipschitz continuous with $L_i$ given  in~\eqref{ex3:eq2} since $\phi_i(\cdot, \bar \lambda_i, \sigma)$ is differentiable, and  the gradient is $\sigma/\cM^{\lambda}_{ii}$-Lipschitz continuous~\cite[Theorem 1]{Yu2005Smooth}.

\begin{corollary}\label{cor:complexity bound of AGPPA}
Under Assumption~\ref{ass:4}, the bound~\eqref{a:dfer} is 
\begin{equation}\label{bound1:eq}
\begin{aligned}
&O\left ( \left (\vartheta_1  (\alpha\varrho_{\sigma}  \kappa_r)^{\iota}+ \vartheta_2  (\alpha\varrho_{\sigma}  \kappa_r)^{\iota/2} + \Upsilon\right) \right) \times  \\
&O \left( \ln \kappa_r\ln\frac{\zeta r \kappa_r }{\epsilon} \ln \left ( \frac{r \kappa_r   (L_0+a)}{p} \ln \frac{\zeta  r \kappa_r }{\epsilon}\right)\right).
\end{aligned}
\end{equation}
Here, the big $O$ hides the problem-independent
constants.
\end{corollary}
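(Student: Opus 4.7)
The plan is to combine the complexity bound~\eqref{a:dfer} from Theorem~\ref{thm:6} with the asymptotic estimates of $\bar{N}$, $\bar{\sigma}$, $\bar{\eta}$ from Corollary~\ref{cor:wrd}, and then simplify $\ln\bar\zeta_1$ and $\ln\bar\zeta_2$ using the explicit formulas in~\eqref{def:zeta12} together with Assumption~\ref{ass:4}. The end goal is to match the product form in~\eqref{bound1:eq}.

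First, I would substitute $\bar{\sigma} = \alpha\kappa_r\varrho_\sigma$ (from~\eqref{a:sigma}) into $\cT(\bar\sigma)$ using Assumption~\ref{ass:4}, which immediately produces the first line of~\eqref{bound1:eq}:
$$
\cT(\bar\sigma) = O\bigl(\vartheta_1(\alpha\varrho_\sigma\kappa_r)^\iota + \vartheta_2(\alpha\varrho_\sigma\kappa_r)^{\iota/2} + \Upsilon\bigr).
$$
It then remains to show that the factor $\bar{N}\lceil\ln\bar N + \max(\ln\bar\zeta_1,\ln\bar\zeta_2,0)\rceil$ is of order $\ln\kappa_r\,\ln(\zeta r\kappa_r/\epsilon)\,\ln\!\bigl(\tfrac{r\kappa_r(L_0+a)}{p}\ln\tfrac{\zeta r\kappa_r}{\epsilon}\bigr)$. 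The bound $\bar{N} = O(\ln\kappa_r\ln(\zeta r\kappa_r/\epsilon))$ from Corollary~\ref{cor:wrd} gives the outer factor directly, and $\ln\bar N$ is only doubly logarithmic, so it will be absorbed by the $\max$ term.

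The main technical work is estimating $\ln\bar\zeta_1$ and $\ln\bar\zeta_2$ defined in~\eqref{def:zeta12}. Using $\bar\sigma = O(\kappa_r)$, the factor $L_0\bar\sigma + a\bar\sigma^2$ is of order $\kappa_r(L_0+a\kappa_r)$, so
$$
\ln(L_0\bar\sigma+a\bar\sigma^2) = O\bigl(\ln\kappa_r + \ln(L_0+a)\bigr),
$$
after absorbing the extra $\ln(1+\kappa_r)$ into $\ln\kappa_r$. The more delicate term is $-\ln\bar\eta$ appearing in $\ln\bar\zeta_1$, which by Corollary~\ref{cor:wrd} is of order $\ln\kappa_r + \ln\ln(\zeta r\kappa_r/\epsilon)$. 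Collecting these and the $\ln r$, $\ln(1/p)$ contributions yields
$$
\max\bigl(\ln\bar\zeta_1,\ln\bar\zeta_2,0\bigr) = O\bigl(\ln(r\kappa_r(L_0+a)/p) + \ln\ln(\zeta r\kappa_r/\epsilon)\bigr).
$$

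Finally, I would use the identity $\ln A + \ln\ln B = \ln(A\ln B)$ to rewrite this sum as $\ln\!\bigl(\tfrac{r\kappa_r(L_0+a)}{p}\ln\tfrac{\zeta r\kappa_r}{\epsilon}\bigr)$, note that $\ln\bar N$ is dominated by this quantity, multiply by $\bar N$ and $\cT(\bar\sigma)$, and conclude. The main obstacle is bookkeeping: one must be careful when bounding $\ln(L_0+a\kappa_r)$ by $\ln(L_0+a)+\ln\kappa_r$ (valid up to a constant) and when folding the $\ln\bar\eta$ contribution into the final doubly-logarithmic term, so that the resulting bound has precisely the compact form stated in~\eqref{bound1:eq}.
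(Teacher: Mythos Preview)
Your proposal is correct and follows essentially the same approach as the paper's proof: expand $\ln\bar\zeta_1$ and $\ln\bar\zeta_2$ from their definitions in~\eqref{def:zeta12}, bound each summand using the estimates on $\bar\sigma$, $\bar\eta$, and $\bar N$ from Corollary~\ref{cor:wrd}, plug $\bar\sigma=\alpha\varrho_\sigma\kappa_r$ into Assumption~\ref{ass:4} for the $\cT(\bar\sigma)$ factor, and then collect terms into the product form~\eqref{bound1:eq}. The paper is slightly terser but the structure and all the key steps are the same.
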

\begin{remark}
Based on Example~\ref{ex:AdaRES}, Example~\ref{ex:APPROX} and Example~\ref{ex:Katyusha}, we have 
\[
\vartheta_2 = L_0\nnz (A), ~\Upsilon = \nnz (A),~\iota = 1
\]
 by choosing some appropriate inner solver to solve the subproblem~\eqref{a:mFst}.  Then, based on the definition of $a$ in~\eqref{def:ab}, the bound~\eqref{bound1:eq} given in Corollary~\ref{cor:complexity bound of AGPPA} further reduces to 
\begin{align}\label{a:floglo}
\cO \left (\vartheta_1 \kappa_r  \ln \kappa_r \ln  \frac{\zeta r  \kappa_r }{\epsilon}  \ln  \bracket{ \frac{r\kappa_r   (L_0+ \norm{A})}{p}  \ln \frac{\zeta r \kappa_r }{\epsilon}}\right).
\end{align}
In conclusion, we proved that the overall complexity of AGPPA applied to the convex constrained problem~\eqref{prob:constrained} is bounded by~\eqref{a:floglo} with probability at least $1-p$. We point out that 
\[
\vartheta_1 \le \norm{A},
\]
since we can always choose AdaRES method as the inner solver.
\end{remark}

\section{Application example} \label{section:LP}
We emphasize again that AGPPA and the given complexity results can be applied to  the convex programming problem~\eqref{prob:constrained} satisfying Assumption~\ref{ass:AGPPAF}, such as the linear-quadratic problem and the LASSO problem. We defer to future work the inclusion of other relevant models into the application pool of AGPPA. 
In this section, we illustrate the application of  AGPPA  for solving the LP problem. 
Following the same notations as~\cite{Yen2015Sparse}, we describe our LP problem as follows:
\begin{equation}\label{P}
\begin{aligned}
&\min_{x\in \bR^n}&& c^\top x \\
&\st&& A_Ix \le b_I\\
&&& A_Ex = b_E\\
&&& x_{1},\dots x_{n_b}\ge  0.
\end{aligned}
\end{equation}
Here, 
$c \in \bR^n, A_I \in \bR^{m_I \times n},  b_{I} \in \bR^{m_I}, A_E \in \bR^{m_E \times n}, b_{E} \in \bR^{m_E}$ and $n_b\in [n]$. Thus, we have $m_I$ inequalities, $m_E$ equalities and the first $n_b$ coordinates must be nonnegative. Denote $m: = m_I + m_E$, $b: = [b_I; b_E] \in \bR^m$, and $A:=[A_I; A_E] \in \bR^{m\times n}$. 

\subsection{Applicability of AGPPA}
\subsubsection{Bounded metric subregularity}
 We decompose $x\in \bR^n$ as $x=[x_b; x_I]$ with $x_b\in \bR^{n_b}$, and $\lambda \in \bR^m$ as $\lambda=[\lambda_I; \lambda_E]$ with $\lambda_I\in \bR^{m_I}$. Then, problem~\eqref{P} can be written as  a special case of the convex programming problem~\eqref{prob:constrained} with:
 \begin{equation}\label{LP:functions}
 f_0 (x) = c^{\top} x,~g(x) = \delta_{\{x_{b} \ge 0\}} (x),~h(u) = \delta_{U}(u),
 \end{equation}
 where
 \[
 U = \left\{ u \in \bR^m | u_1 \le b_1,\dots, u_{m_I} \leq b_{m_I},  u_{m_I +1} =b_{m_I+1}, \dots, u_m = b_m \right\}.
 \]
 The Fenchel  conjugate function of $h$ is given by:
 \begin{equation}\label{LP:conjugatefunctions}
h^*(\lambda) =  \langle b, \lambda \rangle + \delta_{\lambda_I \ge 0}(\lambda),~ \lambda \in \bR^m.
\end{equation}
Then, the associated Lagrangian function defined in~\eqref{a:defl} is:
\[
\ell (x, \lambda) = c^{\top}x  +\langle Ax ,\lambda\rangle - \bracket{ \langle b, \lambda \rangle + \delta_{\{\lambda_I \ge 0\}} (\lambda) }+ \delta_{\{x_{b} \ge 0\}} (x),
\]
and $T_{\ell}:\bR^{n+m}\rightarrow \bR^{n+m}$  defined in~\eqref{def:Tell} becomes:
\begin{equation}\label{eq:iddd}
T_{\ell}(x,\lambda)= \left\{ 
 (v,u)\in \bR^{n+m}\\
 \middle |
\begin{aligned}
& v\in c+A^\top \lambda+\partial \delta_{\{x_{b} \ge 0\}} (x) , \\
& u\in b-Ax+\partial \delta_{\{\lambda_I \ge 0\}} (\lambda) .
\end{aligned}  
\right\}.
\end{equation}
It is clear that $T_{\ell}$
is a polyhedral multifunction, and hence the bounded metric subregularity condition (Assumption~\ref{assume:growthcondition}) is satisfied.  We assume the existence of optimal solutions of the LP problem~\eqref{P}, which then implies the nonemptyness of the solution set: $$\Omega=T_{\ell}^{-1} (\zero,\zero)=\left\{ 
 (x,\lambda) \in \bR^{n+m}\\
 \middle |
\begin{aligned}
& x_b\geq 0 , \lambda_I\geq 0, \\
& -A^\top \lambda-c  \in  \partial \delta_{\{x_{b} \ge 0\}} (x),\\
&Ax - b \in \partial \delta_{\{\lambda_I \ge 0\}} (\lambda).
\end{aligned}  
\right\}.
$$
Note that the solution set $\Omega$ can be represented by the following system of linear inequalities: 
\begin{equation}\label{eq:omegas}\Omega=\left\{(x,\lambda) \in \bR^{n+m}\\
 \middle |
\begin{aligned}  &x_b\geq 0,  \lambda_I\geq 0, \\  &c^{\top}x + b^{\top}\lambda=0, \\
& [ A^{\top}\lambda +c]_{-}^{n_b}=0, \\  & [Ax - b]_{+}^{m_I}=0.
\end{aligned}
\right\}.\end{equation}
Denote by $\theta$  the smallest constant satisfying
\begin{align}\label{a:thetaSdef}
\dist \bracket{\bracket{x,\lambda}, \Omega}
\le \theta
\norm{ \left [c^{\top}x + b^{\top}\lambda; [ A^{\top}\lambda +c]_{-}^{n_b}; [Ax - b]_{+}^{m_I}\right ]},
\end{align}
for all $x,\lambda$ with $x_{b} \ge 0$ and $\lambda_I \ge 0$.  It is clear that $\theta$ is upper bounded by the Hoffman constant~\cite{Hoffman2015On} associated with system~\eqref{eq:omegas}. 

\begin{lemma}\label{LP:kappa bound} For any $x,\lambda$ with $x_{b} \ge 0$ and $\lambda_I \ge 0$, we have 
\begin{equation}\label{LP_kappa:eq1}
\dist \bracket{ (x,\lambda), \Omega}\le \theta \left ( \norm{ (x, \lambda)}^2 +1\right)^{1/2} \dist\bracket{0, T_{\ell} (x, \lambda)}.
\end{equation}
\end{lemma}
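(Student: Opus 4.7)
The plan is to exploit the definition of $T_\ell$ in~\eqref{eq:iddd} together with the explicit description of $\Omega$ in~\eqref{eq:omegas}, and to bound the three residuals appearing in~\eqref{a:thetaSdef} individually by $\dist(\zero,T_\ell(x,\lambda))(\norm{(x,\lambda)}^2+1)^{1/2}$.

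First I would pick an arbitrary $(v,u)\in T_\ell(x,\lambda)$ and write
\[
v=c+A^\top\lambda+s_1,\qquad u=b-Ax+s_2,
\]
where $s_1\in\partial\delta_{\{x_b\ge 0\}}(x)$ and $s_2\in\partial\delta_{\{\lambda_I\ge 0\}}(\lambda)$. Because these subdifferentials are normal cones to orthant factors, the components satisfy $(s_1)_b\le 0$, $(s_1)_I=0$, $\langle (s_1)_b,x_b\rangle=0$ and symmetrically $(s_2)_I\le 0$, $(s_2)_E=0$, $\langle (s_2)_I,\lambda_I\rangle=0$. These sign/complementarity relations are the workhorse of the proof.

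The bounds on the residuals then follow coordinate by coordinate. For the inequality residual, $(Ax-b)_I=-u_I+(s_2)_I$ with $(s_2)_I\le 0$ gives $[Ax-b]_+^{m_I}\le [-u_I]_+\le |u_I|$ componentwise, hence $\norm{[Ax-b]_+^{m_I}}\le\norm{u}$. For the dual stationarity residual, $(A^\top\lambda+c)_b=v_b-(s_1)_b\ge v_b$ and $(A^\top\lambda+c)_I=v_I$, so $\min(\cdot,0)$ componentwise yields $\norm{[A^\top\lambda+c]_-^{n_b}}\le\norm{v}$. For the duality gap residual, pairing with $x$ and $\lambda$ and using the two complementarity identities $\langle s_1,x\rangle=0$ and $\langle s_2,\lambda\rangle=0$ telescopes to
\[
\langle v,x\rangle+\langle u,\lambda\rangle=c^\top x+b^\top\lambda,
\]
so Cauchy-Schwarz gives $|c^\top x+b^\top\lambda|\le\norm{(v,u)}\norm{(x,\lambda)}$.

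Combining the three bounds gives
\[
\norm{\bigl[c^\top x+b^\top\lambda;[A^\top\lambda+c]_-^{n_b};[Ax-b]_+^{m_I}\bigr]}^2\le\norm{(v,u)}^2\bigl(\norm{(x,\lambda)}^2+1\bigr).
\]
Taking the infimum over $(v,u)\in T_\ell(x,\lambda)$ and plugging the result into~\eqref{a:thetaSdef} yields~\eqref{LP_kappa:eq1}. I do not anticipate any real obstacle; the only care required is to read off the normal-cone structure of $\partial\delta$ correctly so that the sign inequalities and complementarity identities line up, and to check that the $\min$/$\max$ projections used in~\eqref{a:thetaSdef} match the convention stated in the paper's notations.
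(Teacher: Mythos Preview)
Your proposal is correct and follows essentially the same approach as the paper's proof: pick $(v,u)\in T_\ell(x,\lambda)$, use the normal-cone sign/complementarity structure of $\partial\delta_{\{x_b\ge 0\}}$ and $\partial\delta_{\{\lambda_I\ge 0\}}$ to bound $\norm{[Ax-b]_+^{m_I}}\le\norm{u}$ and $\norm{[A^\top\lambda+c]_-^{n_b}}\le\norm{v}$, derive the exact identity $c^\top x+b^\top\lambda=v^\top x+u^\top\lambda$ from complementarity, and finish with Cauchy--Schwarz. The only cosmetic difference is that the paper phrases the sign constraints as the componentwise inequalities $v\le c+A^\top\lambda$ and $u\le b-Ax$ rather than introducing $s_1,s_2$ explicitly; just make sure when you write up the inequality-residual bound that you also account for the equality components (where $(s_2)_E=0$ gives $(Ax-b)_E=-u_E$), which you set up but did not spell out.
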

\begin{proof}
 Let any  $ (v,u) \in T_{\ell} (x,\lambda)$. By~\eqref{eq:iddd}, we have $v\leq c+A^\top \lambda$ and 
 $u\leq b-Ax$, from which we deduce that
 \begin{equation}\label{eq:LPTell}
 \begin{aligned}
&\| [A^\top \lambda +c]_-^{n_b} \|\leq \|  [v]_-^{n_b}\|, \\
&\| [Ax - b]_+^{m_I} \|\leq \|  [-u]_+^{m_I}\|.
\end{aligned}
\end{equation}
Furthermore, we also have $x^\top(-v+ c+A^\top \lambda)=0$ and $\lambda^\top(u+Ax-b)=0$, based on which we know that
 \begin{equation}\label{eq:LPTel2}
\begin{aligned}
&c^{\top}x + b^{\top}\lambda = v^{\top}x + u^{\top}\lambda.
\end{aligned}
\end{equation}
In view of~\eqref{a:thetaSdef}, we have  
\[\begin{aligned}
&\dist \bracket{ (x,\lambda), \Omega}\le\theta  \norm{[c^{\top}x + b^{\top}\lambda; [Ax - b]_+^{m_I}; [ A^{\top}\lambda+c]_{-}^{n_b}]} \\
&\overset{\eqref{eq:LPTell}+\eqref{eq:LPTel2}}\le\theta \norm{[v^{\top}x + u^{\top}\lambda; [-u]^{m_I}_{+}; [v]_{-}^{n_b}]}\\
&\le\theta \norm{[v^{\top}x + u^{\top}\lambda; u; v]} \\
&\le  \theta \left ( \norm{ (x, \lambda)}^2 +1\right)^{1/2}\norm{  (v,u)}, \\
\end{aligned}\]
which implies~\eqref{LP_kappa:eq1}.
 \end{proof}
 
 Lemma~\ref{LP:kappa bound} asserts that $T_\ell$ satisfies Assumption~\ref{assume:growthcondition} with  $\kappa_r$ bounded by:
 \begin{align}\label{a:kapprTell}
\kappa_r \le  \theta \bracket{ r^2+1}^{1/2}=O(\theta r).
\end{align}

\subsubsection{Computable stopping criterion}\label{LP:computable}
For simplicity, let $\cM = I$. Fix base points $\bar x\in \bR^n$ and $\bar\lambda\in \bR^m$, then the function $\psi$ defined in~\eqref{a:psi} is:
\[\begin{aligned}
&\psi (u, \bar\lambda, \sigma)&=&\max_{\lambda} \left\{
\langle u, \lambda \rangle - \bracket{ \langle b, \lambda \rangle + \delta_{\lambda_I\geq 0} (\lambda)}- \frac{1}{2\sigma }\norm{ \lambda-\bar\lambda }^2\right\}\\
&&=&\frac{1}{2\sigma}\norm{ \left[\bar \lambda + \sigma (u - b ) \right]^{m_I}_{+} }^2 -\frac{1}{2\sigma}\norm{\bar \lambda}^2.
\end{aligned}\]
And the function $\Lambda$ defined in~\eqref{def:LAMBDA} is:
\begin{equation}\label{LP:lambda}
\Lambda (x, \bar \lambda, \sigma) = \left[ \bar \lambda + \sigma (Ax-b) \right]^{m_I}_{+}.
\end{equation}
The function $F$ defined in~\eqref{eq:defF}  thus takes the following form:
\begin{equation}\label{LP:fphi}
F (x)  =  c^\top x+ \frac{1}{2\sigma}
\norm{\Lambda (x, \bar \lambda, \sigma)}^2 -\frac{1}{2\sigma}\norm{\bar \lambda}^2+  \frac{1}{2\sigma } \norm{ x -\bar x }^2 + \delta_{\{x_{b} \ge 0\}} (x),
\end{equation}
which can be written as 
$F (x)=f (x)+\phi (x)$
with 
\[\begin{aligned}
&f (x)\equiv c^\top x+ \frac{1}{2\sigma}
\norm{\Lambda (x, \bar \lambda, \sigma)}^2 -\frac{1}{2\sigma}\norm{\bar \lambda}^2,\\
&\phi(x)\equiv \frac{1}{2\sigma } \norm{ x -\bar x }^2 + \delta_{\{x_{b} \ge 0\}} (x).
\end{aligned}\]
The gradient of $f$ is:
\begin{equation} \label{LP:nablaf}
\nabla f (x)  = c+ A^\top \Lambda (x, \bar \lambda, \sigma),
\end{equation}
and thus
\begin{equation} \label{LP:partialF}
\begin{aligned}
&\dist (\zero,\partial F (x))\\
&= \sqrt{\sum_{i \in \cB} \left[\nabla_i f (x)+\frac{1}{\sigma} (x_i-\bar x_i)\right]^2_- +\sum_{i  \not \in \cB} \left (\nabla_i f (x)+\frac{1}{\sigma} (x_i-\bar x_i)\right)^2},
\end{aligned}
\end{equation}
with $\cB:= \left\{i\in [n_b]~|~x_i=0\right\} $ and $[\cdot]_{-}$ denoting the projection into $\bR_{-}$. Therefore, the stopping criterion~\eqref{stop:computable} is computable.

\subsubsection{Error residual function}\label{sec:erf}
With regard to the computable error residual function $\cE$, we provide three examples. 
\begin{example}[KKT-residual]
For any $x \in \bR^n$ and $\lambda \in \bR^m$, define the error residual function $\cE_1$ as
\[
 \cE_1 (x,\lambda) :=
\norm{\left[c^\top x + b^\top \lambda; [ A^\top \lambda +c]_{-}^{n_b}; [Ax - b]_{+}^{m_I}\right]},
\]
then $\cE_1$ satisfies~\eqref{a:cEl} with
\[
\zeta = \bracket{\norm{ [b;c]}^2 + \norm{A}^2}^{1/2}
\]
\end{example}

 \begin{example}[\cite{Li2019An}]
For any $x \in \bR^n$ and $\lambda \in \bR^m$, define the error residual function $\cE_2$ as
\begin{equation}\label{eq:normalized KKT residual}
 \cE_2 (x,\lambda) := \max \left\{\frac{|c^{\top} x + b^{\top} \lambda|}{1+|c^{\top} x | + |b^{\top} \lambda| }, \frac{\norm{ [Ax - b]_{+}^{m_I}}}{1+\norm{b}} , \frac{\norm{[c+ A^{\top}\lambda]_{-}^{n_b} }}{1+\norm{c}} \right\},
\end{equation}
then $\cE_2$ satisfies~\eqref{a:cEl} with
\[
\zeta = \max \left \{ \norm{ [b;c]}, \frac{\norm{A}}{1+\norm{b}},  \frac{\norm{A}}{1+\norm{c}} \right\}.
\]
\end{example}

\begin{example}[\cite{NIPS2017_ADMM,Yen2015Sparse}] For any $x \in \bR^n$ and $\lambda \in \bR^m$, define the error residual function $\cE_3$ as
\[
 \cE_3 (x,\lambda) := \max \left\{\frac{|c^{\top} x + b^{\top} \lambda|}{\max\{1,|c^{\top} x |\}},\norm{ [Ax - b]_{+}^{m_I}}_{\infty}, \norm{[c+ A^{\top}\lambda]_{-}^{n_b} }_{\infty} \right\},
\]
then $\cE_3$ satisfies~\eqref{a:cEl} with
\[
\zeta = \max \left \{ \norm{ [b;c]}, \max_{i \in [m]} \norm{a_i} , \max_{i \in [n]} \norm{A_i} \right\}.
\]
\end{example}
\begin{remark} The three examples of error residual functions  $\cE_1$,  $\cE_2$  and $\cE_3$ above are all computable and  satisfy~\eqref{a:cEl} for some $\zeta > 0$ upper bounded by
\begin{equation}\label{eq:upper zeta}
 \zeta\leq \norm{ [b;c]} + \norm{A}.
\end{equation}
\end{remark}
We just verified that Assumption~\ref{ass:AGPPAF} holds for the LP problem~\eqref{P} and the stopping criterion~\eqref{stop:computable} is computable, so we can apply AGPPA to solve it.

\subsection{Complexity results} 
In this subsection, we show  the complexity bound of AGPPA applied to the LP problem~\eqref{P}.
\subsubsection{Qualified inner solvers}
Based on~\eqref{LP:functions}, we know $g$ can be separated as
\[
g(x) = \sum_{i = 1}^{n} g_i(x_i),
\]
where for any $i \in [n_b]$,
\[
g_i(x_i) = \left\{
\begin{aligned} 
&0 && \operatorname{if} x_i \ge 0,\\
&+ \infty && \operatorname{otherwise},
\end{aligned}
\right.
\]
and for any $i = n_b+1, \cdots, n$, $g_i(x_i) = 0$. In addition, based on~\eqref{LP:conjugatefunctions}, we know $h^*$ can be separated as
\[
h^*(\lambda)  = \sum_{i=1}^m  h^*_i(\lambda_i),
\]
where for any $i \in [m_I]$,
\[
h^*_i(\lambda_i) = \left\{
\begin{aligned} 
&b_i\lambda_i && \operatorname{if} \lambda_i \ge 0,\\
&+ \infty &&  \operatorname{otherwise},
\end{aligned}
\right.
\]
and for any $i = m_I + 1, \cdots, m$, $h^*_i(\lambda_i) = b_i \lambda_i$.
Thus AdaRES~\cite{fercoq2019adaptive}, APPROX~\cite{Fercoq2018Restarting}, Katyusha~\cite{Allen2016Katyusha} are suitable algorithms for $\cA_F$ in Algorithm~\ref{AlgAlg:APPROX} based on the fact that $g$ and $h^*$ are separable. 

Therefore, if per-iteration cost of Algorithm~\ref{APPROX} is determined by operation $\cA_F$, the complexity bound of AGPPA applied to the LP problem~\eqref{P} derives directly from~\eqref{a:floglo}, by specifying the constants $\vartheta_1$, $\zeta$ and  $L_0$.
 Note that Assumption~\ref{ass:f0L} holds with 
 \begin{align}\label{a:LL0}
 L_0=0,
 \end{align} 
 the bound for $\zeta$ is already given by~\eqref{eq:upper zeta}, and $\vartheta_1$ is inner solver related.  It remains to show that per-iteration cost of Algorithm~\ref{APPROX} is determined by operation $\cA_F$. 
\subsubsection{Per-iteration cost of Algorithm~\ref{AlgAlg:APPROX}}

We claim that the per-iteration cost of Algorithm~\ref{AlgAlg:APPROX} for $F$ defined in~\eqref{LP:fphi} is determined by operation $\cA_F$ since the last four operations  in Remark~\ref{rm:ic}  cost at most $O(\nnz(A))$. Details are given in the following analysis.

Based on~\eqref{LP:lambda} and~\eqref{LP:nablaf}, we see that $
\Lambda (x, \bar \lambda, \sigma)$ and thus $\nabla f (x)$ can be computed in 
 $O (\nnz (A))$ operations. 
  Then, the computation of the function value $F (\cdot)$  defined in~\eqref{LP:fphi} is clearly upper bounded by $O(n)$ when $
\Lambda (x, \bar \lambda, \sigma)$  is known. In addition,  the proximal operator  $\cG_F$ defined in~\eqref{eq:ProximalGradientOperator} is 
$$
\cG_F (x)=\left[ \frac{L\sigma\left (x-\frac{1}{L}\nabla f (x)\right)+\bar x}{L\sigma+1}\right]^{n_b}_{+},
$$ 
which can be computed in $O (n)$ when $\nabla f (x)$ is known. 
   Finally $\dist (\zero,\partial F (x))$ defined in \eqref{LP:partialF} can also be computed in no more than $O (n)$ operations when $\nabla f (x)$ is known.
   Thereby,  the last four operations  in Remark~\ref{rm:ic}  cost at most $O(\nnz(A))$. 

Therefore, the upper bound of per-iteration cost  of Algorithm~\ref{AlgAlg:APPROX} for $F$ defined in~\eqref{LP:fphi}  satisfies~\eqref{a:cde} with parameters $\vartheta_1, \vartheta_2, \Upsilon$ and $\iota$ given in~\eqref{param:AdaRES},~\eqref{param:APPROX} and~\eqref{param:Katyusha} for $\cA_F$ being AdaRES, APPROX and Katyusha respectively. Then, we can choose  $\cA_F$ as the one from APPROX and Katyusha  with better complexity bound, which leads to
\begin{align}\label{a:ass4cons}
\vartheta_1 = \min \left\{ \max\limits_{i\in[n]} \norm{a_i}, \frac{ \norm{A}_F}{\sqrt{m}} \right\} \nnz(A).
\end{align}

\subsubsection{Overall complexity bound}
We are now in the position to deduce the overall complexity of AGPPA applied to the LP problem~\eqref{P}. Hereinafter, we shall measure the \textit{batch complexity}, which refers to the number of passes over data, i.e., the number of elementary operations divided by $\nnz(A)$.
\begin{theorem}\label{LP:bacth complexity}
Apply AGPPA to solve the LP problem~\eqref{P} with $\cM=I$ and the error residual function $\cE$ being $\cE_1, \cE_2$ or $\cE_3$ given in Section~\ref{sec:erf}. Let each IGPPA step, i.e., minimizing $F$ defined in~\eqref{LP:fphi}, be solved by Algorithm~\ref{AlgAlg:APPROX} or Algorithm~\ref{cQ_cAcG} with $\cA_F$ being the one from APPROX and Katyusha with better complexity. For any starting primal dual pair $(x^0,\lambda^0)$, let
\begin{align}\label{a:rdef}
r=\norm{(\bar x^0, \bar \lambda^0)} + \dist ((x^{0},\lambda^0) ,\Omega) +  \frac{ \gamma\varsigma \eta_0}{ (\varsigma -1) (1-\varrho_{\eta})},
\end{align}
with $(\bar x^0, \bar \lambda^0)$ being the projection of $(x^0,\lambda^0)$ into $\Omega$. Let  $\kappa_r$ be the constant satisfying~\eqref{growthcondition} for $T=T_{\ell}$.
With probability at least $1-p$, AGPPA finds a solution satisfying $\cE (x,\lambda)\leq \epsilon$ 
with the batch complexity bounded by
\begin{equation}\label{eq:LPcomplexity}
\cO \left (  \vartheta_1 \kappa_r \ln \kappa_r \ln  \left (\frac{ \zeta r \kappa_r }{\epsilon}\right)  \ln  \bracket{\frac{r \kappa_r  \norm{A}}{p}  \ln \frac{ \zeta r \kappa_r}{\epsilon}}\right),
\end{equation}
 where  $\kappa_r=O(\theta r)$, $\vartheta_1 = \min \left\{ \max\limits_{i\in[n]} \norm{a_i}, \frac{ \norm{A}_F}{\sqrt{m}} \right\}$ and $\zeta\leq  \norm{ [b;c]} + \norm{A} $. 
\end{theorem}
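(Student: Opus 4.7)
The plan is to assemble the pieces already established in the excerpt and specialize them to the LP setting. First I would verify that all the hypotheses required to invoke Theorem~\ref{thm:6} (and its refinement Corollary~\ref{cor:complexity bound of AGPPA}) are in force for problem~\eqref{P}. Assumption~\ref{ass:AGPPAF} is dispatched as follows: the nonemptyness of $\Omega = T_\ell^{-1}(\zero,\zero)$ is granted by the existence of optimal primal-dual pairs for the LP; the bounded metric subregularity of $T_\ell$ was established via Lemma~\ref{LP:kappa bound} and yields~\eqref{a:kapprTell}, namely $\kappa_r \le \theta(r^2+1)^{1/2} = O(\theta r)$; and $\cE_1,\cE_2,\cE_3$ from Section~\ref{sec:erf} are computable residual functions satisfying~\eqref{a:cEl} with $\zeta \le \norm{[b;c]} + \norm{A}$ by~\eqref{eq:upper zeta}. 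Assumption~\ref{ass:f0L} holds with $L_0 = 0$ by~\eqref{a:LL0}, since $f_0(x) = c^\top x$ is affine.

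Next I would justify Assumption~\ref{assume:Hoodst}. The inner problem~\eqref{a:mFst} at each IGPPA step reduces to minimizing the function $F$ in~\eqref{LP:fphi}. Because the LP $g$ and $h^\ast$ are both coordinate-separable, both restarted APPROX (Example~\ref{ex:APPROX}) and Katyusha (Example~\ref{ex:Katyusha}) are admissible choices for $\cA_F$ and fulfil the HOOD property~\eqref{a:erd}; picking whichever gives the tighter constant yields~\eqref{a:ass4cons}. For the per-iteration cost, I would invoke the argument already laid out in the paragraph just before~\eqref{a:ass4cons}: with $\Lambda(x,\bar\lambda,\sigma)$ given by~\eqref{LP:lambda} and $\nabla f$ by~\eqref{LP:nablaf}, each of the auxiliary operations enumerated in Remark~\ref{rm:ic} (computing $F$, $\cG_F$, and $\dist(\zero,\partial F(x))$) costs at most $O(n)$ once $\Lambda$ and $\nabla f$ are formed, so the overall per-iteration cost is dominated by $O(\nnz(A))$ plus the cost of one $\cA_F$ step. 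Consequently Assumption~\ref{ass:4} holds with the constants in~\eqref{a:ass4cons}, $\iota = 1$, $\vartheta_2 = 0$ (since $L_0 = 0$) and $\Upsilon = \nnz(A)$.

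With all assumptions verified, I would invoke Corollary~\ref{cor:complexity bound of AGPPA}, or equivalently the packaged bound~\eqref{a:floglo}, for the total number of elementary operations. Setting $L_0 = 0$ collapses the inner $\ln(\cdot)$ argument from $L_0 + \norm{A}$ to $\norm{A}$, giving the elementary operation count
\[
\cO\!\left(\vartheta_1\,\kappa_r\,\ln\kappa_r\,\ln\frac{\zeta r\kappa_r}{\epsilon}\,\ln\!\bracket{\frac{r\kappa_r\norm{A}}{p}\ln\frac{\zeta r\kappa_r}{\epsilon}}\right).
\]
Dividing this by $\nnz(A)$ (to pass from elementary operations to batch complexity) replaces the $\nnz(A)$ factor inside $\vartheta_1$ from~\eqref{a:ass4cons} by $1$, producing $\vartheta_1 = \min\{\max_i\|a_i\|,\|A\|_F/\sqrt{m}\}$ as stated. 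Finally, I would check that the $r$ defined in~\eqref{a:rdef} is consistent with the $r$ from~\eqref{r:redef} used in Theorem~\ref{thm:AGPPA}: since $\cM = I$, we have $\lambda_{\min}(\cM) = 1$ and the $\cM$-distance coincides with the Euclidean distance, so~\eqref{r:redef} reduces exactly to~\eqref{a:rdef}, after noting that $\sup_k\|z^k\|$ is bounded by $r$ via~\eqref{a:zkzo}. This yields the bound~\eqref{eq:LPcomplexity}.

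The whole argument is a bookkeeping exercise; the only genuinely nontrivial ingredient, namely the bound $\kappa_r = O(\theta r)$ converting the abstract bounded metric subregularity constant into an LP Hoffman-type constant, was already carried out in Lemma~\ref{LP:kappa bound}. The mildly delicate step I expect to write carefully is the handling of the probability: Theorem~\ref{thm:6} already absorbs the union bound across all $\bar N$ IGPPA steps into a single factor of $1-p$, so I just need to confirm that no further probabilistic loss is incurred when we specialize the constants; this is automatic because the specialization affects only the deterministic per-iteration cost $\cT(\sigma)$ and the deterministic bounds $\bar\sigma, \bar N, \bar\eta$ feeding into $\bar\zeta_1,\bar\zeta_2$.
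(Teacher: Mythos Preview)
Your proposal is correct and follows the same approach as the paper: the paper's proof is a one-liner that says to plug the LP-specific estimations~\eqref{a:kapprTell}, \eqref{a:LL0}, \eqref{eq:upper zeta}, and \eqref{a:ass4cons} into the general bound~\eqref{a:floglo}, which is exactly the assembly you describe (you simply spell out the verification of the hypotheses and the bookkeeping in more detail than the paper does).
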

\begin{proof}
It suffices to plug in~\eqref{a:floglo} the estimations~\eqref{a:kapprTell},~\eqref{a:LL0},~\eqref{eq:upper zeta} and \eqref{a:ass4cons}.
\end{proof}

 \subsection{Comparison with related works}\label{section:LPcomplexity}

 In the past decades, numerous research works have been devoted to the development of numerical solutions for the LP problem. Commonly used LP solvers ubiquitously implement the interior-point method (IPM)~\cite{Nazareth2004The,Yurii2006Lectures,Kojima1988A} and the simplex method~\cite{Klee1970How,Simplex,Dantzig1990Origins}. These two classical methods are recognized to be highly efficient for low or medium sized LP problems.  However, their complexity bounds  are known to be at least quadratic in the number of variables or constraints~\cite{Nocedal2006Numerical}. With the ever-increasing size of the  LP problem to be solved, searching for more efficient solvers in the large-scaled setting has  attracted a lot of attention.   
 
Previous to our work, many papers have studied the applications of PPA to large-scale LP problems,  including the (proximal) ALM based solvers~\cite{Yen2015Sparse,G1992Augmented,ALMfoLP,Li2019An} and the ADMM based solvers~\cite{NIPS2017_ADMM,OldADMM,SCS}.  To facilitate the comparison, we shall omit the constants other than $r$, $\epsilon$ and $\kappa_r$ from the logarithmic terms appearing in batch complexity bounds. Namely, we simplify the batch complexity bound of AGPPA  given in~\eqref{eq:LPcomplexity} as follows:
\begin{equation}\label{eq:LPcomplexitys}
\cO \left (  \min \left\{ \max\limits_{i\in[n]} \norm{a_i}, \frac{ \norm{A}_F}{\sqrt{m}} \right\} \kappa_r \ln \kappa_r \ln  \left (\frac{ r\kappa_r }{\epsilon}\right)  \ln \left(r \kappa_r \ln \frac{ r \kappa_r}{\epsilon}\right)\right).
\end{equation}
 Plugging~\eqref{a:kapprTell} into~\eqref{eq:LPcomplexitys}, we get the batch complexity bound of AGPPA:
 \begin{align}\label{a:agppacm}
 \cO \left (  \min \left\{ \max\limits_{i\in[n]} \norm{a_i}, \frac{ \norm{A}_F}{\sqrt{m}} \right\} \theta r \ln (\theta r) \ln  \left (\frac{ \theta r }{\epsilon}\right)  \ln \left( \theta r \ln \frac{ \theta r }{\epsilon}\right)\right)
 \end{align}

Before giving  more details, we outline the main contribution of our work in comparison to the closely related works: 
 
 \begin{enumerate}
 \item We provide an iteration complexity bound~\eqref{eq:LPcomplexity} of PPA based LP solver. In  contrast,~\cite{Li2019An} shows the asymptotic superlinear convergence of PPA outer iterations.
 \item Our algorithm and theoretical analysis do not require any knowledge of the bounded metric subregularity parameter $\kappa_r$ satisfying~\eqref{growthcondition} for $T=T_{\ell}$.  In~\cite{Yen2015Sparse}, the theoretical results were derived by requiring  the proximal regularization parameter to be proportional to  the bounded metric subregularity parameter $\kappa$ of $T_{d}$ as defined in~\eqref{eq:Td}, which is generally unknown. 
 \item The inner problem stopping criterion~\eqref{stop:computablest} is implementable, which is not the case for the stopping criteria proposed in~\cite{NIPS2017_ADMM,Yen2015Sparse}. 
  \item Compared with the existing complexity bounds of the related methods (see Table~\ref{table:comparisoncomplexity}), the complexity bound of AGPPA has weaker dependence on the dimension of the problem and  on the Hoffman constant of the associated KKT system.
 \item Our algorithm is directly applicable to the LP problem in the general form of~\eqref{P}, while~\cite{Li2019An,NIPS2017_ADMM} were specifically designed for the standard LP problem as~\eqref{Sun:P} or its dual problem~\eqref{eq:D}. It is true that  the general LP problem can be transformed into the  standard form  or its dual problem. However, applying such a transformation at first may lead to worse performance in both a theoretical and a practical perspective.
 \end{enumerate}
 
\subsubsection{Transformation of the LP problem}\label{sec:tlp}
There are many methods specifically designed for LP problem  of the following form:
\begin{equation}
\min_{x \in \bR^n}~ c^{\top}x  ~~~\st ~~~Ax = b;\enspace  x_{i}\ge  0,  \forall i\in[n_b],
\label{Sun:P}
\end{equation}
or its dual problem
\begin{equation}
\min_{ \lambda \in \bR^m}~ b^{\top} \lambda~~~\st ~ -A_b^{\top}\lambda \le c_b ; \enspace - A_f^{\top}\lambda  =  c_f.
\label{eq:D}
\end{equation}
 Note that problem~\eqref{P} can be transformed into the form of~\eqref{Sun:P} as:
\begin{equation}\label{eq:trans1}
\min_{x\in \bR^n} c^\top x~\st~A_Ix + y = b_I, A_Ex = b_E, y \ge 0, x_{i}\ge  0, i\in[n_b].
\end{equation}
Denote
\[
\bar n := m_I + n,~\bar n_b := m_I +n_b,~\bar m_I := 0,~\bar m := m,
\]
and
\[
\bar c := [0;c],~\bar b_E := [b_I;b_E],~\bar x := [y;x],~ \bar A_E:=
\left(\begin{aligned}
&I_{m_I}&&A_I\\
&0&&A_E
\end{aligned}\right),
\]
then~\eqref{eq:trans1} is equivalent to 
\begin{equation}\label{eq:trans2}
\min_{ x\in \bR^{\bar n}} {\bar c}^\top  x~\st~\bar A_E x = \bar b_E; \enspace  x_{i}\ge  0, \forall i\in[\bar n_b].
\end{equation}
Same as the bound~\eqref{a:agppacm}, the batch complexity bound of AGPPA for problem~\eqref{eq:trans1}, or equivalently~\eqref{eq:trans2} is:
\begin{equation}\label{complexity:enlarge}
 \cO \left (  \min \left\{\max\left\{ \max\limits_{i\in[n]} \norm{a_i},  1\right\}, \frac{ \norm{A}_F + m_I}{\sqrt{m}} \right\} \bar\theta\bar r \ln (\bar \theta \bar r) \ln  \left (\frac{ \bar \theta \bar r }{\epsilon}\right)  \ln \left(\bar \theta \bar r \ln \frac{\bar \theta \bar r }{\epsilon}\right)\right),
 \end{equation}
 where $\bar r$ is the upper bound on the norm of all iteration points $(\bar x, \lambda)$, and $\bar \theta$ is the smallest constant satisfying
\begin{equation}\label{eq:enlargetheta}
\begin{aligned}
&\dist \bracket{\bracket{y,x,\lambda}, \bar \Omega}
\le \\
& \bar \theta
\norm{ \left [c^{\top}x + b^{\top}\lambda; [ A^{\top}\lambda +c]_{-}^{n_b};  [\lambda_{I}]_{+}^{m_I}; A_Ix + y -b_I; A_Ex -b_E\right ]},
\end{aligned}
\end{equation}
for all $(y, x, \lambda) \in \bR^{m_I + n + m}$ with $y \ge 0, x_{b} \ge 0$.
Here, $\bar \Omega$ is the set of saddle points of the Lagrangian function derived from problem~\eqref{eq:trans2}. Specifically, let 
\[
y = - [A_I x - b_I]_{-}^{m_I} \ge 0,~\lambda_{I} \ge 0,
\]
then~\eqref{eq:enlargetheta} reduces to 
\begin{equation}\label{eq:enlargetheta1}
\dist \bracket{\bracket{y,x,\lambda}, \bar \Omega} \le \bar \theta
\norm{ \left [c^{\top}x + b^{\top}\lambda; [ A^{\top}\lambda +c]_{-}^{n_b}; [Ax  -b]_{+}^{m_I}\right ]},
\end{equation}
for all $(x, \lambda) \in \bR^{ n + m}$ with $x_{b} \ge 0, \lambda_I \ge 0$.
Compare~\eqref{eq:enlargetheta1} with~\eqref{a:thetaSdef}, we learn that
\begin{equation}\label{eq:transformation result}
\theta \le \bar \theta.
\end{equation}
Thus, it is reasonable to claim that bound~\eqref{complexity:enlarge} performs worse than bound~\eqref{a:agppacm}. Similarly, the transformation from problem~\eqref{P} to the form of~\eqref{eq:D} will also lead to a worse complexity bound. 
 In addition, numerical experiments also suggest a loss of efficiency after applying such a transformation (see Section~\ref{section:numericalresults}).

 \subsubsection{Comparison with SNIPAL}
 
  When $n_b=0$ in~\eqref{P}, function $F$ defined in~\eqref{LP:fphi} becomes smooth with $\nabla F$ being semismooth, and thus the inner problem can be solved by the semismooth Newton (SSN) method. Based on this property, Li et al~\cite{Li2019An} proposed a semismooth Newton based inexact proximal  augmented Lagrangian  (SNIPAL) method  for  the LP problem.  
 In particular,  they focused on  how to  exploit the structure of the matrix $A$ and of the generalized Hessian of $F$ to efficiently solve each Newton system in the high dimensional setting $ (m\gg n)$. Asymptotic superlinear convergence for SNIPAL is obtained by requiring $\{\sigma_k\}_{k\geq 0}$ to tend to infinity along with the local superlinear convergence of  the SSN method.    
In this paper, we mainly focus on  giving explicit update formulas for $\{\sigma_k\}_{k\geq 0}$ and the overall complexity analysis. 
In addition, we emphasize that SNIPAL is only applicable to the dual form of the standard LP problem~\eqref{eq:D} while AGPPA is applicable to the general LP problem~\eqref{P}. 

\subsubsection{Comparison with linearized ADMM (LADMM)}\label{sec:LADMM}
LADMM~\cite{YuanADMM}  is a method of multipliers with inner problems exactly solved, which leads to a larger linear convergence rate, and thus a slower outer iteration convergence. To reach an $\epsilon$-KKT solution, i.e., to find a primal dual pair $ (x,\lambda)$ with $\cE_1 (x, \lambda) \le \epsilon$, LADMM has a batch complexity bound: 
\begin{equation}\label{eq:complexity:LADMM}
O \bracket{\norm{A}^2\kappa_{r'}^2 \ln  \frac{1}{ \epsilon}},
\end{equation}
which by~\eqref{a:kapprTell} yields:
\begin{align}\label{a:erewer}
O \bracket{\norm{A}^2\theta^2(r')^2 \ln  \frac{1}{ \epsilon}}.
\end{align}
 Here, $r'$ is an upper bound on the norm of all iteration points of LADMM, and we know that
  $$
 r'\leq \norm{(\bar x^0, \bar \lambda^0)} + \dist ((x^{0},\lambda^0) ,\Omega)\leq r .
 $$ 
 Comparing~\eqref{a:agppacm} with~\eqref{a:erewer}, we learn that the worst batch complexity bound of AGPPA scales better than that of LADMM for large-scale problems, and also has weaker dependence on the Hoffman constant $\theta$ of the KKT system~\eqref{eq:omegas}.

\subsubsection{Comparison with an  inexact ADMM (iADMM)}\label{sec:iADMM}
In~\cite{NIPS2017_ADMM}, Wang and Shroff proposed to apply  iADMM to solve the LP problem in the form of~\eqref{Sun:P}. They added  auxiliary variable $y$ and $n$ equalities $y = x$  to~\eqref{Sun:P}:
\begin{equation}
\min_{x \in \bR^n}~ c^{\top}x  ~~~\st ~~~Ax = b, y = x, y_{i}\ge  0, i\in[n_b],
\label{iADMM:P}
\end{equation}
and then applied the classical inexact ADMM to~\eqref{iADMM:P}.
 They proposed to approximately solve every inner problem  with an accelerated coordinate descent method~\cite{Allen2015Even}, until the function value at the current point  is close enough to the optimal value. 
 However, the proposed stopping criterion is not implementable as the optimal value of the inner problem is unknown.

It was shown in~\cite{NIPS2017_ADMM} that in order to have an $\epsilon$-dual optimal solution (i.e., a dual solution with distance to the dual optimal solution set bounded by $\epsilon$),  the batch complexity of iADMM is: 
\begin{equation}\label{eq:complexity:iADMM}
\cO \bracket{\max_{i \in [n]} \norm{a_i}  \bracket{r_x\norm{A} +r_z}^2\theta_{S^*}^2 \ln \frac{1}{\epsilon}\ln\frac{\theta_{S_*} }{\epsilon}}.
\end{equation}
Here, $r_x$ and $r_z$ are  upper bounds on the primal and dual iteration points respectively, and $\theta_{S_*}$ is the  Hoffman constant associated with the KKT system of problem~\eqref{iADMM:P}. In particular, with similar proof to that of~\eqref{eq:transformation result}, we have
\[
 \theta \le \theta_{S_*}.
\]

\subsubsection{Comparison with AL\_CD}\label{sec:ALCD}

The algorithm in~\cite{Yen2015Sparse}, named as AL\_CD, solves the LP problem~\eqref{P} by combining an  inexact augmented Lagrangian method with a randomized coordinate descent method~\cite{Yen2015Sparse} for the inner problems. This amounts to apply the proximal point method to the dual problem:
\[
\min_{ \lambda \in \bR^m}~\{b^{\top} \lambda + \delta_{\mathcal{F}_d} (\lambda)\}
\]
with
\[
\mathcal{F}_d =\{ \lambda \in \bR^m |   -A_b^{\top}\lambda \le c_b , - A_f^{\top} \lambda  =  c_f ,\lambda_{j}\ge  0, j\in[m_I]\}.
\]
 Denote by $\tilde r$ an upper bound on the norm of all dual iteration points  of AL\_CD, and $\kappa$ the bounded metric subregularity parameter such that:
\begin{equation}\label{growthcondition:d}
\dist \bracket{\lambda, T_d^{-1}(\zero)} \le \kappa \dist \bracket{0, T_d (\lambda)}, ~\forall \lambda~\st  \|\lambda \|\le \tilde r.
\end{equation}
Here, $T_d$  is the polyhedral multifunction defined by:
\begin{equation}\label{eq:Td}
T_d:  \lambda\rightarrow  b + \partial  \delta_{\mathcal{F}_d} (\lambda).
\end{equation}
The proximal regularization parameter of AL\_CD is required to be proportional to the bounded metric subregularity parameter $\kappa$, which is generally unknown.

 The inner problems of AL\_CD are non-strongly convex without 
 the proximal term for the primal variable. The stopping criterion of every inner problem in AL\_CD is also conceptual, which requires to know the optimal  solution of the inner problem. To obtain an $\epsilon$-dual optimal solution, the batch complexity bound of AL\_CD is:
\begin{equation}\label{eq:complexity:ALCD}
\cO \bracket{  \max\limits_{i\in[n]} \norm{a_i}^2  \theta_{\cS} \kappa \ln^2\bracket{ \frac{\kappa}{\epsilon}}},
\end{equation}
where 
 $\theta_{\cS}$ is the Hoffman's constant that depends on the polyhedron formed by the set of optimal solutions of the inner problems.

 We summarize the comparison of  batch complexity bounds in Table~\ref{table:comparisoncomplexity}.
  
 \begin{center}
\begin{table}
\centering
\begin{tabular}{c | c |c}
Algorithm & Measure &  Batch complexity bound \\
\hline
AGPPA & $\epsilon$-KKT  &$\cO \left (  \min \left\{ \max\limits_{i\in[n]} \norm{a_i}, \frac{ \norm{A}_F}{\sqrt{m}} \right\} \theta r \ln (\theta r) \ln  \left (\frac{ \theta r }{\epsilon}\right)  \ln \left( \theta r \ln \frac{ \theta r }{\epsilon}\right)\right)$\\
LADMM~\cite{YuanADMM} &  $\epsilon$-KKT & $O \bracket{\norm{A}^2 \theta^2 {r'}^2 \ln  \frac{1}{ \epsilon}}$ \\
iADMM~\cite{NIPS2017_ADMM} & $\epsilon$-dual &  $
\cO \bracket{\max\limits_{i\in[n]} \norm{a_i}   \bracket{r_x\norm{A} +r_z}^2\theta_{S^*}^2 \ln \frac{1}{\epsilon}\ln\frac{\theta_{S^*} }{\epsilon}}$\\
ALCD~\cite{Yen2015Sparse} & $\epsilon$-dual  & $
\cO \bracket{  \max\limits_{i\in[n]} \norm{a_i}^2  \theta_{\cS} \kappa \ln^2\bracket{ \frac{\kappa}{\epsilon}}}$\\
\noalign{\smallskip}\hline
\end{tabular}
\caption{Comparison of batch complexity bounds. More details on the constants can be found in Section~\ref{sec:LADMM}, Section~\ref{sec:iADMM} and Section~\ref{sec:ALCD}.}
\label{table:comparisoncomplexity} 
\end{table}
 \end{center}

\subsection{Hybrid inner solver}
Recall that we proposed in Algorithm~\ref{cQ_cAcG} to combine  an algorithm satisfying the HOOD property with any other algorithm with efficient numerical  performance while keeping the validity of the complexity bound~\eqref{a:floglo}.  We propose to combine APPROX or Katyusha  with the inexact projected semismooth Newton (PSSN) method, given in Section~\ref{appendix:PNCG} of the appendix. 

\section{Numerical results}\label{section:numericalresults}
When there is at least one coordinate with nonnegative constraint, we firstly transform the  problem into  the form of~\eqref{eq:D} as shown in Sectiion~\ref{sec:tlp}, then apply AGPPA to the transformed problem, 
 and we call the resulting algorithm AGPPAi. 
In this section, we compare the numerical performance of AGPPA\footnote{Our solver  AGPPA has been released in: \url{https://github.com/lumeng16/APPA}.}  with  AGPPAi, AL\_CD\footnote{The solver AL\_CD has been released in: \url{http://ianyen.site/LPsparse/}.}~\cite{Yen2015Sparse}, SCS\footnote{The solver SCS has been released in: \url{https://github.com/cvxgrp/scs}.}~\cite{SCS}, and  the commercial LP solver Gurobi\footnote{The solver Gurobi has been released in: \url{https://www.gurobi.com/  }.} version 9.1.1.    
Here, AL\_CD is an inexact ALM, SCS is an inexact ADMM, and  Gurobi includes the interior-point method (IPM) and the simplex methods. We employ the  error  residual  function  $\cE_2$ defined in~\eqref{eq:normalized KKT residual} for accuracy measure. %
   For AGPPA, AGPPAi,  AL\_CD, SCS and IPM, we stop the algorithm when either  the error residual $\cE_2$ is smaller than a threshold or the maximal running time is reached.  However, for the simplex method, we show the computation time when Gurobi terminates or the maximal running time is reached,   since it is not convenient to compute the error residual before the termination.

Although here  we compare the algorithms in single thread, AGPPA are applicable to multi-threads. We list some specific settings of the algorithms: 
\begin{enumerate}
\item 
We apply AL\_CD to both primal and dual LP problems and present the best result.
\item SCS has two versions, and the difference of them is the way to solve the linear systems generated by the inner problems. One applies an indirect solver based on the conjugate gradient method, and the other  uses a direct solver,  which applies a cached LDL factorization and may require larger memory. The direct solver works inefficiently for the test data sets, and thus we present the best result of SCS with  the indirect solver to both primal and dual LP problems. 
\item For Gurobi, we set method = 2 to use IPM and method = -1 to use simplex methods. Note that when method = -1 under the setting of single thread, it will automatically choose a method from the primal simplex method and the dual simplex method. For both IPM and simplex methods, if the presolve phase consumes too much time and does not reduce problem sizes, we turn off the presolve phase.
\item If $m_I > n_b$, we apply AGPPA to form~\eqref{P}, otherwise, we apply AGPPA to its dual form.  
\item Recall that the parameters of AGPPA are  $\rho$, $\sigma_0$, $\varrho_{\sigma}$, $\alpha$, $\gamma$, $\delta$, $\eta_0$, $\varsigma$, $\varrho_{\eta}$.  Let
\begin{equation}\label{choose:delta}
\delta =  \varrho_\delta*  \frac{ \rho - \sqrt{1-  \min \{\gamma, 2 \gamma -\gamma^2 } \}  }{1+\rho},
\end{equation}
with some $\varrho_\delta \in  (0,1)$. 
We set the initial guess for $\kappa_r$ as $1/\norm{A}_F$, then the initial proximal regularization parameter $\sigma_0$ is equal to
\begin{equation}\label{choose:sigma}
\sigma_0 = \frac{\alpha}{\norm{A}_F}.
\end{equation}
We set parameters  $\rho$, $\varrho_{\sigma}$, $\gamma$, $\eta_0$, $\varsigma$, $\varrho_{\eta}$ based on Table~\ref{table:input parameters}, and parameters $\delta$, $\alpha$ and $\sigma_0$ based on Table~\ref{table:deduced parameters}.
\end{enumerate}

\begin{table}
\centering
\begin{tabular}{lllllllll}
\hline\noalign{\smallskip}
parameter &$\cM$ & $\gamma$ & $\rho$ & $\varrho_\delta$& $\varrho_\eta$ & $\varrho_\sigma$ &$\varsigma$&$\eta_0$\\
\noalign{\smallskip}\hline\noalign{\smallskip}
value &$\cI$&1&0.7&0.9&0.9&5&1.1&1e16\\
\noalign{\smallskip}\hline
\end{tabular}
\caption{Default settings of input parameters}
\label{table:input parameters}
\end{table}

\begin{table}
\centering
\begin{tabular}{llll}
\hline\noalign{\smallskip}
 parameter  & $\delta$ & $\alpha$ & $\sigma_0$\\
\noalign{\smallskip}\hline\noalign{\smallskip}
equation &\eqref{choose:delta}&\eqref{eq:rho}&\eqref{choose:sigma}\\
\noalign{\smallskip}\hline
\end{tabular}
\caption{Default settings of deduced parameters}
\label{table:deduced parameters}
\end{table}

We numerically compare AGPPA with other algorithms in the mentioned settings above  for different types of LP problems, including LP problems generated from  L1-regularized multi-class support vector machine (L1-SVM)  problems, randomly generated sparse LP problems,    covering and packing  LP problems and some benchmark problems. We make comparison in three aspects: memory usage, running time to reach low precision ($\cE_2 (x,\lambda) \le$ 1e-3), and running time to reach medium precision ($\cE_2 (x,\lambda) \le$ 1e-5).   
For reference convenience, we also list the running time of the simplex method in the last column.  However, recall that this is  the running time when the simplex method terminates, and thus returns a high accuracy solution.

In all tables, o.m. means the test method for the test data set is ``out of memory (96GB)'', and *  means the running time reaches the maximal running time that is set for the test method (see details in the title of tables). Our code is written in C++ and all the experiments are conducted in the cluster consisted of 104 compute nodes outfitted with two 10-core Intel Xeon E5-2600 v3 (Haswell) processors.

\subsection{ L1-regularized multi-class SVM}
We consider LP instances  generated from the  L1-SVM problem, which is a classical machine learning problem. Let $x_i \in \mathbb{R}^{p_d}$  be the collected data, where  $p_d$ is the number of features. Let $y_i \in [k]$  be labels, where $k$ is the number of classes. Set $p_n$  as the number of samples. Then following the same notations in~\cite{Yen2015Sparse}, we show L1-SVM as:
\[\begin{aligned}
&\min&& \lambda \sum_{j=1}^k \norm{w_j}_{1} + \sum_{i=1}^{p_n} \xi_i\\
&\st && w_{y_i}^{\top} x_i - w_{j}^{\top}x_i \ge e_i^j - \xi_i && \forall i \in  [p_n],j \in[k],
\end{aligned}\]
where $e_i^j = 0$ if $y_i = j,e_i^j =1$ otherwise. Set $w_j = [w_j]_{+} + [w_j]_{-}$, then $\norm{w_j}_{1}  = {\bf 1}^{\top}  ([w_j]_{+}  - [w_j]_{-}) $ with $[w_j]_{+} \ge 0$ and $[w_j]_{-} \le 0.$ We can transform SVM into a LP problem~\eqref{P} with 
\[m_I =  (k-1)p_n,~m_E =0, ~  n_b = 2kp_d + p_n ,~ n_f = 0.\]
In the simulation, if the data is dense, we conventionally  normalize the data:
\[
x_{ij} = \frac{x_{ij}  - \frac{1}{p_d}\sum_{j=1}^{p_d} x_{ij}}{ \sqrt{ \sum_{j=1}^{p_d} \bracket{x_{ij}  - \frac{1}{p_d}\sum_{j=1}^{p_d} x_{ij}}^2}}, ~\forall j = 1, \dots, p_d,
\]
 otherwise, we scale the data:
 \[
x_{ij} = \frac{x_{ij} }{ \norm{x_i}}, ~\forall j = 1, \dots, p_d,
\]
where $x_{ij}$ are the $j$th coordinate of vector $x_i$.   We set the penalty parameter $\lambda$ to be $1$. All data sets in Table~\ref{L1SVM_data}  come from the LIBSVM\footnote{LBSVM website: \url{https://www.csie.ntu.edu.tw/~cjlin/libsvmtools/datasets/}} library.

In Table~\ref{L1SVM_data}, $m = m_I$, $n = n_b$ and sparsity means the percentage of non-zero numbers of matrix $A$ in the LP problem~\eqref{P} transformed from the L1-SVM problem. The last column of Table~\ref{L1SVM_data} refers to the size of the MPS file used to store the data. Note that the LP problem generated from the L1-SVM  problem has special structure. When we apply Gurobi to such  LP problems, during the presolve phase, it will reduce a number of rows or columns due to the dependence, which may substantially improve the performance of Gurobi. However, for other algorithms, including AGPPA, we use the default settings without presolve phase. Note that AGPPA still has a satisfactory advantage over Gurobi even without the acceleration from presolve phase. The numerical results are shown in Tables~\ref{ram:SVM}, Table~\ref{low precision:SVM}, and Table~\ref{medium precision:SVM}.

\begin{table}
\centering
\begin{tabular}{lllllllll}
\hline\noalign{\smallskip}
   Data  &$p_n$   & $p_d$ & $k$ &$m$&$n$ & sparsity & size\\
  \noalign{\smallskip}\hline\noalign{\smallskip} 
    real-sim& 72309&20958&2&72309&156141&0.1320\% & 625 MB \\
    rcv1 & 15564&47236&51& 778200&4833636&0.0055\% & 8.5 GB \\
    news20 & 15935&62061&20& 302765&2498375&0.0128\% & 4.0 GB\\
    avazu&14596137&999990&2&14596137 &18596097&0.00033\% & 39 GB\\
    \noalign{\smallskip}\hline
  \end{tabular}
  \caption{Data statistics for LP problems transformed from L1-SVM problems}
  \label{L1SVM_data}
\end{table}

\begin{table}
\centering
  \begin{tabular}{llllll|l}
\hline\noalign{\smallskip}
    Data &AGPPA &AGPPAi& AL\_CD &SCS& IPM & Simplex \\
  \noalign{\smallskip}\hline\noalign{\smallskip} 
    real-sim& 0.78& \textbf{0.46}&0.71&0.76&14.9&1.2\\
     rcv1 &10.5&10.5& \textbf{9.5}&11.1&o.m.&18.1\\
     news20&5.1&5.0& \textbf{4.5}&5.3&o.m.&8.5\\
      avazu&\textbf{47.0}&47.4&77.3&54.5&77.3&77.3\\
\noalign{\smallskip}\hline
  \end{tabular}
    \caption{RAM (GB). o.m. means ``out of memory (96GB)''.}
        \label{ram:SVM}
\end{table}

\begin{table}
\centering
  \begin{tabular}{llllll |l }
\hline\noalign{\smallskip}
    Data &AGPPA &AGPPAi& AL\_CD &SCS& IPM & Simplex  \\
  \noalign{\smallskip}\hline\noalign{\smallskip} 
    real-sim& \textbf{29}&248&72&1.57e3&16286 &4166\\
     rcv1 & \textbf{811}&*& 1002& * &o.m.& 44650 \\
     news20&\textbf{462}&* &595& * &o.m. &132082 \\
      avazu&\textbf{10293}&10343&19485& * & * &*\\
\noalign{\smallskip}\hline
  \end{tabular}
    \caption{Time for precision 1e-3 (s). * means the running time reaches 24 hours for AGPPAi and 56 hours for other methods. The fifth column uses the default output format in the SCS package.}
        \label{low precision:SVM}
\end{table}

\begin{table}
\centering
  \begin{tabular}{llllll|l}
\hline\noalign{\smallskip}
    Data &AGPPA &AGPPAi& AL\_CD &SCS& IPM & Simplex \\
  \noalign{\smallskip}\hline\noalign{\smallskip} 
    real-sim& \textbf{204}&3373&815&7.73e4&23178&4166\\
     rcv1 &\textbf{3636}&*&6787&*& o.m.& 44650\\
     news20&\textbf{12407}&*&13481&*&o.m.&132082\\
      avazu&\textbf{15581}&19301& * &*&*& *\\
\noalign{\smallskip}\hline
  \end{tabular}
    \caption{Time for precision 1e-5 (s). * means the running time reaches 24 hours for AGPPAi and 56 hours for other methods. The fifth column uses the default output format in the SCS package.}
        \label{medium precision:SVM}
\end{table}

\subsection{Randomly generated sparse LP}
We apply AGPPA to the LP problem of the form
\begin{equation}\label{generated LP}
\min_{x \in \bR^n}~c^{\top} x~~~\st~~~Ax \le b,
\end{equation}
where $c \in \bR^n, b \in \bR^m$, and $A \in \bR^{m\times n}$. Referring to~\cite{Li2019An,Mangasarian2004A}, we generate large synthetic matrix $A$ by 
\[
A = \operatorname{sprand} (m,n,\operatorname{sparsity}); A = 100* (A-0.5*\operatorname{spones} (A));
\]
where sparsity means the percentage of nonzero numbers of matrix $A$.

Here, the LP problem~\eqref{generated LP} can be transformed into the general LP form~\eqref{P} with $m_I = m, m_E = 0, n_b = 0$ and $n_f = n$. Then, all data sets have $m_I > n_b$, and therefore AGPPA is automatically  applied to the primal form. Since there is no nonnegative constraints in the considered LP problem, AGPPAi is equivalent to AGPPA. The data statistics are given in Table~\ref{table:data generated sparse LP}, and the numerical results are shown in Tables~\ref{ram:M}, Table~\ref{low precision:M}, and Table~\ref{medium precision:M}.
\begin{table}
	\centering
	\begin{subtable}[t]{2in}
		\centering
		  \begin{tabular}{lllllllll}
\hline\noalign{\smallskip}
    Data   &$m$&$n $&sparsity & size\\
    \noalign{\smallskip}\hline\noalign{\smallskip} 
    M1 & 5e5&1e5&1e-3 & 1.7 GB\\
    M2 & 1e6&1e5&1e-3 & 3.3 GB\\
    M3 &1e6&2e5&1e-3&6.5 GB\\
          M4 &1e7&1e5&1e-4 & 3.8 GB \\
     M5 &1e7& 2e5&1e-4&6.9 GB\\
     M6 &1e7& 5e5&1e-4 & 17 GB\\
    \noalign{\smallskip}\hline
  \end{tabular}
	\end{subtable}
	\qquad
	\begin{subtable}[t]{2in}
		\centering
		  \begin{tabular}{lllllllll}
\hline\noalign{\smallskip}
    Data   &$m$&$n $&sparsity & size\\
    \noalign{\smallskip}\hline\noalign{\smallskip} 
    M7 & 5e5&1e5&1e-4 & 192 MB\\
    M8 & 1e6&1e5&1e-4 & 378 MB\\
    M9 &1e6&2e5&1e-4 & 1.6 GB\\
          M10 &1e7&1e5&1e-5 & 742 MB \\
     M11 &1e7& 2e5&1e-5&1.8 GB\\
     M12 &1e7& 5e5&1e-5& 2.1 GB\\
    \noalign{\smallskip}\hline
  \end{tabular}
	\end{subtable}
	\caption{Data statistics for randomly generated sparse LPs}
	\label{table:data generated sparse LP}
\end{table}

\begin{table}
  \centering
  \begin{tabular}{lllll|l}
\hline\noalign{\smallskip}
     Data & AGPPA &AL\_CD& SCS &IPM & Simplex \\
    \noalign{\smallskip}\hline\noalign{\smallskip} 
    M1 & 3.1 &2.5 & \textbf{2.1}& 40.1 & 92.3 \\
    M2 & 5.9 &4.6 & \textbf{4.1}&42& 41\\
    M3 & 11.3 &9.5 & \textbf{7.5}&o.m.& 91.8 \\
     M4 & \textbf{6.1} & 7.3 &10.1& 41.8&17.5  \\
     M5 & \textbf{11.2} &12.7 &13.3&o.m.& 52.2 \\
     M6 &27.3& 30.2&\textbf{23.8}&o.m.&92.0 \\
               M7 &\textbf{0.32}&0.35 &0.57 & 24.2& 7.6 \\
     M8 & \textbf{0.61}&\textbf{0.61} &1.1 & 19.4 &8.0\\
     M9 &1.3&\textbf{1.1} &1.5&45.8 &8.4\\
     M10 &1.7& \textbf{1.3}&7.1 &5.6 & 6.7 \\
     M11 &2.1&\textbf{1.7} & 7.5&11.2 & 4.7  \\
      M12 &3.5 &\textbf{3.1} & 8.7&30.8 & 9.5 \\
    \noalign{\smallskip}\hline
  \end{tabular}
    \caption{RAM (GB). o.m. means ``out of memory (96GB)''.}\label{ram:M}
\end{table}

\begin{table}
\centering
  \begin{tabular}{lllll|l}
\hline\noalign{\smallskip}
     Data & AGPPA &AL\_CD & SCS &IPM & Simplex \\
    \noalign{\smallskip}\hline\noalign{\smallskip} 
    M1 & \textbf{1492} &12944 &1.72e3 & 73687 & *  \\
    M2 & \textbf{1769} &3251 & 1.50e4 &73517 & * \\
    M3 &\textbf{6382} &12475 &8.20e3 &o.m.  & * \\
     M4 & \textbf{1569} &9388 &2.51e4 &129034 & *  \\
     M5 & \textbf{3922} &24673 & * &o.m.  & *\\
     M6 &\textbf{14004} &42359 & 9.20e4 & o.m. &* \\
          M7 &\textbf{161}&* &6.85e2& 40844&*\\
     M8 &\textbf{224}& *&4.88e2 & 21015 &*\\
          M9 &\textbf{1251}& 1391& 1.03e3&62493 &*\\
     M10 &401&15915 &6.35e4&\textbf{303} &13972\\
          M11 &\textbf{489}&3532 &1.12e5&4016&*\\
           M12 &\textbf{1240}&26366 & 1.08e4&17433&*\\
    \noalign{\smallskip}\hline
  \end{tabular}
    \caption{Time for precision 1e-3 (s). * means the running time reaches 56 hours. The fourth column uses the default output format in the SCS package.}
      \label{low precision:M}
\end{table}

\begin{table}
\centering
  \begin{tabular}{lllll|l}
\hline\noalign{\smallskip}
     Data & AGPPA &AL\_CD & SCS &IPM & Simplex \\
    \noalign{\smallskip}\hline\noalign{\smallskip} 
    M1 & \textbf{8191}&14789 &3.31e4 & 92433& *\\
    M2 & \textbf{7317}&18048 & * &100646& *\\
    M3 &\textbf{33713}&67460 &1.41e5&o.m.& *\\
     M4 &\textbf{4676}&13087 & *&170938& *\\
     M5 &\textbf{10667}&38521 & * &o.m.& *\\
     M6 &\textbf{45294}&134146 & *& o.m.& *\\
          M7 &\textbf{1417}& *& 2.27e4 & 49174&*\\
     M8 &\textbf{1712}& *& 3.10e4& 25638 &*\\
     M9 & \textbf{5245}&7868 & 1.96e4& 85599&*\\
     M10 &6579& 30560 & * &\textbf{359} &13972\\
     M11 &5376& 143455&* &\textbf{4874}&*\\
     M12 &\textbf{11406}&* &* &21174&*\\
    \noalign{\smallskip}\hline
  \end{tabular}
    \caption{Time for precision 1e-5 (s). * means the running time reaches 56 hours. The fourth column uses the default output format in the SCS package.}
      \label{medium precision:M}
\end{table}

\subsection{Covering and packing LPs}
The covering LP problem is:
\begin{equation}\label{Covering}
\min~c^{\top}x~~~\st~~~Ax \ge e, x \ge 0,
\end{equation}
where $e \in \bR^m$ is the all one vector, $c \in \bR^n_{+}$ and $A \in \bR_+^{m\times n}$. Same as in~\cite{Li2019An},
we generate  large synthetic matrix $A$ by
\[ 
A = \operatorname{sprand} (m,n,\operatorname{sparsity}); A = \operatorname{round} (A);
\]
We test covering  LP problems~\eqref{Covering} with $m < n$ and note that the dual of the covering LP problem is the packing LP problem. 

Here, the covering LP problem can be transformed into the general LP form~\eqref{P}  with $m_I = m, m_E = 0, n_b = n, n_f = 0$. Then, all data sets have $m_I < n_b$, and therefore AGPPA is automatically  applied to the dual form. The data statistics are given in Table~\ref{Data statistics:C}, and the numerical results are shown in Tables~\ref{ram:C}, Table~\ref{low precision:C}, and Table~\ref{medium precision:C}.

\begin{table}
\centering
	\begin{subtable}[t]{2in}
	\centering
  \begin{tabular}{lllllllll}
    \hline\noalign{\smallskip}
   Data   &$m$&$n$&sparsity&  size\\
    \noalign{\smallskip}\hline\noalign{\smallskip} 
    C1 & 1e5&5e5&1e-3 &663 MB\\
    C2 & 1e5&1e6&1e-3 & 1.3 GB\\
    C3 &2e5&1e6&1e-3 & 3.9 GB\\
     C4 &1e5&1e7&1e-4 & 1.6 GB\\
     C5 &2e5&1e7&1e-4 & 2.9 GB\\
     C6 &5e5&1e7&1e-4 & 6.7 GB\\
    \noalign{\smallskip}\hline
  \end{tabular}
  	\end{subtable}
	\qquad
	\begin{subtable}[t]{2in}
	\centering
    \begin{tabular}{lllllllll}
    \hline\noalign{\smallskip}
    Data   &$m$&$n$&sparsity & size\\
    \noalign{\smallskip}\hline\noalign{\smallskip} 
    C7 & 1e5&5e5&1e-4 &84 MB \\
    C8 & 1e5&1e6&1e-4 & 164 MB\\
    C9 &2e5&1e6&1e-4 & 297 MB\\
     C10 &1e5&1e7&1e-5 &447 MB\\
     C11 &2e5&1e7&1e-5 &579 MB\\
     C12 &5e5&1e7&1e-5 &977 MB\\
       \noalign{\smallskip}\hline
       \end{tabular}
         \end{subtable}
    \caption{Data statistics for covering LPs}
      \label{Data statistics:C}
\end{table}

\begin{table}
\centering
  \begin{tabular}{llllll|l}
    \hline\noalign{\smallskip}
    Data   & AGPPA & AGPPAi &AL\_CD &SCS& IPM & Simplex \\
    \noalign{\smallskip}\hline\noalign{\smallskip} 
    C1 &1.4&1.5&\textbf{1.1}&1.3&37.6&2.1\\
    C2 &2.6&2.6&\textbf{2.2}&2.5&38.3&4.2\\
    C3 &5.6&5.6&\textbf{4.1}&4.3&o.m.&5.5\\
     C4 &3.2&3.5&\textbf{3.1}&8.5&20.8&7.1 \\
     C5 &6.2&6.1&\textbf{5.2}&10.1&o.m.&15.5 \\
     C6 &13.4&13.6&\textbf{11.5}&15.4&o.m.&29.6\\
          C7 &0.22&0.23&\textbf{0.17}&0.56&17.3&0.36\\
     C8 &0.41&0.40&\textbf{0.32}&0.96&17.5&0.72\\
      C9 &0.68&0.67&\textbf{0.54}&1.3&92.3&0.9\\
     C10 &1.5&1.5&\textbf{1.1}&7.0&5.1&3.6\\
     C11 &1.8&1.8&\textbf{1.3}&7.3&5.5&4.1\\
      C12 &3.1&2.5&\textbf{2.0}&8.2&60.3&5.3\\
    \noalign{\smallskip}\hline
  \end{tabular}
    \caption{RAM (GB). o.m. means ``out of memory (96GB)''.}
     \label{ram:C}
\end{table}

\begin{table}
\centering
  \begin{tabular}{llllll|l}
    \hline\noalign{\smallskip}
    Data   & AGPPA & AGPPAi &AL\_CD &SCS & IPM & Simplex  \\
    \noalign{\smallskip}\hline\noalign{\smallskip} 
    C1 &\textbf{161}&20506&440&1.40e5&76578 &31996\\
    C2 &\textbf{428}&49188&764& *&71718 &20936\\
    C3 &\textbf{569}& * &2099&*&o.m. & *\\
     C4 &\textbf{1592}&6524&3988& 8.35e4&22608 &4759 \\
     C5 &\textbf{2114}&13985&2426&*&o.m. &* \\
     C6 &\textbf{4003}&*&7836&*&o.m. & *\\
           C7 &\textbf{35}&433&167&6.35e2&22903&9034\\
      C8 &\textbf{97}&831&134&4.60e3&24131&5279\\
       C9 &\textbf{126}&2326&336&9.31e3&*&*\\
       C10 &177&228&207&5.26e3&\textbf{8}&10\\
        C11 &301&543&378&6.72e3&16&\textbf{10}\\
        C12 &921&2740&1114&2.08e4&145083&\textbf{56}\\
    \noalign{\smallskip}\hline
  \end{tabular}
    \caption{Time for precision 1e-3 (s). * means the running time reaches 24 hours for AGPPAi and 56 hours for other methods. The fifth column uses the default output format in the SCS package.}
      \label{low precision:C}
\end{table}

\begin{table}
\centering
  \begin{tabular}{llllll|l}
    \hline\noalign{\smallskip}
    Data   & AGPPA & AGPPAi&AL\_CD &SCS& IPM& Simplex \\
    \noalign{\smallskip}\hline\noalign{\smallskip} 
    C1 & \textbf{398}&53519&653& *&95332 & 31996\\
    C2 &\textbf{819}&*&2659&*&89494&20936\\
    C3 &\textbf{857}&*&3035&*&o.m.& *\\
     C4 &\textbf{7029}&38835&10696& *&32579 &4759\\
     C5 &\textbf{7261}&*&7522&*&o.m.& *\\
     C6 &\textbf{8459}&*&17224&*&o.m.& *\\
          C7 &355&7200&\textbf{327}&1.23e4&31833&9034\\
     C8 &406&8245&\textbf{378}&1.01e5&30399&5279\\
     C9 &\textbf{593}&25271&888&1.38e5&*& *\\
     C10 &247&346&914&*&\textbf{8}&10\\
     C11 &464&885&1648&*&16&\textbf{10}\\
     C12 &4425&16451&8835&*&181287&\textbf{56}\\
    \noalign{\smallskip}\hline
  \end{tabular}
    \caption{Time for precision 1e-5 (s). * means the running time reaches 24 hours for AGPPAi and 56 hours for other methods. The fifth column uses the default output format in the SCS package.}
      \label{medium precision:C}
\end{table}

\subsection{Benchmark data sets}\label{section:benchmark}

In this subsection, we consider benchmark data sets~\footnote{see benchmark data sets from:\\
$~~~~~~$https://www.netlib.org/lp/data/index.htm\\
$~~~~~~$http://www.gamsworld.org/performance/plib/credits.htm} for LP problems with the following form:
\begin{equation}\label{LP:benchmark}
\min~c^{\top}x~~~\st~~~A_{E}x = b_E, A_I x \le b_I , x \ge 0.
\end{equation}
Here,  all data sets have $m_I < n$, and therefore AGPPA is automatically  applied to the dual form. The data statistics are given in Table~\ref{Data statistics:benchmark}, and the numerical results are shown in Tables~\ref{ram:benchmark}, Table~\ref{low precision:benchmark}, and Table~\ref{medium precision:benchmark}.

\begin{table}
	\centering
  \begin{tabular}{llllllllc}
    \hline\noalign{\smallskip}
    Data   &$m_I$& $m_E$&$n$&sparsity & size\\
    \noalign{\smallskip}\hline\noalign{\smallskip} 
    OSA\_14&2337&0&52460& 0.257\% & 12 MB\\
    MAROS\_R7 &0&3136&9408&0.491\%& 4.6 MB\\
    OSA\_30 &4350&0&100024&0.138\% & 22 MB \\
     CRE\_B&4690&4958&72447& 0.037\% & 10 MB \\
      OSA\_60 &10280&0&232960& 0.058\% & 50 MB\\
      NUG\_12 &0&3192&8856& 0.136\% & 1.4 MB\\
      NUG\_15 &0&6330&22275&0.067\% & 3.5 MB \\
      NUG\_20 &0&15240&72600& 0.028\% & 12 MB  \\
      NUG\_30 &0&52260& 379350& 0.008\% & 56 MB\\
    \noalign{\smallskip}\hline
  \end{tabular}
	    \caption{Data statistics for benchmark data sets}
      \label{Data statistics:benchmark}
\end{table}

\begin{table}
\centering
  \begin{tabular}{llllll|l}
    \hline\noalign{\smallskip}
    Data   & AGPPA & AGPPAi &AL\_CD &SCS& IPM & Simplex \\
    \noalign{\smallskip}\hline\noalign{\smallskip} 
    OSA\_14&0.027&0.033&\textbf{0.020}&0.050&0.044&0.043 \\
    MAROS\_R7 &0.014&0.013&\textbf{0.011}&0.015&0.023&0.022 \\
    OSA\_30 &0.049&0.063&\textbf{0.036}&0.095&0.079&0.079 \\
     CRE\_B&0.031&0.029&\textbf{0.018}&0.068&0.056& 0.056\\
      OSA\_60 &0.104&0.124&\textbf{0.082}&0.218&0.179&0.178 \\
      NUG\_12 &\textbf{0.005}&\textbf{0.005}&\textbf{0.005}&0.011&0.014&0.018 \\
      NUG\_15 &0.011&0.010&\textbf{0.009}&0.024&0.027&0.037 \\
      NUG\_20 &0.035&0.035&\textbf{0.025}&0.071&0.073&0.179\\
      NUG\_30 &0.145&0.139&\textbf{0.100}&0.343&0.492&0.372 \\
    \noalign{\smallskip}\hline
  \end{tabular}
        \caption{RAM (GB).}
   \label{ram:benchmark}
\end{table}

\begin{table}
\centering
  \begin{tabular}{llllll|l}
    \hline\noalign{\smallskip}
    Data   & AGPPA & AGPPAi &AL\_CD &SCS & IPM & Simplex  \\
    \noalign{\smallskip}\hline\noalign{\smallskip} 
    OSA\_14&0.5&16772&21&1.92e4 &\textbf{0}&\textbf{0} \\
    MAROS\_R7 &58&54&1131&6.93e0&\textbf{0}&1 \\
    OSA\_30 &0.63&60261&32&*&\textbf{0}& \textbf{0}\\
     CRE\_B&*&*&42344&1.31e2&\textbf{1}& \textbf{1}\\
      OSA\_60 &2.1&3986&66&*&\textbf{1}&\textbf{1} \\
      NUG\_12 &435&580&25&2.01e0&\textbf{0}& 1\\
      NUG\_15 &9088&13086&85&6.28e0&\textbf{0}& 11\\
      NUG\_20 &46403&56589&380&4.18e1&\textbf{3}&303 \\
      NUG\_30 &*&*&2675&1.52e2&\textbf{58}& * \\
    \noalign{\smallskip}\hline
  \end{tabular}
    \caption{Time for precision 1e-3 (s). 0 means the running time is less than 0.5s. * means the running time reaches 24 hours. The fifth column uses the default output format in the SCS package.}
      \label{low precision:benchmark}
\end{table}

\begin{table}
\centering
  \begin{tabular}{llllll|l}
    \hline\noalign{\smallskip}
    Data   & AGPPA & AGPPAi&AL\_CD &SCS& IPM& Simplex \\
    \noalign{\smallskip}\hline\noalign{\smallskip} 
    OSA\_14&2928&16773&850&*&\textbf{0}&\textbf{0} \\
    MAROS\_R7 &83&85&1276&*&\textbf{0}&1 \\
    OSA\_30 &5937&60448&2698 &*&\textbf{0}& \textbf{0}\\
     CRE\_B&*&*&74092&3.80e4 &\textbf{1}& \textbf{1}\\
      OSA\_60 &*&*&7787&*&\textbf{1}&\textbf{1} \\
      NUG\_12 &2776&6165&71&1.73e3&\textbf{0}& 1 \\
      NUG\_15 &34328&47687&200&1.14e2&\textbf{1}& 11 \\
      NUG\_20 &*&*&1767&1.73e3&\textbf{3}&303 \\
      NUG\_30 &*&*&7457&3.12e3&\textbf{81}&* \\
    \noalign{\smallskip}\hline
  \end{tabular}
    \caption{Time for precision 1e-5 (s). 0 means the running time is less than 0.5s. * means the running time reaches 24 hours.  The fifth column uses the default output format in the SCS package.}
      \label{medium precision:benchmark}
\end{table}

\subsection{Conclusions  about the numerical results}
We draw some conclusions on the numerical results, including memory usage  based on 
Table~\ref{ram:SVM}, Table~\ref{ram:M}, Table~\ref{ram:C} and Table~\ref{ram:benchmark}, time efficiency up to low precision 1e-3 based on Table~\ref{low precision:SVM}, Table~\ref{low precision:M}, Table~\ref{low precision:C} and Table~\ref{low precision:benchmark},  and time efficiency up to medium precision 1e-5 based on Table~\ref{medium precision:SVM}, Table~\ref{medium precision:M}, Table~\ref{medium precision:C} and Table~\ref{medium precision:benchmark}.

It is easy to notice that AGPPA performs very well compared with the other solvers on the datasets in Table~\ref{L1SVM_data}, M1-M6 in Table~\ref{table:data generated sparse LP} and C1-C6 in Table~\ref{Data statistics:C}.  Meanwhile, AGPPA appears to be much slower than Gurobi on the benchmark datasets in Table~\ref{Data statistics:benchmark}.  We observe that the MPS files of the benchmark datasets in Table~\ref{Data statistics:benchmark} are of size less than 56MB and the memory usage of IPM on these datasets are all less than 0.5GB. This suggests that the matrix factorization step of IPM can be done in a very efficient way for these datasets, probably due to the relative small problem scale and the sparsity of the data.
However, for large-scale problems, by which we mean problems for which the MPS file size exceeds 2GB, we have the following observations:

\begin{enumerate}
\item  In terms of memory usage, AGPPA, AL\_CD and SCS are comparable, while IPM and simplex methods typically require high memory, and fail to produce the required solutions when they run out of memory.
\item \label{conclusion:transformation} 
AGPPAi shows significantly worse performance than AGPPA, which proves the necessity of directly dealing with the LP problem in the general form of~\eqref{P}. 
\item AGPPA v.s. AL\_CD: for both low and medium precision, we observe that AGPPA performs  better than AL\_CD. For most cases, AL\_CD
is approximately two to four times slower than  AGPPA. For the largest data set avazu in  Table~\ref{medium precision:SVM}, AGPPA shows a prominent advantage over AL\_CD since the update way of AL\_CD fails to efficiently update the proximal regularization parameters. 

\item AGPPA v.s. SCS: for many instances, SCS fails to give a solution within 56 hours.  Otherwise, it could be at least four times slower than AGPPA to reach the medium precision.
\item AGPPA v.s. IPM: IPM  encounters the memory problem, and fails on some instances. Otherwise,   IPM  could be at least ten times slower than AGPPA.

\item AGPPA v.s. simplex: the simplex method may fail to return a solution (of high-accuracy) within 56 hours. For those instances that the simplex method can solve, the running time of the simplex method (high accuracy) is approximately three to ten times higher than the time AGPPA needs (medium accuracy). 
\end{enumerate}



We conclude that AGPPA  provides an alternative for large-scale problems for which  IPM or simplex methods tend to be slow
or even fail to return solution due to memory shortage.
The experimental results on real and synthetic  large-scale LP problems confirm that AGPPA can provide an approximate solution of medium accuracy
in much less time than Gurobi 
when the matrix factorization step is expensive and the memory usage is tens of GB.   AGPPA also shows consistently superior performance  on  large instances than other PPA based solvers.

\section{Conclusion}\label{section:conclusion}
In this paper, we propose a new self-adaptive technique to update the proximal regularization parameters in the proximal point method, for a maximal monotone operator satisfying the bounded metric subregularity condition.   The proposed adaptive proximal point algorithm (AGPPA) is proved to have a  linear convergence rate without requiring any knowledge on the bounded metric subregularity parameter.   We apply AGPPA on a class of convex programming problems and analyze the iteration complexity bound if a linearly convergent inner solver can be applied to the subproblems. Our approach allows us to have a hybrid inner solver and thus can benefit from local fast convergence of second-order methods while keeping the same complexity bound. We illustrate the application to the LP problem and obtain a complexity bound with weaker dependence on the problem dimension and on the Hoffman constant associated with the KKT system. Finally we demonstrate the numerical efficiency of our method on various large-scale LP problems.   

\textbf{Acknowledgement} The computations were performed using research computing facilities offered by Information Technology Services, the University of Hong Kong.

\begin{appendices} 
\section{Supplementary proofs}
Denote
\[ 
Q_{\sigma\cM^{-1}T} (z) := z - \cJ_{\sigma\cM^{-1}T} (z).
\]
We present the properties of $\cJ_{\sigma \cM^{-1} T}$ and $Q_{\sigma\cM^{-1}T} $ in the following proposition, which is summarized in~\cite[Proposition 1]{Li2019An}.
\begin{proposition}[\cite{Li2019An}] \label{prop:useful}
 It holds for all positive real numbers $\sigma$  and all  self-adjoint positive definite linear operators $\cM$   that, for all $z, z'\in \cX$:
\begin{enumerate}[label=(\alph*)]\label{prop:uf} 
\item \label{first}$z = \cJ_{\sigma\cM^{-1} T}(z) + Q_{\sigma\cM^{-1} T}(z)$ and $\sigma^{-1}\cM Q_{\sigma\cM^{-1} T}(z) \in T(\cJ_{\sigma\cM^{-1} T}(z))$.
\item \label{second}$\langle \cJ_{\sigma\cM^{-1} T}(z) - \cJ_{\sigma\cM^{-1} T}(z^{\prime}) , Q_{\sigma\cM^{-1} T}(z) - Q_{\sigma\cM^{-1} T}(z^{\prime})\rangle_{\cM} \ge 0$.
\item \label{third}
$\norm{ \cJ_{\sigma\cM^{-1} T}(z) - \cJ_{\sigma\cM^{-1} T}(z^{\prime}) }_{\cM}^2 + \norm{ Q_{\sigma\cM^{-1} T}(z) - Q_{\sigma\cM^{-1} T}(z^{\prime}) }_{\cM}^2
$\\
$ \le \norm{ z- z^{\prime} }_{\cM}^2$.
\end{enumerate}
\end{proposition}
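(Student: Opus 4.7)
The statement is the standard ``firm non-expansiveness'' package for the resolvent, here phrased in the $\cM$-inner product since we view $\cM^{-1}T$ as a maximal monotone operator on $\cX$ equipped with $\langle\cdot,\cdot\rangle_{\cM}$. The plan is to unpack the definition of $\cJ_{\sigma\cM^{-1}T}$ to get (a), combine (a) with the monotonicity of the original operator $T$ to get (b), and then expand a squared $\cM$-norm to convert (b) into (c).

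For part (a), I would let $y=\cJ_{\sigma\cM^{-1}T}(z)$, so by the definition of the resolvent $z\in(I+\sigma\cM^{-1}T)(y)$, i.e.\ $z-y\in\sigma\cM^{-1}T(y)$. Applying $\cM$ and dividing by $\sigma$ gives $\sigma^{-1}\cM(z-y)\in T(y)$, which is exactly $\sigma^{-1}\cM Q_{\sigma\cM^{-1}T}(z)\in T(\cJ_{\sigma\cM^{-1}T}(z))$; the identity $z=\cJ_{\sigma\cM^{-1}T}(z)+Q_{\sigma\cM^{-1}T}(z)$ is immediate from the definition of $Q$. The only subtle point is to note that maximal monotonicity of $T$ ensures $\cJ_{\sigma\cM^{-1}T}$ is single-valued and everywhere defined (equivalently, $\cM^{-1}T$ is maximal monotone on the Hilbert space $(\cX,\langle\cdot,\cdot\rangle_{\cM})$), which is a standard consequence of Minty's theorem.

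For part (b), I would apply (a) at both $z$ and $z'$ to obtain $\sigma^{-1}\cM Q_{\sigma\cM^{-1}T}(z)\in T(\cJ_{\sigma\cM^{-1}T}(z))$ and $\sigma^{-1}\cM Q_{\sigma\cM^{-1}T}(z')\in T(\cJ_{\sigma\cM^{-1}T}(z'))$. Monotonicity of $T$ in the original inner product then gives
\[
\bigl\langle \cJ_{\sigma\cM^{-1}T}(z)-\cJ_{\sigma\cM^{-1}T}(z'),\,\sigma^{-1}\cM\bigl(Q_{\sigma\cM^{-1}T}(z)-Q_{\sigma\cM^{-1}T}(z')\bigr)\bigr\rangle\ge 0,
\]
and the left-hand side is exactly $\sigma^{-1}\langle \cJ_{\sigma\cM^{-1}T}(z)-\cJ_{\sigma\cM^{-1}T}(z'),Q_{\sigma\cM^{-1}T}(z)-Q_{\sigma\cM^{-1}T}(z')\rangle_{\cM}$ after folding $\cM$ into the weighted inner product; since $\sigma>0$, the inequality in (b) follows.

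For part (c), I would start from the decomposition in (a) applied at both points, which gives $z-z'=(\cJ_{\sigma\cM^{-1}T}(z)-\cJ_{\sigma\cM^{-1}T}(z'))+(Q_{\sigma\cM^{-1}T}(z)-Q_{\sigma\cM^{-1}T}(z'))$. Expanding $\|z-z'\|_{\cM}^{2}$ then yields
\[
\|z-z'\|_{\cM}^{2}=\|\cJ_{\sigma\cM^{-1}T}(z)-\cJ_{\sigma\cM^{-1}T}(z')\|_{\cM}^{2}+\|Q_{\sigma\cM^{-1}T}(z)-Q_{\sigma\cM^{-1}T}(z')\|_{\cM}^{2}+2C,
\]
where $C$ is the cross term shown in part (b) to be nonnegative; dropping $2C$ gives the desired inequality. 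No step here is a genuine obstacle: the only care needed is to consistently translate between monotonicity of $T$ (in the original inner product) and monotonicity of $\cM^{-1}T$ (in the $\cM$-inner product), which is the whole reason the identity $\sigma^{-1}\cM Q\in T(\cJ)$ in (a) is the right bridge between the two.
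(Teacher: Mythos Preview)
Your argument is correct and is exactly the standard route: unpack the resolvent definition for (a), use monotonicity of $T$ in the original inner product together with (a) to get (b), and expand $\norm{z-z'}_{\cM}^{2}$ via the decomposition $z-z'=(\cJ-\cJ')+(Q-Q')$ to deduce (c) from (b). The paper itself does not supply a proof of this proposition; it simply records the statement with a citation to~\cite{Li2019An} (and notes that the case $\cM=I$ is~\cite[Proposition~1]{PPA}), so there is nothing further to compare.
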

\begin{remark} If $\cM$ is an identity linear operator, i.e., $\cM = I$, then Proposition~\ref{prop:useful} reduces to~\cite[Proposition 1]{PPA}.
\end{remark}

\begin{proof}[\bf Proof of Theorem~\ref{thm:linearrate}]Consider any $z^* \in \Omega$ and define 
\begin{equation}\label{z:star}
z^{k+1}_* = \gamma  \cJ_{\sigma_k\cM^{-1} T}(z^k) + (1-\gamma) z^k,
\end{equation}
then we have
\[\begin{aligned}
&&\norm{z^{k+1} -z^*}_{\cM} \le& \norm{z_*^{k+1} - z^*}_{\cM}  + \gamma \norm{w^k -  \cJ_{\sigma_k\cM^{-1} T}(z^k)  }_{\cM}\\
&& \overset{\eqref{GPPA:stop}}{\le}& \norm{z^{k+1}_{*} - z^*}_{\cM}  + \delta_k \gamma \norm{w^k -  z^k  }_{\cM}\\
&&\overset{\eqref{GPPA:next iterate}}{=}& \norm{z^{k+1}_{*}- z^*}_{\cM}  +  \delta_k \norm{z^{k+1} -  z^k}_{\cM} \\
&&\le&\norm{z^{k+1}_{*}- z^*}_{\cM}  +  \delta_k \norm{z^{k+1} -z^*}_{\cM}  +  \delta_k \norm{z^k - z^*}_{\cM},  \\
\end{aligned}\]
which implies
\begin{equation}\label{eq:ratecombine}
\norm{z^{k+1} -z^*}_{\cM}  \le \frac{1}{1-\delta_k} \bracket{\norm{z^{k+1}_{*}- z^*}_{\cM}  +  \delta_k \norm{z^k - z^*}_{\cM}   }.
\end{equation}
Denote
\begin{equation}\label{def:mu}
\mu_k := \frac{\kappa_r }{\sqrt{\sigma_k^2 + \kappa_r^2 }}, 
\end{equation}
and
\[
\Pi_{\Omega}^{\cM}\bracket{\cJ_{\sigma_k\cM^{-1} T}\bracket{z^k}}: = \arg \min_{z \in \Omega} \norm{z - \cJ_{\sigma_k\cM^{-1} T}\bracket{z^k}}_{\cM},\]
\[\Pi_{\Omega}^{\cM}\bracket{z^k}: = \arg \min_{z \in \Omega} \norm{z - z^k}_{\cM},
\]
 then by Proposition~\ref{prop:useful}~\ref{third} and~\eqref{eq:key}, we have
\begin{equation}\label{eq:gamma0-2}
\begin{aligned}
&\norm{z^k- \Pi_{\Omega}^{\cM}\bracket{\cJ_{\sigma_k\cM^{-1} T}\bracket{z^k}}}_{\cM} \\ &\le\norm{\cJ_{\sigma_k\cM^{-1} T}\bracket{z^k}- \Pi_{\Omega}^{\cM}\bracket{\cJ_{\sigma_k\cM^{-1} T}\bracket{z^k}}}_{\cM}  +  \norm{Q_{\sigma_k\cM^{-1} T}\bracket{z^k}}_{\cM}  \\
&\le\dist_{\cM} \bracket{\cJ_{\sigma_k\cM^{-1} T}(z^k), \Omega} + \dist_{\cM} \bracket{z^k,  \Omega}\\
&\le ( \mu_k + 1) \dist_{\cM} \bracket{z^k,  \Omega}.
\end{aligned}
\end{equation}
\textbf{Case I}: $\gamma \in [1,2),$ and let 
\[
z^* =  \Pi_{\Omega}^{\cM}\bracket{\cJ_{\sigma_k\cM^{-1} T} \bracket{z^k}}.
\]
Let $a =  \cJ_{\sigma_k\cM^{-1} T}\bracket{z^k}, b = z^k, c =z^*$ for equality
\[
\langle a-c ,b-c \rangle_{\cM}  = \frac{1}{2} \bracket{\norm{a-c}_{\cM}^2  - \norm{a-b}_{\cM}^2 + \norm{b-c}^2_{\cM}}, 
\]
 then we have
\begin{equation}\label{eq0:gamma1-2}
\begin{aligned}
& \norm{z^{k+1}_* - z^*}^2_{\cM}=\norm{ \gamma \bracket{ \cJ_{\sigma_k\cM^{-1} T}(z^k)  - z^*} + \bracket{1-\gamma} \bracket{z^k-z^*}}_{\cM}^2\\
&=  \gamma^2 \dist_{\cM}^2\bracket{\cJ_{\sigma_k\cM^{-1} T}(z^k), \Omega} + (1-\gamma)^2\norm{ z^k- z^*}_{\cM}^2 \\
&+ 2 \gamma(1-\gamma) \left\langle \cJ_{\sigma_k\cM^{-1} T}(z^k)  - z^* , z^k -z^*\right\rangle_{\cM}\\
&=  \gamma  \dist_{\cM}^2\bracket{\cJ_{\sigma_k\cM^{-1} T}(z^k), \Omega}+ (1-\gamma)\norm{ z^k-z^*}_{\cM}^2 \\
&+\bracket{\gamma^2 - \gamma} \norm{z^k -  \cJ_{\sigma_k\cM^{-1} T}(z^k)}^2_{\cM}.
\end{aligned}
\end{equation}
By Proposition~\ref{prop:useful}~\ref{third}, we have
\begin{equation}\label{eq1:gamma1-2}
\begin{aligned}
&\norm{  z^k -  \cJ_{\sigma_k\cM^{-1} T}(z^k) }_{\cM}^2 \\
& \le \dist^2_{\cM}\bracket{ z^k , \Omega} - \norm{ \cJ_{\sigma_k\cM^{-1} T}(z^k) - \Pi_{\Omega}^{\cM} \bracket{z^k}  }_{\cM}^2 .
\end{aligned}\end{equation}
Plugging~\eqref{eq1:gamma1-2} into~\eqref{eq0:gamma1-2} leads to 
\[
\begin{aligned}
& \norm{z^{k+1}_* - z^*}^2_{\cM} \le  \gamma \dist_{\cM}^2\bracket{\cJ_{\sigma_k\cM^{-1} T}(z^k), \Omega} + (1-\gamma)\norm{ z^k-z^*}_{\cM}^2 \\
&+\bracket{\gamma^2 - \gamma} \bracket{\dist^2_{\cM}\bracket{ z^k , \Omega}- \norm{ \cJ_{\sigma_k\cM^{-1} T}(z^k) - \Pi_{\Omega}^{\cM} \bracket{z^k}  }_{\cM}^2 },
\end{aligned}
\]
together with 
\[
\gamma \ge 1,~\norm{ z^k-z^*} \ge \dist_{\cM} \bracket{z^k, \Omega},\]
and 
\[\norm{ \cJ_{\sigma_k\cM^{-1} T}(z^k) - \Pi_{\Omega}^{\cM} \bracket{z^k}  } \ge  \dist_{\cM} \bracket{\cJ_{\sigma_k\cM^{-1} T}(z^k), \Omega},
\]
we have
\begin{equation}\label{eq:exactrategamma1-2}
\begin{aligned} 
&\norm{z^{k+1}_* - z^*}_{\cM} \\
&\le\sqrt{ \bracket{\gamma-1}^2 \dist^2_{\cM}\bracket{ z^k , \Omega} + \bracket{2-\gamma} \gamma \dist_{\cM}^2 \bracket{ \cJ_{\sigma_k\cM^{-1} T}(z^k), \Omega}}\\
 &\overset{\eqref{eq:key}}{\le}\sqrt{1- \frac{\gamma(2-\gamma)\sigma_k^2}{ \sigma_k^2 + \kappa_r^2  }} \dist_{\cM} \bracket{z^k,  \Omega}.
 \end{aligned}
\end{equation}
Therefore, combine~\eqref{eq:ratecombine},~\eqref{eq:gamma0-2} and~\eqref{eq:exactrategamma1-2}, and we have
\begin{equation}\label{eq:rate1}
\dist_{\cM_{k+1}}\bracket{z^{k+1}, \Omega}\le \frac{\sqrt{1-\frac{\gamma(2-\gamma) \sigma_k^2}{ \sigma_k^2 + \kappa_r^2 }}+  \delta_k ( \mu_k + 1) }{1-\delta_k}\dist_{\cM} \bracket{z^k,  \Omega}.  
\end{equation}
\textbf{Case II}: $\gamma \in (0,1]$, and let
\[
z^* =  \gamma\Pi_{\Omega}^{\cM}\bracket{\cJ_{\sigma_k\cM^{-1} T}\bracket{z^k}}  + \bracket{1-\gamma}\Pi_{\Omega}^{\cM}\bracket{z^k},
\]
then $z^* \in \Omega$. With Lemma~\ref{lemma:key} and $\gamma \in (0,1]$, we have
\[
\begin{aligned}
& \norm{z^{k+1}_* - z^*}^2_{\cM}\\
&=\norm{ \gamma \bracket{ \cJ_{\sigma_k\cM^{-1} T}(z^k)  - \Pi_{\Omega}^{\cM}\bracket{\cJ_{\sigma_k\cM^{-1} T}\bracket{z^k}}} + \bracket{1-\gamma} \bracket{z^k- \Pi_{\Omega}^{\cM}\bracket{z^k} }}_{\cM}^2\\
&=  \gamma^2 \norm{ \cJ_{\sigma_k\cM^{-1} T}(z^k) - \Pi_{\Omega}^{\cM}\bracket{\cJ_{\sigma_k\cM^{-1} T}\bracket{z^k}}}^2_{\cM} + (1-\gamma)^2\norm{ z^k-  \Pi_{\Omega}^{\cM}\bracket{z^k}}_{\cM}^2 \\
&+ 2 \gamma(1-\gamma) \left\langle \cJ_{\sigma_k\cM^{-1} T}(z^k)  - \Pi_{\Omega}^{\cM}\bracket{\cJ_{\sigma_k\cM^{-1} T}\bracket{z^k}} , z^k - \Pi_{\Omega}^{\cM}\bracket{z^k} \right\rangle_{\cM}\\
&= (1-\gamma)\norm{ z^k-  \Pi_{\Omega}^{\cM}\bracket{z^k}}_{\cM}^2  + \gamma \norm{\cJ_{\sigma_k\cM^{-1} T}(z^k)- \Pi_{\Omega}^{\cM}\bracket{\cJ_{\sigma_k\cM^{-1} T}\bracket{z^k}}}_{\cM}^2   \\
&- \gamma(1-\gamma) \norm{ \cJ_{\sigma_k\cM^{-1} T}(z^k)  - \Pi_{\Omega}^{\cM}\bracket{\cJ_{\sigma_k\cM^{-1} T}\bracket{z^k}} - \bracket{z^k - \Pi_{\Omega}^{\cM}\bracket{z^k}}}_{\cM}^2\\
&\le\bracket{1-\gamma}\dist_{\cM}^2 \bracket{z^k,  \Omega} + \gamma \dist_{\cM}^2 \bracket{\cJ_{\sigma_k\cM^{-1} T}(z^k), \Omega} \\
&\label{eq:exactrategamma0-1} \overset{\eqref{eq:key}}{\le}\bracket{1- \frac{\gamma\sigma_k^2}{ \sigma_k^2 + \kappa_r^2 }} \dist_{\cM}^2 \bracket{z^k,  \Omega},
\end{aligned}
\]
which implies
\begin{equation}\label{eq:exactrategamma0-1}
\norm{z^{k+1}_* - z^*}_{\cM} \le \sqrt{1-\frac{ \gamma\sigma_k^2}{ \sigma_k^2 + \kappa_r^2 }} \dist_{\cM} \bracket{z^k,  \Omega}.
\end{equation}
In addition, with~\eqref{eq:gamma0-2} , we have
\begin{equation}\label{eq:gamma0-1}
\begin{aligned}
&\norm{z^k - z^*}_{\cM}  &\le& \gamma\norm{z^k- \Pi_{\Omega}^{\cM}\bracket{\cJ_{\sigma_k\cM^{-1} T}\bracket{z^k}}}_{\cM}   + (1-\gamma) \dist_{\cM} \bracket{z^k,  \Omega}\\
&&\le& (\gamma \mu_k + 1) \dist_{\cM} \bracket{z^k,  \Omega},
\end{aligned}
\end{equation}
Combining~\eqref{eq:ratecombine},~\eqref{eq:exactrategamma0-1} and~\eqref{eq:gamma0-1}, we have
\begin{equation}\label{eq:rate2}
\dist_{\cM_{k+1}}\bracket{z^{k+1},\Omega}  \le \frac{ \sqrt{1-\frac{ \gamma\sigma_k^2}{ \sigma_k^2 + \kappa_r^2 }} + \delta_k \bracket{  \gamma \mu_k +1} }{1-\delta_k}\dist_{\cM} \bracket{z^k,  \Omega}.
\end{equation}
The linear rate is proved by~\eqref{eq:rate1} and~\eqref{eq:rate2} with~\eqref{def:mu}.
\end{proof}

\begin{proof}[\textbf{Proof of Proposition~\ref{prop:leftright}}]
By Proposition~\ref{prop:useful}~\ref{third}, we have
\begin{equation}\label{finite:eq3}
\norm{Q_{\sigma_k\cM^{-1}T} (z^k) }_{\cM} \le\dist_{\cM} \bracket{z^{k},  \Omega}.
\end{equation}
Then
\[
\begin{aligned}
&\norm{z^{k+1} - z^k}_{\cM}& = & \gamma \norm{w^k -  \cJ_{\sigma_k\cM^{-1}T} (z^k) + \cJ_{\sigma_k\cM^{-1}T} (z^k) - z^k}_{\cM}\\
&&\le&  \gamma \norm{w^{k} - \cJ_{\sigma_k\cM^{-1}T} (z^k) }_{\cM} +\gamma\norm{ Q_{\sigma_k\cM^{-1}T} (z^k) }_{\cM}\\
&&\overset{\eqref{GPPA:stop}}{\le}& \delta \gamma \norm{w^{k} - z^k}_{\cM} + \gamma \norm{Q_{\sigma_k\cM^{-1}T} (z^k) }_{\cM}\\
&&\overset{\eqref{finite:eq3}}{\le}& \delta \norm{z^{k+1} - z^k}_{\cM} + \gamma\dist_{\cM} \bracket{z^{k},  \Omega},
\end{aligned}
\]
leading to
\[
\frac{1-\delta}{\gamma}\norm{z^{k+1} - z^k}_{\cM} \le \dist_{\cM} \bracket{z^{k},  \Omega}.
\]
Thus, the left part is proved. Now we aim to prove the right part. Since
\[\begin{aligned}
 &\norm{z^{k+1} - z^k}_{\cM}  &=& \gamma \norm{w^{k} - \cJ_{\sigma_k\cM^{-1}T} (z^k) + \cJ_{\sigma_k\cM^{-1}T} (z^k) -  z^k}_{\cM} \\
 &&\ge&\gamma \norm{Q_{\sigma_k\cM^{-1}T} (z^k) }_{\cM}  - \gamma  \norm{ w^{k} - \cJ_{\sigma_k\cM^{-1}T} (z^k) }_{\cM}  \\
 &&\overset{\eqref{GPPA:stop}}{\ge}&  \gamma\norm{Q_{\sigma_k\cM^{-1}T} (z^k)  }_{\cM}   - \delta \gamma \norm{w^{k} - z^k}_{\cM} \\
 && = & \gamma\norm{Q_{\sigma_k\cM^{-1}T} (z^k)  }_{\cM}   - \delta \norm{z^{k+1} - z^k}_{\cM}  ,
\end{aligned}\]
we have
\begin{equation}\label{lemma3:eq4} 
\norm{ Q_{\sigma_k\cM^{-1}T} (z^k) }_{\cM} \le \frac{1+\delta}{\gamma} \norm{z^{k+1} - z^k}_{\cM}.
\end{equation}
In addition, based on Lemma~\ref{lemma:key}, we have
\[
\begin{aligned}
&\dist_{\cM} \bracket{z^{k},  \Omega}&\le& \norm{\cJ_{\sigma_k\cM^{-1}T} (z^k) -  z^k  }_{\cM}+ \dist_{\cM}\bracket{ \cJ_{\sigma_k\cM^{-1}T} (z^k) , \Omega} \\
&&\overset{\eqref{eq:key}}{\le}& \norm{Q_{\sigma_k\cM^{-1}T} (z^k)  }_{\cM}+  \frac{\kappa_r }{\sqrt{\sigma_k^2 + \kappa_r^2 }} \dist_{\cM}\bracket{z^k, \Omega},
\end{aligned}
\]
which implies 
\begin{equation}\label{lemma3:eq7}
\bracket{1-  \frac{\kappa_r }{\sqrt{\sigma_k^2 + \kappa_r^2 }}}\dist_{\cM}\bracket{z^k, \Omega} \le\norm{ Q_{\sigma_k\cM^{-1}T} (z^k)  }_{\cM}.
\end{equation}
Combining~\eqref{lemma3:eq7} and~\eqref{lemma3:eq4}, we have
\[
\dist_{\cM}\bracket{z^k, \Omega} \le \frac{1 + \delta}{\gamma \bracket{1-  \frac{\kappa_r }{\sqrt{\sigma_k^2 + \kappa_r^2 }}}}\norm{z^{k+1} - z^k}_{\cM},
\]
and thus the right part is proved. 
\end{proof}

\begin{proof}[\textbf{Proof of Propostion~\ref{prop:d}}]
If~\eqref{eq:GPPAlinearrate3} holds and 
\[k\ge 
\log_{\frac{1}{\rho}}\bracket{\frac{R (\sigma_k)\zeta \dist_{\cM}\bracket{z^0,\Omega}}{\lambda_{\min}(\cM) \epsilon}},\]
 then we have 
\begin{equation}\label{eq:spleepy1}
\begin{aligned}
&\norm{z^{k+1} - z^k}_{\cM} &\le &  \frac{C\lambda_{\min}(\cM) \epsilon}{  R (\sigma_k)\zeta \dist_{\cM}\bracket{z^0,\Omega}}
 \norm{z^{1} - z^0}_{\cM}\\
 && \le& \frac{C\lambda_{\min}(\cM) \epsilon}{  R (\sigma_k)\zeta \dist_{\cM}\bracket{z^0,\Omega}} \frac{\gamma}{1-\delta}  \dist_{\cM} \bracket{z^{0},  \Omega}\\
 && \le&  \frac{  \gamma \bracket{1-  \sqrt{ \frac{\kappa_r^2 }{\sigma^2_k + \kappa_r^2 }}} \lambda_{\min}(\cM) \epsilon}{ \bracket{1 + \delta} \zeta }, \\
\end{aligned}
\end{equation}
where the second inequality derives from the first inequality in Proposition~\ref{prop:leftright} and the last inequality is from the definition of $R(\sigma)$ as in~\eqref{eq:R}. In addition,  
\begin{equation}\label{eq:spleepy2}
\cE(z^k) \overset{\eqref{a:cEl}}\le \zeta \dist(z^k, \Omega) \le \frac{\zeta}{\lambda_{\min} (\cM)} \dist_{\cM}(z^k, \Omega).
\end{equation}
Then we have 
\[\begin{aligned}
&\cE(z^k) & \overset{\eqref{eq:spleepy2}} \le&  \frac{\zeta}{\lambda_{\min} (\cM)} \dist_{\cM}(z^k, \Omega) \\
&&\le& \frac{\zeta}{\lambda_{\min} (\cM)} \frac{1 + \delta}{\gamma \left (1-  \sqrt{\frac{\kappa_r^2}{\sigma_k^2 + \kappa_r^2 }}\right)}\norm{z^{k+1} - z^k}_{\cM}\\
&&\overset{\eqref{eq:spleepy1}} \le& \epsilon,
\end{aligned}\]
where  the second inequality derives from the second inequality in Proposition~\ref{prop:leftright}.
\end{proof}

\begin{proof}[\textbf{Proof of Corollary~\ref{cor:wrd}}] 
First, we note that 
\[
\frac{1}{1- \sqrt{\frac{\kappa_r^2}{\sigma_0^2 + \kappa_r^2}}} \le 2 \bracket{1+ \frac{\kappa_r^2}{\sigma_0^2}},
\]
from which we deduce
\[
\bar R = O(\zeta r \kappa_r^2).
\]
It follows that 
\begin{equation}\label{forget:eq1}
\left\lceil \max \bracket{\log_{\frac{1}{\rho}} \bracket{\frac{\bar R}{\epsilon}}, 0} + 1  \right\rceil = O \bracket{ \ln \frac{\zeta r \kappa_r}{\epsilon}}.
\end{equation}
It is also easy to see that
\begin{equation}\label{forget:eq2}
\bar s = \left\lceil \max \bracket{ \log_{\varrho_\sigma} \bracket{\frac{\kappa_r \alpha}{\sigma_0},0}}  \right\rceil = O \bracket{\ln \kappa_r}.
\end{equation}
Hence,
\[\begin{aligned}
&- \ln \bar \eta = - \ln \eta_0 - \bar s \ln \varrho_{\eta}  + \varsigma \ln \left\lceil \max \bracket{\log_{\frac{1}{\rho}} \bracket{\frac{\bar R}{\epsilon}}, 0} + 1  \right\rceil \\
& \overset{\eqref{forget:eq1} + \eqref{forget:eq2}} =  O \bracket{\ln \kappa_r} + O \bracket{ \ln\ln \frac{\zeta r \kappa_r}{\epsilon}}
\end{aligned},
\]
and
\[
\bar N = \bar s  \left\lceil \max \bracket{\log_{\frac{1}{\rho}} \bracket{\frac{\bar R}{\epsilon}}, 0} + 1  \right\rceil \overset{\eqref{forget:eq1} + \eqref{forget:eq2}} = O \bracket{\ln \kappa_r \ln \frac{\zeta r \kappa_r}{\epsilon}}.
\]

\end{proof}

\begin{proof}[\textbf{Proof of Corollary~\ref{cor:complexity bound of AGPPA}}]
First,  ignoring problem-independent constants, from definitions in~\eqref{def:zeta12} we get
\begin{align*}
&\ln \bar \zeta_1 = O \bracket{\ln \frac{r}{p} + \ln \frac{1}{\bar \eta}+ \ln  (L_0+a)+\ln\bar \sigma},  \\
&\ln \bar \zeta_2 = O\bracket{\ln  (L_0+a)+\ln\bar \sigma + \ln \frac{1}{p}},
\end{align*}
 and therefore
\begin{align*}
&\left\lceil \ln \bar N + \max \bracket{ \max (\ln \bar \zeta_1, \ln \bar \zeta_2)   , 0}  \right\rceil 
\\&=O\left (
\ln \bar N+\ln\frac{r}{p}+\ln\frac{1}{\bar\eta}+\ln  (L_0+a)+\ln\bar \sigma
\right).
\end{align*}
Hence, the bound~\eqref{a:dfer} is of order
$$
O\left ( (\vartheta_1\bar  \sigma^{\iota} + \vartheta_2 \bar  \sigma^{\iota/2} +  \Upsilon)
 \bar N\left (
\ln \bar N+\ln\frac{r}{p}+\ln\frac{1}{\bar\eta} + \ln  (L_0+a)+\ln\bar \sigma\right)\right).
$$
By~\eqref{a:sigma} we have $$O (\bar \sigma^{\iota})=O (  (\alpha \varrho_{\sigma}\kappa_r)^\iota ),$$
with $\alpha$ and $\varrho_{\sigma}$ being user-defined parameters.   
Then the result  directly follows  Corollary~\ref{cor:wrd}. 
\end{proof}

\section{Acceleration with projected Semismooth Newton method}\label{appendix:PNCG}
The inner problem~\eqref{LP:fphi} can be reformulated as:
\[
\min_{x_{b} \ge 0} \left\{ \tilde f (x) \equiv  c^{\top}x+ \frac{1}{2\sigma} \norm{\left[\bar \lambda +  \sigma \bracket{Ax-b}  \right]^{m_I}_{+}}^2 +  \frac{1}{2\sigma } \norm{ x -\bar x }^2 \right\}.
\]
Note that $\tilde f$ is strongly convex and continuously differentiable over $\bR^n$ with
\begin{equation}\label{PSSN:gradient}
\nabla \tilde f (x) = c + A^\top\left[\bar \lambda +  \sigma \bracket{Ax-b}  \right]^{m_I}_{+}  + \frac{1}{\sigma}  (x -\bar x).
\end{equation}
Since the  projection function $[\cdot]_{+}^{m_I}$ is a Lipschitz continuous piecewise affine function, $\nabla \tilde f (x)$ is strongly  semismooth~\cite{facchinei2007finite}. 

Notice that $\nabla \tilde f$ is a semismooth function~\cite[Definition 3.5]{Li2016A}. Then the   generalized Hessian of $\tilde f$  can be expressed as
\begin{equation}\label{def:Hessian}
\nabla^2\tilde f (x)  = \sigma A^{\top} D \bracket{ \left[\bar \lambda +  \sigma \bracket{Ax-b}  \right]^{m_I}_{+} } A +\frac{1}{\sigma}
I_{n} , 
\end{equation}
where $\cI_n \in \bR^{n \times n}$ is an identity matrix, and  $D (\cdot): \bR^{m} \rightarrow \mathbb{R}^{m \times m}$ maps to a diagonal matrix with 
\[
 D_{ii} (w) =  \left\{
 \begin{aligned}
 &0,&&\mathrm{if}~i \le m_I~\mathrm{and}~w_i < 0, \\
 &1, &&\mathrm{otherwise}. 
 \end{aligned}
 \right.
 \]
Projected Semismooth Newton (PSSN) method uses the active strategy. Concretely, it only updates the coordinates of $x$ from the  active set  defined by:
\begin{equation}\label{def:activeset}
\mathcal{A} :=  [n] \backslash \left \{i \in [n_b] \middle| x_i = 0, \frac{\partial \tilde f (x)}{ \partial x_i}   > 0   \right\}.
\end{equation}
Use $[\cdot]_{\mathcal{A}}$ for vector with coordinates to be retained from $\mathcal{A}$  and  to be zero otherwise, or for matrix with the entries to be retained for both rows and columns from  $\mathcal{A}$ and  to be zero otherwise. Then, we show the process of PSSN as Algorithm~\ref{alg:PNstep}.
\renewcommand{\thealgorithm}{4}
\begin{algorithm}
	\caption{}
	\textbf{Parameters:} $\mu \in \bracket{0,1/2}, \nu \in \bracket{0,1}, \varrho \in  (0,1), \tau \in  (0,1]$
	\begin{algorithmic}
	\State 1. Compute the active set as~\eqref{def:activeset}.
	\State 2. Compute $\nabla_{\cA} \tilde f (x)$ and $\nabla^2_{\cA} \tilde f (x)$ based on~\eqref{PSSN:gradient} and~\eqref{def:Hessian} respectively.
	\State 3. 
	Solve the linear system
\begin{equation}\label{linear system}
  \nabla^2_{\mathcal{A}} \tilde f (x) y = -\nabla_{\mathcal{A}} \tilde f (x)
\end{equation}
	~~~~exactly or by the conjugate gradient  (CG) algorithm to find $y$ such that 
	\begin{equation}\label{alg1:stop}
	\norm{\nabla^2_{\mathcal{A}} \tilde f (x) y + \nabla_{\cA}\tilde f (x)}  \le \min \left\{ \nu \norm{ \nabla_{\cA}\tilde f (x)}, \norm{ \nabla_{\cA}\tilde f (x)}^{1+\tau} \right\}.
	\end{equation}
	\State 4. (Line search) Compute $\varrho^{j} $ with $j$ to be the first nonnegative integer satisfying
\[	
  \tilde f ([ x + \varrho^{j} y_{\mathcal{A}} ]_{+}^{n_b})  \le  \tilde f (x) + \mu \varrho^j \langle \nabla_{\mathcal{A}} \tilde f  (x), y_{\mathcal{A}} \rangle.
  \]
	\State 5.  Output $x^{+} = [x  + \varrho^j y_{\cA}]_{+}^{n_b}$.
	\end{algorithmic}
	\label{alg:PNstep}
\end{algorithm} 

Different from PN-CG in~\cite[Section 3.4]{Yen2015Sparse}, we use stopping criterion~\eqref{alg1:stop} for linear system~\eqref{linear system} instead of
\begin{equation}\label{stop:PNCG}
\norm{\nabla^2_{\mathcal{A}} \tilde f (x) y  +  \nabla_{\cA}\tilde f (x)}  \le  \nu \norm{ \nabla_{\cA}\tilde f (x)}.
\end{equation}
With this modification, we find that when $n_b = 0$, then Algorithm~\ref{alg:PNstep} reduces to semismooth Newton (SSN) method in~\cite{Li2019An} with slight modification, i.e., we apply stopping criterion~\eqref{alg1:stop} for linear system~\eqref{linear system} instead of  
\begin{equation}\label{stop:SunPPA}
\norm{\nabla^2_{\mathcal{A}} \tilde f (x) y + \nabla_{\cA}\tilde f (x)}  \le \min \left\{ \nu, \norm{ \nabla_{\cA}\tilde f (x)}^{1+\tau} \right\}.
\end{equation}
Note that when $n_b = 0$, same as the classical SSN method, it is trivial to check that Algorithm~\ref{alg:PNstep} still globally converges and keeps the local superlinear convergence, which shows the advantage of our  stopping criterion over~\eqref{stop:PNCG}. 
 In addition, the form~\eqref{alg1:stop}  is  more appropriate for the choice of parameter $\nu$ than~\eqref{stop:SunPPA}. Thus, we have an adequate motivation to study the convergence of  Algorithm~\ref{alg:PNstep} for the case where $n_b > 0$  and we will do further research in the subsequent paper. Again, we emphasize that there is no specific requirement on the theoretical convergence of Algorithm~\ref{alg:PNstep} when it is used in Algorithm~\ref{cQ_cAcG}.

\end{appendices}


%
%

\bibliographystyle{spmpsci}      
\bibliography{template}   

\end{document}